\numberwithin{equation}{section}
\newtheorem{theorem}{Theorem}[section]
\newtheorem{definition}[theorem]{Definition}
\newtheorem{proposition}[theorem]{Proposition}
\newtheorem{lemma}[theorem]{Lemma}
\newtheorem{corollary}[theorem]{Corollary}
\newtheorem{remark}[theorem]{Remark}
\newcommand{\rad}{{\text{\upshape rad}}}
\def\e{{\varepsilon}}
\def\d{\delta}
\def\L{{\Lambda}}
\def\l{{\lambda}}
\def\a{{\alpha}}
\def\b{{\beta}}
\newcommand{\R}{\mathbb{R}}
\newcommand{\loc}{\mathop{\mathrm{loc}}}
\newcommand{\jcal}{{\mathcal J}}
\newcommand{\n}{\mathop{N}}
\newif\ifcomment \commentfalse
\def\commentON{\commenttrue}
\long\outer\def\BC#1\EC{\ifcomment \sloppy \par \# \ldots\dotfill
{\em #1} \dotfill \# \par \fi } \commentON
\newcommand{\remove}[1]{}
\def\sideremark#1{\ifvmode\leavevmode\fi\vadjust{\vbox to0pt{\vss
 \hbox to 0pt{\hskip\hsize\hskip1em
 \vbox{\hsize2.1cm\tiny\raggedright\pretolerance10000
  \noindent #1\hfill}\hss}\vbox to15pt{\vfil}\vss}}}%
\definecolor{cadmiumgreen}{rgb}{0.0, 0.42, 0.24}
\definecolor{darkgreen}{rgb}{0.0, 0.5, 0.2}
\definecolor{purple}{rgb}{0.5, 0.0, 0.5}
\newcommand{\AL}{\color{purple}}
\title[Asymptotic profile and Morse index]{Asymptotic profile and Morse index of nodal radial solutions to the H\'enon problem}
\author[A.~L.~Amadori, F.~Gladiali]{Anna Lisa Amadori$^\dag$,  Francesca Gladiali$^\ddag$}
\thanks{This work was supported by Gruppo Nazionale per l'Analisi Matematica, la Probabilit\`a e le loro Applicazioni (GNAMPA) of the Istituto Nazionale di Alta Matematica (INdAM). {The second author is supported by Prin-2015KB9WPT}}
\date{\today}
\address{$\dag$ Dipartimento di Scienze e Tecnologie, Universit\`a di Napoli ``Parthenope", Centro Direzionale di Napoli, Isola C4, 80143 Napoli, Italy. \texttt{annalisa.amadori@uniparthenope.it}}
\address{$\ddag$ Dipartimento di Chimica e Farmacia, Universit\`a di Sassari, via Piandanna 4, 07100 Sassari, Italy. \texttt{fgladiali@uniss.it}}
\begin{document}
\maketitle 
 \begin{abstract}
 	We compute the Morse index of nodal radial solutions to the H\'enon problem
 	\[\left\{\begin{array}{ll}
 	-\Delta u = |x|^{\alpha}|u|^{p-1} u \qquad & \text{ in } B, \\
 	u= 0 & \text{ on } \partial B,
 	\end{array} \right.
 	\]
 	where $B$ stands for the unit ball in $\R^N$ in dimension $N\ge 3$, $\a>0$ and $p$ is near at the threshold exponent for existence of solutions $p_{\a}=\frac{N+2+2\a}{N-2}$, obtaining that
 	\begin{align*} 
 	m(u_p) & = m \sum\limits_{j=0}^{1+\left[{\alpha}/{2}\right]} N_j  \quad & \mbox{ if $\a$ is not an even integer, or} \\
 	m(u_p)& =  m\sum\limits_{j=0}^{ \a /2} N_j + (m-1) N_{1+\a/ 2}  & \mbox{ 	if $\alpha$  is an even number.}
 	\end{align*}	
	{Here $N_j$ denotes the multiplicity of the spherical harmonics of order $j$.}\\
 	The computation builds on a characterization of the Morse index by means of a one dimensional singular eigenvalue problem, and is carried out by a detailed picture of the asymptotic behavior of both the solution and the singular eigenvalues and eigenfunctions.
 	In particular it is shown that nodal radial solutions have multiple blow-up at the origin, where each node converges (up to a suitable rescaling) to the bubble shaped solution of a limit problem.  
 	As side outcome we see that solutions are nondegenerate for $p$ near at $p_{\a}$, and we give an existence result in perturbed balls.
 	\end{abstract}

\tableofcontents

\section{Introduction}\label{sec:intro}
In this paper we continue the project started with \cite{AG-sez2} and use a singular eigenvalue problem to 
compute the Morse index of nodal radial solutions to semilinear equations. In particular we focus here on the problem 
\begin{equation}\label{H}
\left\{\begin{array}{ll}
-\Delta u = |x|^{\alpha}|u|^{p-1} u \qquad & \text{ in } B, \\
u= 0 & \text{ on } \partial B,
\end{array} \right.
\end{equation}
where $\a\geq 0$, $B$ stands for the unit ball in $\R^N$ in dimension $N\ge 3$, and $p>1$. 
When $\a>0$ problem \eqref{H} is known as the H\'enon problem, since it has been introduced by H\'enon in \cite{H} in the study of stellar clusters in radially symmetric settings, in 1973. Later on Ni, in the celebrated paper \cite{Ni}, proved the existence of a critical exponent related with the parameter $\a$, that we denote hereafter by 
\begin{equation}\label{p-alpha} 
p_{\a}= \frac{N+2+2\a}{N-2}
\end{equation}
which gives the threshold between existence and nonexistence of solutions. Using the fact that $H^1_{0,\rad}(B):=\{u\in H^1_0(B) : \text{ $u$ is radial}\}$ is compactly embedded in $L^{p+1}(B, |x|^\a dx)$ for every $1<p<p_\a$, Ni proved that \eqref{H} admits a positive radial solution, which is classical. The existence of radial solutions can be then extended to the case of nodal solutions with an arbitrary number of zeros (nodes) by means of a procedure introduced in \cite{BW93} and using again the compactness of the immersion of $H^1_{0,\rad}$ into $L^{p+1}$ as for the case of positive solutions. It is also possible to apply a uniqueness result of \cite{NN} to have that for any integer $m\ge 1$ there exists only a couple of radial solutions to \eqref{H} which have exactly $m$ nodal zones,  meaning that the set $\{x\in B \ : u(x)\neq 0\}$ has exactly $m$ connected components; they are one the opposite of the other and classical solutions (see, for instance, \cite[Proposition 5.14]{AG-sez2}). 

Moreover, a classical Pohozaev argument shows that the H\'enon problem \eqref{H} does not admit solutions when it is settled in a smooth bounded domain $\Omega$ which is starshaped with respect to the origin and $p\geq p_\a$. Then $p_\a$ exhibits the same role of the critical exponent $p^*=\frac{N+2}{N-2}$ for the Lane-Emden problem
\begin{equation}\label{LE}\left\{\begin{array}{ll}
-\Delta u = |u|^{p-1} u \qquad & \text{ in } B, \\
u= 0 & \text{ on } \partial B,
\end{array} \right.
\end{equation}
which corresponds to \eqref{H} in the case of $\a=0$. As we will see the relations between H\'enon and Lane-Emden problems are much  deeper. Indeed radial solutions to \eqref{H}  with $\a>0$ can be viewed as radially extended solutions to \eqref{LE} in a sense which will be clarified in Section \ref{sec:da-u-a-v}.\\
The H\'enon problem attracted the attention of many mathematicians since the paper \cite{SSW} in which the authors proved that the ground state solutions to \eqref{H}, namely solutions which minimizes 
the Energy functional
\[\mathcal E(u)=\frac 12 \int _B |\nabla v|^2-\frac 1{p+1}\int_B |x|^\a |u|^{p+1}\]
on the Nehari manifold
\[\mathcal N=\{v\in H^1_0(B)\ : \ \int_B|\nabla v|^2=\int_B |x|^\a |v|^{p+1}\}\]
for $1<p<p^*$ are nonradial provided $\a>0$ is sufficiently large. Nevertheless ground state solutions to \eqref{H} maintain a residual symmetry called foliated Schwartz symmetry, which appears in other similar contests in which the symmetry result of Gidas, Ni and Nirenberg in \cite{GNN} does not hold, namely in the case of annular domains or for nodal solutions. \\
Let us recall that the Morse index of a solution  $u$ to \eqref{H} is the maximal dimension of a subspace $X\subseteq H^1_0(B)$ where the quadratic form 
\[Q_u(\psi):=\int_B|\nabla \psi|^2-p|x|^\a|u|^{p-1}\psi^2\, dx\]
is negative defined.  The quadratic form $Q_u$ is associated with the linearized operator  in $B$ with Dirichlet boundary conditions 
\[L_u(\psi):=-\Delta \psi-p |x|^\a|u|^{p-1}\psi . \]
 Of course the Morse index can be computed counting (with multiplicity) the negative eigenvalues of $L_u$ in $H^1_0(B)$, but also some negative singular eigenvalues. This equivalence and the characterization of Morse index in terms of the singular eigenvalues of $L_u$ is given in details in \cite{AG-sez2} and will be essential for our aims. 
\\
It is well known that ground state solutions have Morse index one since they can be found as minima on the Nehari manifold, which has dimension one.   Then the result in \cite{SSW} tells that radial positive solutions to \eqref{H} can have Morse index greater than $1$, when $\a$ is large enough.
\\  
Starting from this consideration, in \cite{AG14} we computed the Morse index of radial positive solutions to \eqref{H} 
 showing that it converges to the value $1+N$ when $p\to p_\a$ and to the value $1$ as $p\to 1$, 
and we proved a first bifurcation result from the positive solution of the H\'enon problem which is, in our opinion, responsible of the symmetry breaking of \eqref{H}.
In this last paper a technical assumption, namely that $0<\a\leq 1$, is required  to deal with the linearized operator and compute the asymptotic Morse index of radial positive solutions.
	This assumption is removed here where, taking advantage from the analysis in \cite{AG-sez2} and
	using 
	a  singular eigenvalue problem associated to the linearized operator, the computation of the Morse index is performed for any value of $\a$. Nevertheless the result in \cite{AG14} put evidence on the fact that the symmetry breaking phenomenon pointed out in \cite{SSW}  is not related to large values of $\a$, but still holds when $0<\a\leq 1$.
\\
Later it has been proved in \cite{LWZ} that  the Morse index of any radial solution to \eqref{H} goes to $\infty$ as $\a\to \infty$, showing again the symmetry breaking of the  ground state solutions, for large values of $\a$. Their result has implications also concerning nodal ground state solutions, namely minima for $\mathcal E(u)$ on the nodal Nehari manifold
	\[ \begin{array}{rl}\mathcal N_{nod}=\Big\{v\in H^1_0(B) :&   v^+\neq 0, \ \int_B|\nabla v^+|^2=\int_B |x|^\a |v^+|^{p+1}, \\ &  v^-\neq 0 , \ \int_B|\nabla v^-|^2=\int_B |x|^\a |v^-|^{p+1}\Big\} .\end{array} \]
	Here $s^+$ ($s^-$) stands for  the positive (negative) part of $s$.
	As it is known by  \cite{BW} that they have Morse index $2$, the estimate in \cite{LWZ} implies that the symmetry breaking phenomenon concerns also  nodal ground state solutions. A similar consideration appears also in \cite{AG-sez2} as a consequence of some estimates on Morse index of radial nodal solutions, but only in the case of solutions which changes sign. 
	 
	The fact that the Morse index of any radial solutions to \eqref{H}  diverges as $\a\to \infty$ is a clue that the symmetry breaking phenomenon is not related with a nonradial solution whose energy is less than the radial one, but with infinitely many nonradial (nodal) solutions that should arise by bifurcation. 
	Indeed \cite{WY} found infinitely many positive multipeak  solutions, with arbitrarily large energy, when $p=p^*$ and in \cite{FN17} infinitely many positive solutions bifurcating from the radial positive solution when $p$ is near $p_\a$ are constructed.

In any case the exact Morse index of radial solutions to \eqref{H}, depending on the parameters $p$ and $\a$ and on the number of nodal zones $m$, is still unknown. To the authors' knowledge the unique results in this direction  are the computations in \cite{AG-sez2}, where a lower bound on the Morse index is presented and it is proved that the radial Morse index is equal to the number of nodal zones, namely the linearized operator $L_u$ has exactly $m$ negative eigenvalues whose related eigenfunction is radial.
\\ 
Beyond the symmetry breaking the interest of the mathematicians on the H\'enon problem \eqref{H} is due to the richness of its solutions set, which is completely different from the Lane Emden case.  For instance \cite{PS} produces multipeak solutions in the slightly subcritical range, by the Lyapunov-Schmidt reduction method. Moreover solutions appear also in a  critical and  supercritical range, namely when  $p=p^*$ or  $p>p^*$, and of course $p<p_\a$.  Concerning existence of nonradial solutions in the critical case we quote here \cite{S}  and the already mentioned \cite{WY}. Coming to the supercritical range, \cite{BS} produces nonradial positive solutions using minimization  in suitable symmetric spaces and \cite{C} produces positive solutions on perturbed balls for generic values of $p$, by a perturbation argument. Next for all values of the 
	exponent $p$ near at the threshold  $p_{\a}$, and any domain containing the origin, we mention \cite{GG} concerning existence of positive solutions and  also the papers \cite{CD17}  and \cite{CLP18}, where nodal bubble tower solutions are constructed  by a Lyapunov-Schmidt reduction method when $\a$ is not an even integer, respectively for $\a>0$ and $\a>-2$.
 
\

In this paper we want to fill the gap on the exact value of the Morse index of radial solutions to \eqref{H},  and, considering  $\a\geq 0$ as a fixed parameter we compute the Morse index of any radial solution to \eqref{H} in a left neighborhood of the critical exponent $p_\a$. To state our main result we denote by 
$\left[\frac{\alpha}{2}\right] =\max\left\{k\in {\mathbb N} \, : \, k\le\frac{\a}{2}\right\}$ the integer part of $\frac{\alpha}{2}$, and by 
$N_j =\frac{(N+2j-2)(N+j-3)!}{(N-2)!j!}$
the multiplicity of $\lambda_j=j(N+j-2)$ as an eigenvalue for the Laplace-Beltrami operator on the sphere ${\mathbb S}_{N-1}$. Moreover, understanding that for $\a=0$ a solution to \eqref{H} is exactly a solution to \eqref{LE} and $p_\a=p^*$, we can state:
\begin{theorem}\label{teo:morse-index}
	Let $u_p$ be any radial solution to \eqref{H} with $m$ nodal zones and let $\a\geq 0$. Then there exists $p^\star\in(1,p_\a)$ such that for any $p\in [p^\star,p_\a)$ we have 
	\begin{align} \label{morse-index-p-alpha}
	m(u_p) & = m \sum\limits_{j=0}^{1+\left[\frac{\alpha}{2}\right]} N_j  
	\intertext{as $\alpha>0$ is not an even integer, or}
	\label{morse-index-p-alpha-even}
	m(u_p)& =  m\sum\limits_{j=0}^{ \frac \a 2} N_j + (m-1) N_{1+\frac \a 2}. 
	\end{align}	
	if $\alpha=0$ or it is an even number.
\end{theorem}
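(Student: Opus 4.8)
The plan is to build on the characterization of the Morse index from \cite{AG-sez2}. Writing $\hat\Lambda_1(u_p)\le\hat\Lambda_2(u_p)\le\dots$ for the eigenvalues of the one dimensional \emph{singular} eigenvalue problem
\[
-\big(r^{N-1}\psi'\big)'-p\,r^{N-1+\a}|u_p|^{p-1}\psi\ =\ \hat\Lambda\,r^{N-3}\psi\qquad\text{on }(0,1)
\]
in the natural weighted space, one has $m(u_p)=\sum_{j\ge0}N_j\,\#\{i: \hat\Lambda_i(u_p)<-\l_j\}$ with $\l_j=j(N+j-2)$, and the radial Morse index is $m$, that is $\#\{i:\hat\Lambda_i(u_p)<0\}=m$. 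So there are exactly $m$ negative singular eigenvalues $\hat\Lambda_1(u_p)\le\dots\le\hat\Lambda_m(u_p)<0$, and the whole computation reduces to locating these $m$ values among the thresholds $0>-\l_1>-\l_2>\dots$ in the limit $p\to p_\a$.

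The second ingredient is the asymptotic profile of $u_p$. Through the change of variable $s=r^{1+\a/2}$ a radial solution of \eqref{H} turns into a radial solution of a Lane--Emden type equation in the (generally non integer) dimension $\tilde N=\frac{2(N+\a)}{2+\a}$, for which $p_\a=\frac{\tilde N+2}{\tilde N-2}$ is exactly critical, and correspondingly $\hat\Lambda_i(u_p)=(1+\tfrac{\a}2)^2\,\hat\mu_i$, where $\hat\mu_i$ are the singular eigenvalues of the transformed linearized operator. I would then prove that as $p\to p_\a$ the solution $u_p$ develops a \emph{tower of $m$ bubbles} concentrating at the origin: rescaling around the $k$-th node at the appropriate speed, $u_p$ converges (up to sign) to the radial ground state $U$ of the limit problem $-\D U=|x|^\a U^{p_\a}$ in $\R^N$, i.e. the Aubin--Talenti bubble in dimension $\tilde N$. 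Establishing this multiple blow-up together with the precise concentration rates of the $m$ scales — building on the radial estimates of \cite{AG-sez2} and on bubble--tower arguments — is the first substantial step.

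Next I would transfer this information to the singular eigenvalue problem. Rescaled around each bubble, the linearized operator converges to the linearization of $-\D U=U^{p_\a}$ about the Talenti bubble in dimension $\tilde N$, whose singular spectrum is explicit (via the Emden--Fowler substitution, which turns it into a P\"oschl--Teller eigenvalue problem): the only negative singular eigenvalue is $-(\tilde N-1)$, simple, the next one is $0$ (from the dilation/``translation'' kernel of the bubble), and the rest are positive. Since the $m$ bubbles live at widely separated scales, a localization argument shows that each of the $m$ negative $\hat\mu_i$ converges to $-(\tilde N-1)$, so that
\[
\hat\Lambda_i(u_p)\ \lra\ -\L_\infty:=-(1+\tfrac{\a}2)^2(\tilde N-1)=-(1+\tfrac{\a}2)\big(N-1+\tfrac{\a}2\big)=-\l_{1+\a/2}\qquad(i=1,\dots,m)
\]
as $p\to p_\a$; here the identity $\#\{i:\hat\Lambda_i(u_p)<0\}=m$ is what rules out additional negative singular eigenvalues and forces the $m$ perturbed ``$\hat\mu=0$'' radial modes onto the non negative side. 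Note that $\L_\infty=\l_{1+\a/2}$ with index $1+\a/2$, an integer precisely when $\a$ is an even integer.

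Finally I would compare $-\L_\infty$ with the thresholds $-\l_j$. If $\a$ is \emph{not} an even integer then $1+[\a/2]<1+\tfrac{\a}2<2+[\a/2]$, and since $j\mapsto\l_j$ is strictly increasing this gives $\l_{1+[\a/2]}<\L_\infty<\l_{2+[\a/2]}$; hence for $p$ close enough to $p_\a$ one has $\#\{i:\hat\Lambda_i(u_p)<-\l_j\}=m$ for $0\le j\le 1+[\a/2]$ and $=0$ for $j\ge 2+[\a/2]$, and \eqref{morse-index-p-alpha} follows by multiplying by $N_j$ and summing. If instead $\a=0$ or $\a$ is an even integer, then $\L_\infty=\l_{1+\a/2}$ \emph{exactly}, and for the mode $j=1+\a/2$ the leading order alone is inconclusive: here I would carry the expansion of the $\hat\Lambda_i(u_p)$ one step further around $-\l_{1+\a/2}$ and show that exactly $m-1$ of them remain strictly below $-\l_{1+\a/2}$, the last one lying at or above it (in agreement with the case $m=1$, $\a=0$, where the positive solution is known to have Morse index $1$). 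Pinning down the sign of this correction — which measures how the Dirichlet boundary condition and the mutual interaction of the bubbles perturb the borderline eigenvalue $-(\tilde N-1)$ of the limit bubble — is, I expect, the main obstacle; once it is in hand, $\#\{i:\hat\Lambda_i(u_p)<-\l_j\}$ equals $m$ for $j\le\a/2$, $m-1$ for $j=1+\a/2$ and $0$ for $j>1+\a/2$, which is \eqref{morse-index-p-alpha-even}.
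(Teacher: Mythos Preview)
Your overall plan matches the paper's: reduce to the $m$ negative singular radial eigenvalues via the formula from \cite{AG-sez2}, show they all converge to $-(M-1)$ in the transformed variable (equivalently, $\widehat\Lambda_i\to -\lambda_{1+\alpha/2}$), and read off the Morse index from the thresholds $-\lambda_j$. Your computation of the limit and your case split according to whether $1+\alpha/2$ is an integer are also the right ones.

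However, you misidentify where the real difficulty lies, and this matters.

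\textbf{The even case needs no higher-order expansion.} You say that when $\alpha$ is even ``the leading order alone is inconclusive'' and propose to expand the $\widehat\Lambda_i$ further to decide which side of $-\lambda_{1+\alpha/2}$ each of them sits on. In fact this information is already available \emph{for every $p$}, not just asymptotically: the estimates from \cite{AG-sez2} (recorded in the paper as \eqref{nl<k-general-H}--\eqref{num>k-general-H}) give $\widehat\nu_i(p)<-(M-1)$ for $i=1,\dots,m-1$ and $-(M-1)<\widehat\nu_m(p)<0$. Translating, $\widehat\Lambda_i(p)<-\lambda_{1+\alpha/2}$ for $i\le m-1$ and $\widehat\Lambda_m(p)>-\lambda_{1+\alpha/2}$, strictly. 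So once you know all $\widehat\nu_i\to -(M-1)$, the even case follows immediately; there is no ``correction'' to pin down.

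\textbf{The actual obstacle is the last eigenvalue.} Your ``localization argument'' for the convergence $\widehat\nu_i\to -(M-1)$ hides the hard step. For $i=1,\dots,m-1$ the a priori bound $\widehat\nu_i<-(M-1)$, together with the fact that the only nonpositive eigenvalues of the limit bubble are $-(M-1)$ and $0$, already forces $\widehat\nu_i\to -(M-1)$ (this is Proposition~\ref{prop:limit-nu-j<m}, and it does take some work to rule out the rescaled eigenfunctions all collapsing to zero). For $i=m$, however, the a priori bound only says $-(M-1)<\widehat\nu_m<0$, so the limit could a priori be $0$ as well as $-(M-1)$. Ruling out $\widehat\nu_m\to 0$ is the core of the argument (Proposition~\ref{prop:num-lim}); the paper does it via the variational characterization \eqref{nu-var}, by constructing a test function orthogonal to the first $m-1$ eigenfunctions (a cut-off of $\eta_1$ localized on a carefully chosen bubble, plus correction terms), which in turn requires the orthogonality relations of Lemma~\ref{lem:ortog-limite} and the existence of the ``good'' index of Corollary~\ref{rem-index-k}. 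Your sketch does not address this point, and your heuristic (``the $m$ perturbed $\hat\mu=0$ modes are forced onto the nonnegative side'') is about the eigenvalues $\widehat\nu_{m+1},\dots$, not about why $\widehat\nu_m$ itself cannot drift up to $0$.

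In short: your framework is right, but redirect the effort from a spurious higher-order expansion in the even case toward the genuine issue, namely proving $\widehat\nu_m(p)\to -(M-1)$.
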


\

This result is inspired to some previous papers on the Morse index of nodal radial solutions to the Lane Emden problem \eqref{LE} in dimension $N\geq 3$, see \cite{DIPN>3}  and in dimension $N=2$, see \cite{DIPN=2}, and to the possibility to obtain from its knowledge some existence results of nonradial nodal solutions  whose nodal set, namely $\{x\in B\ : u(x)=0\}$, does not touch the boundary of $B$, as in \cite{GI}. 
It is worth noticing that reading formula \eqref{morse-index-p-alpha-even} for $\alpha=0$ we get 
\[ m(u_p) =  m +(m-1)N\] for $p$ near to the critical exponent $p^*$, which is the exact formula obtained in \cite{DIPN>3} for solution to \eqref{LE}. As far as $\alpha\in(0,2)$, \eqref{morse-index-p-alpha} comes into play and the Morse index is larger, precisely  $m(u_p) = m (1+N)$, highlighting the fact that the Morse index increases with $\a$ and it changes corresponding exactly to the even values of $\a$.

\

{To have a precise idea of the Morse index of $u_p$ we observe that for small values of $\a$ these values are:
	\begin{align*}
	\begin{array}{ll}
	\a=0 & m(u_p)=m+(m-1)N\\
	0<\a<2 &  m(u_p)=m+mN\\
	\a=2 &m(u_p)=m+mN+(m-1)N_2\\
	2<\a<4& m(u_p)=m+mN+mN_2\\
	\a=4 & m(u_p)=m+mN+mN_2+(m-1)N_3
	\end{array}\end{align*}
	and so on, showing that the Morse index corresponding to the integer values of $\a$ is different from every other value for nodal solutions, i.e.~for $m\ge 2$. This seems to be a new phenomenon.

As mentioned before, Theorem \ref{teo:morse-index} brings new informations also in simplest case of positive solutions ($m=1$). In that case
formulas \eqref{morse-index-p-alpha} and \eqref{morse-index-p-alpha-even} can be written as
\begin{align} \label{morse-index-p-alpha-positive}
m(u_p) & =  \sum\limits_{j=0}^{k} N_j  \qquad \mbox{ as } 2(k-1)<\alpha \le 2k 
\end{align}	 
for $p$ near at $p_{\a}$.
\eqref{morse-index-p-alpha-positive} extends the computation made in \cite{AG14} for  $1<\a\le 2$ and describes the different values of the limit for larger values of $\a$.  As we have already remarked this last estimate was the crucial part for the bifurcation result in \cite{AG14}, since we have already noticed that the Morse index of  positive radial solutions converges to 1 as $p\to 1$. In a similar manner we expect that  formulas \eqref{morse-index-p-alpha} and \eqref{morse-index-p-alpha-even} are responsible of a nonradial bifurcation from nodal radial solutions to \eqref{H}, since the Morse index for $p$ near at $1$ has been computed in \cite{A18} obtaining
	\[  m(u_p)= 1+\sum_{i=1}^{m-1}\sum _{0\le j<J_i} N_j , \qquad \text{ as } J_i=\dfrac{(2+\alpha)\beta_i-(\n-2)}{2} .
	\]
	The parameters $\beta_i$ appearing here are linked to the zeros of the Bessel functions of first kind
	\[\jcal_{\beta}(r) = \sum\limits_{k=0}^{+\infty} \dfrac{(-1)^k} {k!\Gamma(k+1+\beta)} \left(\frac{r}{2}\right)^{2k+\beta}, \quad r\ge 0.\]
More precisely $\beta_i$ is characterized as the unique positive parameter for which the $i^{th}$ zero of ${\mathcal J}_{\beta_i}$ coincides with the $m^{th}$ zero of ${\mathcal J}_{\frac{\n-2}{2+\alpha}}$.
 Even though the values of the zeros of the Bessel functions (and therefore the parameters $\beta_i$) can be computed only by numerical approximations, in the same paper it is proved that $J_i> (2 + \a) (m-i)$, so that 
\[ \begin{split}
m(u_p)& \ge    1 + \sum\limits_{i=1}^{m-1}\sum\limits_{j=0}^{(2 + [\a]) (m-i)} N_j  
\\
&  = m+ (m-1) \sum\limits_{j=1}^{2+[\a]} N_{j} + (m-2) \sum\limits_{j=3+[\a]}^{2(2+[\a])} N_{j} \dots + \sum\limits_{j=(m-2)(2+[\a])+1}^{(m-1)(2+[\a])} N_{j} .
\end{split}\]
Starting from this estimate it is showed that the Morse index near at $p=1$ is greater than the one near at $p=p_{\a}$ for every  $m\ge 2$ and $N\ge 3$, therefore  a change in the Morse index appears at some values of $p$, and then we are confident that a bifurcation can arise. The fact that the eventual bifurcation is nonradial is then a consequence of Theorem 1.7 in \cite{AG-sez2}  which assures that any radial solution to \eqref{H} is radially nondegenerate for every $p$ and every $\a$. This has been observed in \cite{Weth} where a nonradial bifurcation from nodal radial solutions to \eqref{H}, with respect to the parameter $\a$ has been obtained.
            \\ 

Lastly we compare formulas \eqref{morse-index-p-alpha} and \eqref{morse-index-p-alpha-even} with the estimate from below of the Morse index obtained in Theorem 1.1 in \cite{AG-sez2}, which holds for any $p\in(1,p_{\a})$ and $\a\geq 0$ and states 
\begin{equation}\label{morse-estimate}
m(u_p) \ge   1 +	(m-1) \sum\limits_{j=0}^{1+[\frac{\alpha}{2}]}N_{j}  .\end{equation}
For positive solutions ($m=1$) it is known that this bound is optimal because the Morse index is equal to 1 when the exponent $p$ approaches the value 1.
For nodal solutions, in the case of Lane-Emden problem ($\a=0$)  in dimension $N\ge 3$, the estimate from below  is attained for $p$ near the critical exponent $p^*=\frac{N+2}{N-2}$ (see \cite{DIPN>3}). 
This is not the case anymore for the H\'enon problem, because the exact value obtained in Theorem \ref{teo:morse-index} overpasses the estimate from below.

\

Let us spend some words on how we get Theorem \ref{teo:morse-index}. First we exploit the characterization of the Morse index and the decomposition of some singular eigenvalues established in \cite{AG-sez2} and we relate the computation of the Morse index of any radial nodal solution, with $m$ nodal zones, to the knowledge of $m$ negative singular radial eigenvalues, see Proposition \ref{general-morse-formula-H}. Next we study their asymptotic behavior as $p\to p_\a$ together with the asymptotic profile of the associated eigenfunctions,  which is needed to deal with the last negative singular eigenvalue. 
This study furnishes immediately Theorem \ref{teo:morse-index} as a consequence of Proposition 1.5 and Theorem 1.7 in  \cite{AG-sez2}.
It also shows that  the bound  \eqref{morse-estimate} is obtained by estimating in a sharp way the singular radial eigenvalues: actually the first $m-1$ eigenvalues reach their upper bound for $p$ near at $p_{\a}$, giving the minimal contribution to the Morse index. In the Lane-Emden problem the contribution coming from the last eigenvalue is constant and therefore it does not influence the asymptotic behavior of the Morse index. On the contrary in the H\'enon problem the contribution of the last eigenvalue varies, and it is maximal for $p$ near at $p_\a$, minimal when $p$ is near at 1. It is thus clear that in the case of $\a=0$ the behavior of the $m-1$ singular negative eigenvalues is sufficient to compute the Morse index, while when $\a>0$ also the last negative eigenvalue comes into play and its estimate is the most difficult one.

\

The description of the asymptotic behavior of the singular eigenvalues and eigenfunctions relies on  the asymptotic analysis of the nodal radial solutions to \eqref{H} with $m$ nodal zones, which is indeed the second main aim of this paper. Let us remark that for the H\'enon problem the asymptotic profile is known only in the case of positive solutions.  Precisely \cite{AG14} describes the limit of the radial solution when $p\to p_{\a}$ and $\a$ is a fixed parameter, while \cite{BWang} study the limit  of both the radial and the ground state solution as $\a\to \infty$ and $p$ is fixed.
	\\
Here we are interested in the limit of the nodal radial solution when the exponent $p$ approaches the threshold $p_{\a}$, and to proceed with the further study of the related eigenvalues we need  to know the limit problem to which the solution converges and the behavior of its critical points and values. Concerning the Lane-Emden problem \eqref{LE} these topics have  been the subject of some interesting papers,  \cite{DIPN>3} and \cite{DIPN=2} among others. 
Solutions to \eqref{LE} indeed tend to concentrate in the origin  as showed in \cite{PW}, and admit a limit problem which can be used, for instance, to construct concentrating solutions in more general domains and with more general nonlinearities.
	This aspect is different when the dimension is $2$ (and $p\to \infty$) or  higher, so the two cases have to be treated separately.
	The H\'enon problem \eqref{H} shares the same duality: indeed when $N=2$  radial solutions exhibit a different limit problem and a different way to concentrate. For this reason we focus here on the case of $N\geq 3$ while we skip to the paper \cite{AG-N=2}, which brings to different conclusions, the study of the asymptotic behavior of $u_p$ and of its Morse index in the case of $N=2$.
	\\
To state the related result we need to introduce some notations.
Let $u_p$ be a radial solution with $m$ nodal zones and  

\

\begin{tabular}{l} 
		$0<r_{1,p}<r_{2,p}\dots <r_{m,p}=1$  be the zeros of $u_p$,\\[.2cm]
		$A_{i,p}$ the nodal zones of $u_p$, precisely \\[.2cm]
		$A_{0,p} = \{ x \, : \, |x|< r_{1,p}\}$ , and 
		$A_{i,p} = \{ x \, : \, r_{i,p}<|x|< r_{i+1,p}\}$  as $i=1,\dots m-1$, \\[.2cm]
		$\mu_{i,p}=\max\limits_{A_{i,p}} |u_{p}|$ the extremal value of $|u_{p}|$ in the   $(i+1)^{th}$ nodal zone $A_{i,p}$\\[.2cm]
		$\sigma_{i,p}\in A_{i,p}$  the extremal point of $|u_{p}|$ in the  $(i+1)^{th}$ nodal zone, \\[.2cm]
		so that $\mu_{i,p} = |u_{i,p}(\sigma_{i,p})|$, \\[.2cm]
		$\widetilde\mu_{i,p}  =\left(\mu_{i,p}\right)^{\frac{p-1}{2+\a}}$,\\[.2cm]
		 $\widetilde A_{i,p} = \left\{x \, : x/\widetilde\mu_{i,p} \in A_{i,p}  \right\}$ .
\end{tabular}

\

For every  $i=0,1,\dots m-1$ we  introduce the rescaled function 
\begin{align}\label{u-tilde-i} 
\widetilde{u}_{i,p}(x)  & := \frac{1}{\mu_{i,p}} \big|u_p\left( \frac x{\widetilde\mu_{i,p}}\right)\big| \quad \text{ as } x\in  \widetilde A_{i,p},  &  . 
\end{align}
Next, let  
\begin{equation}\label{U}
U_{\a}(x) :=  \left(1+\frac{|x|^{2+\a}}{(N+\alpha)(N-2)}\right)^{-\frac{N-2}{2+\a}} 
\end{equation}
be the unique radial  bounded solution of 
\begin{equation}\label{eq-U}
\left\{\begin{array}{ll}
-\Delta U_{\a}=|x|^{\a}U^{p_{\a}}_{\a}\qquad &\text{ in }\R^N, \\
U_{\a} > 0 &\text{ in }\R^N,\\
U_{\a}(0)=1,   &  \\
\end{array}\right.
\end{equation}
see the Appendix.
Of course when $\a=0$ \eqref{U} and \eqref{eq-U} give back the well known Talenti bubbles, which are related with problem \eqref{LE}.

Our main result on the asymptotic profile of radial solutions is the following: 
\begin{theorem}	\label{teo:conv-altre-zone-nodali}
   Let $u_p$ be any radial solution to \eqref{H} with $m$ nodal zones and $\a\geq 0$.
When $p\to p_{\alpha}$ we have
	\begin{align} \label{eq:massimi-pcritico}
	\mu_{i,p} & \to +\infty, & \text{ as } i=0,\dots m-1,\\
	\label{eq:convergenza-riscalata}\widetilde u_{0,p} & \to U_{\a}  \; \text{ in } C^1_{\mathrm{loc}}(\R^N) 
	\end{align}
	and whenever $m\geq 2$
	\begin{align}
	r_{i,p} & \to 0 , \qquad \sigma_{i,p}\to 0  , &  \text{ as } i=1, \dots m-1,\label{zeri-pcritico} \\
	\label{soluzione-pcritico}
	\widetilde u_{i,p} & \to U_{\a}  \quad \text{ in } C^1_{\mathrm{loc}}(\R^N\setminus\{0\})  & \text{ as } i=1,\dots m-1.  
	\end{align}
\end{theorem}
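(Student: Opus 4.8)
The plan is to reduce the radial H\'enon problem to a one--dimensional Lane--Emden problem in a \emph{generalized dimension} and then to run a blow--up analysis displaying the bubble--tower structure. Following Section~\ref{sec:da-u-a-v}, for a radial $u_p$ I set $v_p(t):=u_p(r)$ with $t=r^{\frac{2+\a}2}$; then $v_p$ solves
\[
-v_p''-\tfrac{\tilde N-1}{t}\,v_p'=c_\a\,|v_p|^{p-1}v_p\ \text{ on }(0,1),\qquad v_p(1)=0,
\]
where $\tilde N:=\tfrac{2(N+\a)}{2+\a}\in(2,N]$ and $c_\a:=\tfrac4{(2+\a)^2}$ (which can be absorbed by a scaling), and $p_\a$ is exactly the critical Sobolev exponent $\tfrac{\tilde N+2}{\tilde N-2}$ in dimension $\tilde N$; the quantities $r_{i,p},\sigma_{i,p},\mu_{i,p},\widetilde u_{i,p}$ transform accordingly and $U_\a$ turns into the generalized Talenti bubble $V_\a(t)=(1+c\,t^2)^{-\frac{\tilde N-2}2}$ for a suitable $c>0$. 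I will use two elementary preliminaries: the uniform energy bound $\int_B|\na u_p|^2\le C$ for $p$ in a left neighbourhood of $p_\a$ (from the variational construction of the radial nodal solution and its uniqueness up to sign, recalled in the Introduction); and the fact, immediate from the radial ODE, that on each nodal zone $|u_p|$ has a single interior maximum, located at the origin in the first zone, so that $\mu_{0,p}=|u_p(0)|$.

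\emph{First nodal zone.} The rescaled function $\widetilde u_{0,p}(x)=\frac1{\mu_{0,p}}\bigl|u_p(x/\widetilde\mu_{0,p})\bigr|$ solves $-\Delta\widetilde u_{0,p}=|x|^\a\widetilde u_{0,p}^{\,p}$ on $\{|x|<R_p\}$, $R_p:=r_{1,p}\widetilde\mu_{0,p}$, with $\widetilde u_{0,p}(0)=1$, $0<\widetilde u_{0,p}\le1$. If $\mu_{0,p}$ stayed bounded along a subsequence, then either $R_p$ would be bounded away from $0$ --- and by elliptic estimates $\widetilde u_{0,p}$ would converge to a nontrivial radial solution of $-\Delta U=|x|^\a U^{p_\a}$ on a fixed ball with zero boundary data, contradicting the Pohozaev obstruction for starshaped domains --- or $R_p\to0$, in which case comparison with the torsion function of $\{|x|<R_p\}$ would give $\widetilde u_{0,p}(0)\le R_p^{2+\a}/(2N)\to0$, contradicting $\widetilde u_{0,p}(0)=1$. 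Hence $\mu_{0,p}\to+\infty$; running the same dichotomy with $\mu_{0,p}\to\infty$ gives also $R_p\to+\infty$. Elliptic estimates then yield, along subsequences and hence for the whole family, $\widetilde u_{0,p}\to U$ in $C^1_{\mathrm{loc}}(\R^N)$ with $U$ a positive bounded radial solution of \eqref{eq-U}, so $U=U_\a$ by the Appendix. This is \eqref{eq:convergenza-riscalata} and the case $i=0$ of \eqref{eq:massimi-pcritico}.

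\emph{The remaining zones ($m\ge2$): peeling.} I argue by induction, removing the bumps one at a time, the inductive hypothesis being $\mu_{j,p}\to\infty$, $r_{j,p}\to0$, $\sigma_{j,p}\to0$, $r_{j,p}\widetilde\mu_{j,p}\to0$ and $r_{j+1,p}\widetilde\mu_{j,p}\to+\infty$ for $j<i$. The core is a \emph{neck analysis}: in the variable $v_p$, on the annulus immediately outside the already extracted bump the nonlinear term is negligible, so there $v_p$ is well approximated by the radial harmonic profile $a+b\,t^{2-\tilde N}$ (equivalently $a+b\,|x|^{2-N}$); matching this with the tail $U_\a(x)\sim c_*\,|x|^{2-N}$ of that bump, and using that the finite total energy splits into quanta equal to the energy of $U_\a$, one obtains that the next bump blows up ($\mu_{i,p}\to\infty$), that the intermediate zero collapses ($r_{i,p}\to0$, hence $\sigma_{i,p}\to0$, which is \eqref{zeri-pcritico}), and the scale separations $r_{i,p}\widetilde\mu_{i,p}\to0$ and $r_{i+1,p}\widetilde\mu_{i,p}\to+\infty$ (for $i=m-1$ the outer radius equals $\widetilde\mu_{m-1,p}\to\infty$, so this is automatic). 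Then $\widetilde u_{i,p}(x)=\frac1{\mu_{i,p}}\bigl|u_p(x/\widetilde\mu_{i,p})\bigr|$ solves $-\Delta\widetilde u_{i,p}=|x|^\a\widetilde u_{i,p}^{\,p}$ on $\{r_{i,p}\widetilde\mu_{i,p}<|x|<r_{i+1,p}\widetilde\mu_{i,p}\}$, an annulus that exhausts $\R^N\setminus\{0\}$; elliptic estimates on compact subsets of $\R^N\setminus\{0\}$ give subsequential convergence to a radial solution $U$ of $-\Delta U=|x|^\a U^{p_\a}$ in $\R^N\setminus\{0\}$ with $0\le U\le1$, and the uniform energy bound --- scale invariant precisely at $p=p_\a$, hence inherited by $\widetilde u_{i,p}$ --- forces $\int_{\{|x|<1\}}|\na U|^2<\infty$, ruling out a singular part at $0$, so $U=U_\a$ by the Appendix. (The convergence cannot hold up to $0$, since $\widetilde u_{i,p}$ vanishes at $r_{i,p}\widetilde\mu_{i,p}\to0$ while $U_\a>0$; on every $\{|x|\ge\d\}$ it does hold once $p$ is close enough to $p_\a$.) This gives \eqref{soluzione-pcritico} and completes \eqref{eq:massimi-pcritico}.

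\emph{Main difficulty.} The delicate part is precisely the inductive neck step: proving that the bumps are produced one by one, that \emph{all} the interior nodal radii tend to $0$, that consecutive scales separate, and --- most importantly --- that no energy is wasted in the necks, so that each nodal zone carries exactly one copy of $U_\a$. This calls for quantitative Pohozaev--type identities on the annuli, sharp control of the harmonic profile in the necks, and, since $\tilde N$ need not be an integer, a self--contained ODE classification of the bounded entire radial solutions of the limit equation (the analogue of the Caffarelli--Gidas--Spruck result); one also needs at least a polynomial bound on the growth of $\mu_{i,p}$ in $p_\a-p$ in order to transfer the energy bound to the rescaled functions.
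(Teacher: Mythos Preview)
Your reduction to the one--dimensional problem in generalized dimension $\tilde N=M$ and the treatment of the first nodal zone are in line with the paper (which simply invokes \cite{AG14} for $i=0$). For $i\ge 1$, however, your outline diverges from the paper's argument and, as you yourself flag in the final paragraph, the central step is not carried out. You propose an inductive \emph{peeling} driven by a ``neck analysis'': approximate $v_p$ by a radial harmonic $a+b\,t^{2-\tilde N}$ outside each extracted bump, match with the tail of $U_\a$, and read off blow--up of the next maximum, collapse of the next zero, and the scale separations. None of these conclusions is justified in the proposal; they are precisely the content of Propositions~\ref{conv-altre-zone-nodali-v} and \ref{conv-sol-v}. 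The paper does \emph{not} argue via neck matching or Pohozaev identities on annuli. It first shows ${\mathcal M}_{i,p}\to\infty$ and $t_{i,p},s_{i,p}\to 0$ by contradiction: assuming some ${\mathcal M}_{i,p}$ stays bounded, it splits according to whether $t_{i,p}\to 0$ or $t_{i,p}\to T\in(0,1)$ and derives a nontrivial solution of the critical problem on a ball (Pohozaev) or a solution vanishing on an open set (unique continuation). The scale separations $t_{i,p}\widetilde{\mathcal M}_{i,p}\to 0$, $s_{i,p}\widetilde{\mathcal M}_{i,p}\to 0$, $t_{i+1,p}\widetilde{\mathcal M}_{i,p}\to\infty$ are then obtained by a further case analysis. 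The analytic input that replaces your harmonic--profile matching is the sharp pointwise bound of Atkinson--Peletier type in Lemma~\ref{prop-3.6-DIPN>3}, proved by an Emden--Fowler substitution and ODE comparison; this is the tool your sketch is missing.

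Two corrections of detail. First, transferring the energy bound to $\widetilde u_{i,p}$ does \emph{not} require any control on the growth of $\mu_{i,p}$ in $p_\a-p$: the scaling exponent $\tfrac{p-1}{2}(M-2)-2$ is negative for $p<p_M$, so boundedness of $\widetilde v_{i,p}$ in $\mathcal D_M$ follows once ${\mathcal M}_{i,p}$ is bounded \emph{below}, which is Lemma~\ref{claim-1-facile} (energy quantization $\ge S_M^{M/2}$). Second, ruling out a singularity at $0$ for the limit $U$ via the energy is fine, but it does not pin down the scaling parameter: the paper identifies $U=U_\a$ by showing $\widetilde v(0)=1$ through a direct ODE integration in the spirit of \cite{Iaco}, using $s_{i,p}\widetilde{\mathcal M}_{i,p}\to 0$.
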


The statements concerning $i=0$ (i.e. the first nodal zone) follows easily by the already known results about the positive solution (see \cite{AG14}), while the ones concerning the following nodal zones are far more delicate. 
The main source of difficulty is the supercritical setting, which can be overtaken by performing a change of variable, introduced in \cite{GGN}, that allows to pass to a one-dimensional problem in a subcritical range. In this way the statement of Theorem \ref{teo:conv-altre-zone-nodali} becomes an extended radial version,  in a noninteger dimension, of the analogous one established in \cite{DIPN>3} for the Lane-Emden problem (i.e. when $\alpha=0$). At that point the most delicate part of the proof stands in establishing that the rescaled domains $\widetilde A_{i,p}$ invade  $\R^N$, and this step requests a very fine knowledge  of the speed of convergence (respectively, divergence) of the zeros (respectively, extremal values) of the solution. The proof presented here differs from the one in \cite{DIPN>3},  even in the case $\a=0$, because it does not rely on the a-priori knowledge of the bubble tower shape of the radial solution. Indeed from our approach it follows as a byproduct that  for any $\a> 0$ radial nodal solutions of the H\'enon problem have a bubble tower shape with multiple blow up at the origin.

\

Another interesting  consequence of the asymptotic analysis of the negative singular eigenvalues for the linearized operator $L_u$ and of the characterization of the degeneracy of radial solutions given in \cite{AG-sez2} is the following result:

\begin{theorem}\label{teo:nondegeneracy}
	Let $u_p$ be any radial solution to \eqref{H} with $m$ nodal zones and let $\a\geq 0$. Then there exists $\bar p \in(1,p_\a)$ such that $u$ is nondegenerate for any $p\in [\bar p ,p_\a)$.
\end{theorem}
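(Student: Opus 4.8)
The plan is to read off nondegeneracy from the spectral picture of the singular eigenvalue problem, in the same way that Theorem~\ref{teo:morse-index} is read off from it. Recall from \cite{AG-sez2} (Proposition~1.5, together with the Fourier decomposition of $H^1_0(B)$ into spherical harmonics) that a radial solution $u_p$ is degenerate if and only if $0$ belongs to the spectrum of $L_{u_p}$ in some spherical sector, i.e.\ if and only if $\widehat{\L}_i(p)=-\l_k$ for some $i\ge 1$ and some $k\ge 0$, where $\l_k=k(N+k-2)$ are the eigenvalues of the Laplace--Beltrami operator on $\mathbb S_{N-1}$ and $\widehat{\L}_1(p)<\widehat{\L}_2(p)<\dots$ are the singular radial eigenvalues of $L_{u_p}$ appearing in Proposition~\ref{general-morse-formula-H}. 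So I would try to locate the $\widehat{\L}_i(p)$, for $p$ in a left neighborhood of $p_\a$, sharply enough to be sure that none of them equals any of the values $-\l_k$.

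Two reductions come for free. First, Theorem~1.7 of \cite{AG-sez2} says that every radial solution of \eqref{H} is radially nondegenerate, for every $p$ and every $\a$; this is exactly the statement $\widehat{\L}_i(p)\neq 0=-\l_0$ for all $i$. Since, by Proposition~\ref{general-morse-formula-H}, the negative singular radial eigenvalues of $L_{u_p}$ for $p$ close to $p_\a$ are exactly $\widehat{\L}_1(p)<\dots<\widehat{\L}_m(p)<0$, it follows that $\widehat{\L}_i(p)>0$ for every $i\ge m+1$, so these never meet a value $-\l_k$ with $k\ge 1$. Second, each of the $m$ eigenvalues $\widehat{\L}_1(p),\dots,\widehat{\L}_m(p)$ converges, as $p\to p_\a$, to a singular eigenvalue of the limit problem associated with $U_\a$ --- this is part of the asymptotic analysis that underlies Theorem~\ref{teo:morse-index} and rests on the blow-up description of Theorem~\ref{teo:conv-altre-zone-nodali} --- hence it stays bounded, and only finitely many of the $-\l_k$ are a priori dangerous. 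It therefore suffices to keep $\widehat{\L}_1(p),\dots,\widehat{\L}_m(p)$ away from the finitely many nearby values $-\l_k$.

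When $\a$ is not an even integer, the asymptotic analysis yields $\widehat{\L}_i(p)\to\widehat{\L}_\infty$ for every $i=1,\dots,m$, with $-\l_{2+[\a/2]}<\widehat{\L}_\infty<-\l_{1+[\a/2]}$; because $\a$ is not even, $\widehat{\L}_\infty$ differs from every $-\l_k$, so there is $\bar p<p_\a$ with $-\l_{2+[\a/2]}<\widehat{\L}_i(p)<-\l_{1+[\a/2]}$ for all $p\in[\bar p,p_\a)$ and all $i$, and no resonance can occur. When $\a=2k_0$ is a (possibly zero) even integer, all the $\widehat{\L}_i(p)$ instead converge to the \emph{resonant} value $-\l_{1+k_0}$, and the conclusion must be squeezed out of the refined information behind \eqref{morse-index-p-alpha-even}: for $p$ in a left neighborhood of $p_\a$ one has the \emph{strict} inequalities $\widehat{\L}_i(p)<-\l_{1+k_0}$ for $i=1,\dots,m-1$ and $\widehat{\L}_m(p)>-\l_{1+k_0}$ --- these are precisely what produces the count $m\sum_{j=0}^{k_0}N_j+(m-1)N_{1+k_0}$ --- while all the $\widehat{\L}_i(p)$ stay strictly inside $(-\l_{2+k_0},-\l_{k_0})$. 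In both cases, choosing $\bar p$ to be the largest of the finitely many thresholds involved, on $[\bar p,p_\a)$ none of the $\widehat{\L}_i(p)$ hits a value $-\l_k$, so $0$ is not a singular eigenvalue of $L_{u_p}$ in any sector and $u_p$ is nondegenerate.

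The genuinely hard step is the one that is also the hard part of Theorem~\ref{teo:morse-index}: the sign analysis in the even case, i.e.\ proving not merely that $\widehat{\L}_i(p)\to-\l_{1+\a/2}$ but that $\widehat{\L}_i(p)+\l_{1+\a/2}$ keeps a fixed sign on a left neighborhood of $p_\a$ --- negative for the first $m-1$ eigenvalues, positive for the last. The limit is resonant because, for $\a$ even, the linearization of the limit problem $-\Delta U_\a=|x|^\a U_\a^{p_\a}$ has a nontrivial kernel in the spherical sector of order $1+\a/2$, so the rate and the sign of $\widehat{\L}_i(p)+\l_{1+\a/2}$ can only be extracted from a Pohozaev-type integral identity, in which the sign is governed by the subcriticality $p<p_\a$ and by the positivity of suitable integrals of the rescaled eigenfunctions against $U_\a$; moreover the bubble-tower coupling of the $m$ nodal zones (Theorem~\ref{teo:conv-altre-zone-nodali}) has to be tracked carefully to separate the behavior of the last eigenvalue from that of the others. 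Once this analysis is in place --- and it is, in the proof of Theorem~\ref{teo:morse-index} --- Theorem~\ref{teo:nondegeneracy} follows from the characterization of degeneracy (Proposition~1.5 of \cite{AG-sez2}) together with the radial nondegeneracy (Theorem~1.7 of \cite{AG-sez2}), exactly as outlined above.
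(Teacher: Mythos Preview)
Your argument is correct and follows essentially the same route as the paper: reduce degeneracy to the condition $\left(\frac{2+\a}{2}\right)^2\widehat\nu_k(p)=-\l_j$ for some $k\le m$ and some integer $j\ge 1$ (Proposition~\ref{prop:charact-nondegeneracy}), then use the limits $\widehat\nu_k(p)\to-(M-1)$ together with the strict bounds \eqref{nl<k-general-H}, \eqref{num>k-general-H} to trap each $\widehat\nu_k(p)$ strictly between two consecutive forbidden values, distinguishing the cases $\a$ even and $\a$ not even exactly as you do.

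One correction worth making: you describe the sign information in the even case (that $\widehat\nu_k(p)+(M-1)$ is strictly negative for $k\le m-1$ and strictly positive for $k=m$) as ``the genuinely hard step'', to be extracted from a Pohozaev-type identity and the bubble-tower coupling. In fact these strict inequalities are the global a~priori bounds \eqref{nl<k-general-H} and \eqref{num>k-general-H}, valid for \emph{every} $p\in(1,p_M)$ and taken directly from \cite{AG-sez2}; no asymptotic refinement or Pohozaev argument is needed for them. The hard analytic work in the paper lies elsewhere, namely in proving the limit $\widehat\nu_m(p)\to-(M-1)$ (Proposition~\ref{prop:num-lim}), which is what pins the last eigenvalue to the correct side in the \emph{non}-even case and is needed for the Morse index computation, but for nondegeneracy in the even case the a~priori strict inequalities already do the job.
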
 
Let us recall that a solution $u$ is said nondegenerate whenever the linearized equation $L_u(v)=0$ does not admit nontrivial solutions in $H^1_0(B)$. This consideration is new, even in the  simpler case of the Lane-Emden problem, namely when $\a=0$, and extends a previous result in this direction in \cite{AG-sez2} where it was proved that $u$ is radially nondegenerate, namely that the linearized equation does not admit any radial solution.

To point out the usefulness of a nondegeneracy result as Theorem \ref{teo:nondegeneracy} we give here  an easy application in proving existence results.

\begin{theorem}\label{teo:esistenza}
Let $m\ge 1$ be any integer, $\a=0$ or $\a>1$, and  
\[\Omega_t:=\{x+t\sigma(x) : x\in B\},\] where $\sigma:\bar B\to \R^N$ is a smooth function, be a perturbation of the unit ball $B$. Then for every $p\in (\bar p,p_\a)$ problem \eqref{H} settled in $\Omega_t$ admits a classical solution with $m$ nodal zones, whose nodal set, when $m>1$, does not touch the boundary $\partial \Omega_t$ for $t$ small enough.
\end{theorem}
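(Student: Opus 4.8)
The plan is to obtain the solution $u_t$ on $\Omega_t$ by continuation from the radial solution $u_p$ on $B$, the engine being the implicit function theorem, which is available precisely because $u_p$ is nondegenerate by Theorem~\ref{teo:nondegeneracy}. First I would straighten the domain: for $|t|$ small the map $\Phi_t(y):=y+t\sigma(y)$ is a smooth diffeomorphism of $\bar B$ onto $\bar\Omega_t$, with $\Phi_0=\mathrm{id}$ and $0\in\Omega_t$, and a function $u$ solves \eqref{H} in $\Omega_t$ if and only if $v:=u\circ\Phi_t$ solves
\[
-\operatorname{div}\!\big(A_t(y)\nabla v\big)=g_t(y)\,|v|^{p-1}v\ \text{ in }B,\qquad v=0\ \text{ on }\partial B,
\]
where $A_t$ is a smooth, symmetric, uniformly elliptic matrix field, $g_t(y)=|\det D\Phi_t(y)|\,|\Phi_t(y)|^{\a}$, and $A_0=\mathrm{Id}$, $g_0(y)=|y|^{\a}$; all data depend smoothly on $t$ since $\sigma$ is smooth.

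Next I would set up the abstract equation. Fix $q>N$, put $E:=W^{2,q}(B)\cap W^{1,q}_0(B)$, and define $\mathcal F(t,v):=-\operatorname{div}(A_t\nabla v)-g_t(\cdot)\,|v|^{p-1}v\in L^q(B)$, so that $\mathcal F(0,u_p)=0$. Because $u_p$ is bounded and --- here the hypothesis $\a=0$ or $\a>1$ enters --- the pulled-back weight $g_t$ and its $t$-derivative are regular enough in the chosen norm, $\mathcal F$ is of class $C^1$ near $(0,u_p)$, and its partial differential in $v$ at $(0,u_p)$ is $D_v\mathcal F(0,u_p)w=-\Delta w-p|y|^{\a}|u_p|^{p-1}w=L_{u_p}(w)$. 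Since $-\Delta:E\to L^q$ is an isomorphism and multiplication by the bounded function $p|y|^{\a}|u_p|^{p-1}$ is compact from $E$ into $L^q$ (it factors through the compact embedding $E\hookrightarrow C^{0,\g}$), the operator $L_{u_p}:E\to L^q$ is Fredholm of index zero; every element of its kernel lies in $C^2(\bar B)\cap H^1_0(B)$ by elliptic regularity, hence vanishes by Theorem~\ref{teo:nondegeneracy}, so $L_{u_p}$ is an isomorphism.

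The implicit function theorem then yields $t_0>0$ and a $C^1$ branch $t\mapsto v_t\in E$, $|t|<t_0$, with $v_0=u_p$ and $\mathcal F(t,v_t)=0$; undoing the change of variables, $u_t:=v_t\circ\Phi_t^{-1}$ solves \eqref{H} in $\Omega_t$, and a Schauder bootstrap shows it is classical. For the nodal count I would note that by Sobolev embedding $v_t\to u_p$ in $C^1(\bar B)$ as $t\to0$; the nodal set of $u_p$ is the union of $\partial B$ and the $m-1$ spheres $\{|y|=r_{i,p}\}$, and it is transverse because $u_p'(r_{i,p})\neq0$ (otherwise $u_p\equiv0$ by uniqueness for the radial ODE). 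Hence for $t$ small the interior nodal set of $v_t$ consists of $m-1$ closed hypersurfaces, each a small $C^1$-perturbation of one of those spheres and therefore contained in a fixed compact subset of $B$; consequently $v_t$, hence $u_t$, has exactly $m$ nodal zones, and when $m>1$ the interior nodal set of $u_t$ (the $\Phi_t$-image of those hypersurfaces) stays at positive distance from $\partial\Omega_t$.

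The hard part is Step~2, i.e.\ the choice of the functional framework: for $\a>0$ the exponent $p$ is supercritical, $p>p^*$, so the variational $H^1_0$ setting is not available, and one has to work in spaces adapted to the supercritical growth and to the weight $|x|^{\a}$ while still keeping $\mathcal F$ of class $C^1$ and the linearization an isomorphism there. Controlling the $C^1$-dependence on $t$ of the pulled-back weighted nonlinearity $g_t(\cdot)|v|^{p-1}v$, whose $t$-derivative carries a factor behaving like a power $|\cdot|^{\a-1}$ near the (moving) singular point of $g_t$, is exactly what forces the restriction $\a=0$ or $\a>1$.
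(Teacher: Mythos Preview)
Your proposal is correct and follows essentially the same route as the paper: pull back the problem to $B$ via the diffeomorphism $\Phi_t$, set up an abstract map $F(t,\cdot)$ vanishing at $(0,u_p)$, observe that $D_vF(0,u_p)=L_{u_p}$ is an isomorphism thanks to Theorem~\ref{teo:nondegeneracy}, apply the implicit function theorem, and read off the nodal structure by continuity. The paper does exactly this, working in the H\"older framework $C^{2,\gamma}_0(\bar B)\to C^{0,\gamma}(\bar B)$ rather than in your Sobolev framework $W^{2,q}\cap W^{1,q}_0\to L^q$ with $q>N$; both choices are adequate, and your discussion of the transversality of the nodal spheres of $u_p$ and the resulting stability of the nodal structure is in fact more detailed than the paper's one-line appeal to continuity.

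One remark worth making: in the H\"older setting the factor of order $|\,\cdot\,|^{\a-1}$ in $\partial_t g_t$ forces $\a>1$ to land in $C^{0,\gamma}$ (for some small $\gamma$), which is precisely the paper's restriction. In your $L^q$ setting the integrability condition is only $(\a-1)q>-N$, i.e.\ $\a>1-N/q$, so by taking $q$ close to $N$ one could in principle cover all $\a>0$; your closing paragraph attributes the restriction $\a>1$ to the framework, but strictly speaking that bound is tied to the H\"older choice rather than to yours.
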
 
In the authors' opinion the existence result in Theorem \ref{teo:esistenza} is interesting for two reasons. First because for $\a>1$ it inherits a supercritical range, where the lack of variational setting makes more difficult to obtain existence of solutions.
{ Secondly because it allows to construct solutions shaped as the radial solutions without requiring any symmetry on $\Omega$.} 

\

The paper is organized as follows. We start in Section \ref{sec:da-u-a-v} by proving the asymptotic profile of nodal radial solutions to \eqref{H} with $m$ nodal zones.  In Section \ref{se:3} we recall the characterization of the Morse index of nodal radial solutions and we relate its computation to the computation of the asymptotic limit of $m$ negative singular radial eigenvalues as $p\to p_\a$.  The analysis of the first $m-1$ ones, outlined in subsection \ref{3:m-1},  is based on the knowledge of the limit singular eigenvalue problem and to an estimate previously obtained in \cite{AG-sez2}. The major difficulty is the analysis of the last negative singular radial eigenvalue,  performed in subsection \ref{3:m}, which requires some fine estimates that extends the previous one in the case of $\a=0$. In Section \ref{se:4} we prove Theorem \ref{teo:nondegeneracy} and Theorem \ref{teo:esistenza}.  Lastly  we recall some well known fact about existence and uniqueness of solutions for the limit problems in the Appendix.


\

\section{The asymptotic profile of $u_p$ via a "radially extended" version of the Lane-Emden problem}\label{sec:da-u-a-v}
In this section we prove Theorem \ref{teo:conv-altre-zone-nodali} by relating radial nodal solutions to \eqref{H} with nodal solutions to a radially extended version of the Lane Emden problem and studying the asymptotic behavior of these radially extended solutions. In order to distinguish the two radial solutions to \eqref{H} we will denote hereafter by $u_p$ the nodal radial solution to \eqref{H} with $m$ nodal zones, that satisfies
\begin{equation}\label{segno-in-zero-u}
u_p(0)>0
\end{equation}
recalling that the other is given by the opposite of $u_p$.\\
The proof of Theorem \ref{teo:conv-altre-zone-nodali} will be given in a series of propositions in which we consider initially the first nodal zone, which is easier to handle, and then the case of the subsequent ones.

To begin with, we furnish the proof of Theorem \ref{teo:conv-altre-zone-nodali} for $i=0$, which is an immediate consequence of the asymptotic behavior of positive radial solutions in \cite{AG14} and does not rely on the radially extended Lane-Emden problem.

\begin{proof}[ Proof of Theorem \ref{teo:conv-altre-zone-nodali} for $i=0$]	
 Let us denote for a while by $u^m_p$ the nodal radial solution to \eqref{H} with $m$ nodal zones, that satisfies \eqref{segno-in-zero-u}.
	It suffices to notice that,  letting $r_{1,p}^m$ be the first zero of $u_p^m$, the scaled  function \[ \left(r^m_{1,p}\right)^{\frac{2+\a}{p-1}} u^m_p ( r^m_{1,p} x )\] 
	coincides with $u^1_p(x)$, the unique positive radial solution to the H\'enon problem in $B_1$. So applying \cite[Proposition 3.6]{AG14}  gives \eqref{eq:massimi-pcritico} and \eqref{eq:convergenza-riscalata} for $i=0$.	
\end{proof}

The investigation of subsequent nodal zones is more delicate. An useful tool is the change of variables
\begin{equation}\label{transformation-henon-no-c}
v(t)= \left(\frac{2}{2+\a}\right)^{\frac{2}{p-1}} u(r) , \qquad t= r^{\frac{2+\a}{2}},
\end{equation}
 which has been introduced in \cite{GGN} and transforms radial solutions to \eqref{H} into solutions of the radial extended Lane-Emden problem
\begin{equation}\label{LE-radial}
\begin{cases}
- \left(t^{M-1} v^{\prime}\right)^{\prime}= t^{M-1} |v|^{p-1} v  , \qquad  & 0< t< 1, \\
v'(0)=0 \ , \ v(1) =0  &
\end{cases}\end{equation}
where
\begin{equation}\label{M-a}
M=M(N,\alpha):  =\frac{2(N+\alpha)}{2+\alpha}
\end{equation}
plays the role of a noninteger dimension.
To deal with this problem we need to introduce the suitable functions spaces to which solutions to \eqref{LE-radial} belong. With this aim, for any $M,q\in\R$, $M\ge 2$ and $q\ge 1$,  we  let $L^q_M$ be the weighted Lebesgue space of measurable functions $v:(0,1)\to\R$ such that
\[ \int_0^1 r^{M-1} |v|^q dr < +\infty.\]
Next we denote by $H^1_{M}$ the subspace of $L^2_M$  made up by that functions $v$ which have weak first order derivative in $L^2_M$
 with \[ \int_0^1 r^{M-1} |v'|^2 dr < \infty, \]
 and
\begin{equation}\label{H-1-0-M}
H^1_{0,M} = \left\{ v\in H^1_M \, : \, v(1)=0\right\} \end{equation}
 which is  Hilbert space with the norm 
$$\|v\|^1_{M}:=\left(\int_0^1 r^{M-1}(v')^2\ dr\right)^\frac 12 $$
 due to a Poincar\'e inequality in the space $H^1_{0,M}$, see \cite[Lemma 6.1]{AG-sez2}.
The transformation \eqref{transformation-henon-no-c} maps  $H^1_{0,\rad}(B)$, the set of radial functions  in $H^1_0(B)$, into $H^1_{0,M}$ with  $M$ as in \eqref{M-a} and can be used in any dimension $N\geq 2$.
It allows to pass from $u_p$ (the radial solution to \eqref{H} with $m$ nodal zones satisfying \eqref{segno-in-zero-u}) to the solution to \eqref{LE-radial}
with $m$ nodal  zones satisfying 
	\begin{equation}\label{segno-in-zero-v}
v_{p}(0)>0 .
\end{equation}

The equivalence between radial solutions to \eqref{H} and solution to \eqref{LE-radial}, both in classical and weak sense and in any dimension $N\ge 2$, has been proved rigorously in \cite[Proposition 5.6 and Remark 5.12]{AG-sez2}. 
For the sake of completeness we recall that a weak radial solution to \eqref{H} can be seen as $u\in H^1_{0,N}$ such that 
\begin{equation}\label{H-radial-weaksol}
\int_0^1 r^{N-1} \left(u'\varphi'- r^{\a}|u|^{p-1}u \varphi\right) dr =0
\end{equation}
for any $\varphi\in H^1_{0,N}$, and similarly a weak solution to \eqref{LE-radial} is $v\in H^1_{0,M}$ such that
\begin{equation}\label{LE-radial-weaksol}
\int_0^1 t^{M-1} \left(v'\varphi'- |v|^{p-1}v \varphi\right) dt =0
\end{equation}
for any $\varphi\in H^1_{0,M}$.  In particular the same uniqueness result which holds for radial solutions to \eqref{LE} says that, for every integer $m\geq 1$, \eqref{LE-radial} admits a couple of solutions with $m$ nodal zones, which are one the opposite of the other and hence a unique solution $v_p$ which satisfies \eqref{segno-in-zero-v}.

Problem \eqref{LE-radial} can be seen as a "radially extended" version of the Lane-Emden problem since when $M$ is an integer $v_p$ actually is the radial nodal solution to the Lane-Emden problem 
\[ \left\{\begin{array}{ll}
-\Delta v = |v|^{p-1} v \qquad & \text{ in } B, \\
v= 0 & \text{ on } \partial B,
\end{array} \right.
\tag{\ref{LE}}\]
settled in the unitary ball of $\R^{M}$.
Also notice that when $N\geq 3$ then $M>2$ and the threshold exponent $p_{\a}$ of \eqref{H} can be expressed in term of the parameter $M=M(N,\alpha)$ as
\begin{align}
\label{p-critico-M}
p_\a=p_{M}=\frac{M+2}{M-2}. 
\end{align} 
For integer $M\ge 3$, $p_M$ is the critical value of the Sobolev immersion of $H^1_{0}(B)$ into $L^q(B)$, and it constitutes the threshold for the existence of solutions for \eqref{LE}. 
 For non integer $M>2$ the value $p_M$ is still the critical exponent for the immersion of $H^1_{0,M}$ into $L^{q}_M$   (see \cite[Lemma 6.4]{AG-sez2}) and constitutes again the threshold for the existence of solutions to \eqref{LE-radial}.
 
We will give the proof of Theorem \ref{teo:conv-altre-zone-nodali} in terms of the asymptotic behavior of the function $v_p$ as $p\to p_M$. For integer values of $M$ this has been proved in \cite[Propositions 3.3, 3.4 and Theorem 3.7]{DIPN>3}. Here we extend their result to any value of $M>2$.   

\

Let us first point out some qualitative property of the solutions $v_p$ that shall be useful in the sequel, namely
\begin{lemma}[Lemma 5.13 in \cite{AG-sez2}]\label{prop-4.2-H}
	Let $v _p\in H^1_{0,M}$ be the unique weak solution to \eqref{LE-radial} with $m$ nodal zones that satisfies \eqref{segno-in-zero-v}.  Then $v_p\in C^2[0,1]$ with 
	\[ v_p(0)= {\mathcal M}_{0,p} , \qquad v_p'(0) = 0.\] 
	Besides $v_p$ is strictly decreasing in its first nodal zone and it has a unique critical point, $s_{i,p}$ in any nodal domain. 
	In particular $s_{0,p}=0$ is the global maximum point  for $v_p$ and for $i=1,\dots m-1$ it holds
	\[{\mathcal M}_{0,p}=v_p(0) > {\mathcal M}_{1,p} =|v_p(s_{1,p})|> \dots {\mathcal M}_{m-1,p}=|v_p(s_{m-1,p})|.\]
	\end{lemma}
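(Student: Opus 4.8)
The plan is to treat \eqref{LE-radial} as the singular second order ODE
$v''+\tfrac{M-1}{t}v'+|v|^{p-1}v=0$ on $(0,1)$, and to deduce every assertion from two elementary devices: the divergence form of the equation integrated near $t=0$, and the energy function $E(t):=\tfrac12\big(v_p'(t)\big)^2+\tfrac1{p+1}|v_p(t)|^{p+1}$. Along a solution $E'(t)=-\tfrac{M-1}{t}\big(v_p'(t)\big)^2\le 0$; moreover $v_p$ cannot be constant on any subinterval (by uniqueness for the Cauchy problem that would force $v_p\equiv$ const, contradicting $v_p(1)=0$ and $v_p\not\equiv 0$), so $v_p'\neq 0$ almost everywhere and $E$ is \emph{strictly} decreasing on $(0,1]$. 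This single fact will give both the global maximality of $v_p(0)$ and the ordering of the extremal values.

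First I would settle the regularity. On every interval $[\delta,1]$ with $\delta>0$ the equation is a regular linear ODE in $v_p$ once $v_p$ is known to be bounded, so a bootstrap gives $v_p\in C^2(0,1]$. Near the singular endpoint I would integrate the equation in the form $t^{M-1}v_p'(t)=-\int_0^t\tau^{M-1}|v_p|^{p-1}v_p\,d\tau$: using that $v_p\in H^1_{0,M}$ is bounded near $0$ — this is where the weighted setting and the noninteger dimension $M$ enter, and I would borrow it from the regularity theory in \cite{AG-sez2} — the right-hand side is $O(t^M)$, hence $v_p'(t)=O(t)$, so $v_p\in C^1[0,1]$ with $v_p'(0)=0$, and then $v_p\in C^2[0,1]$ up to both endpoints. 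With $v_p'(0)=0$ one gets $E(0)=\tfrac1{p+1}v_p(0)^{p+1}$, and the monotonicity of $E$ gives $|v_p(t)|^{p+1}\le(p+1)E(t)\le v_p(0)^{p+1}$ for every $t$; since $v_p(0)>0$ by \eqref{segno-in-zero-v}, this identifies $v_p(0)$ as the global maximum of $v_p$, strict for $t>0$. Hence $s_{0,p}=0$ and $\mathcal M_{0,p}=v_p(0)$.

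Next I would deal with the first nodal zone and with uniqueness of the critical point in each zone. On $(0,r_{1,p})$ we have $v_p>0$, so $w:=t^{M-1}v_p'$ satisfies $w'=-t^{M-1}v_p^{\,p}<0$ and $w(0)=0$, whence $w<0$ and $v_p$ is strictly decreasing there, with only critical point $s_{0,p}=0$. On a later nodal zone $(r_{i,p},r_{i+1,p})$, where $v_p$ has a constant sign $\varepsilon=\pm1$ and vanishes with nonzero derivative at the endpoints (otherwise $v_p\equiv0$), $|v_p|$ attains an interior maximum at some $s_{i,p}$; at any interior critical point $s$ one has $v_p''(s)=-|v_p(s)|^{p-1}v_p(s)=-\varepsilon|v_p(s)|^{p}$, which has sign $-\varepsilon$, so every interior critical point of $v_p$ in this zone is a strict local maximum when $\varepsilon=1$ and a strict local minimum when $\varepsilon=-1$. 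Two critical points of the same type would force an intermediate critical point of the opposite type, which is impossible; hence $s_{i,p}$ is the unique critical point of $v_p$ in the zone and $\mathcal M_{i,p}=|v_p(s_{i,p})|$. Finally, since $0=s_{0,p}<s_{1,p}<\dots<s_{m-1,p}$ and $E(s_{i,p})=\tfrac1{p+1}\mathcal M_{i,p}^{\,p+1}$, the strict decrease of $E$ gives $\mathcal M_{0,p}>\mathcal M_{1,p}>\dots>\mathcal M_{m-1,p}$.

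The hard part is the behaviour at the singular point $t=0$: the boundedness of $v_p$ near $0$ and the identity $v_p'(0)=0$ in the weighted space $H^1_{0,M}$ with noninteger $M$. Everything else is ODE bookkeeping resting on the monotone energy $E$ and on the sign of $v_p''$ at critical points. In fact this statement is Lemma 5.13 of \cite{AG-sez2}, so in the present paper it may simply be quoted.
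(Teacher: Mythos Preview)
Your proof is correct. The paper does not prove this lemma at all: it is stated with the attribution ``Lemma 5.13 in \cite{AG-sez2}'' and simply quoted, exactly as you yourself observe in your last sentence. So there is no ``paper's own proof'' to compare against beyond the citation.

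Your argument is the standard and natural one: regularity at $t=0$ via the integrated equation $t^{M-1}v_p'(t)=-\int_0^t \tau^{M-1}|v_p|^{p-1}v_p\,d\tau$ together with boundedness of $v_p$ (borrowed from \cite{AG-sez2}); uniqueness of the critical point in each nodal zone from the sign of $v_p''$ at critical points; and the ordering of the extremal values from the strictly decreasing energy $E(t)=\tfrac12(v_p')^2+\tfrac1{p+1}|v_p|^{p+1}$. One cosmetic remark: the phrase ``$v_p'\neq 0$ almost everywhere'' is not quite what you use; what you actually need (and have) is that $v_p'$ cannot vanish identically on any subinterval, which already gives strict monotonicity of $E$ via $E(a)-E(b)=\int_a^b\tfrac{M-1}{t}(v_p')^2\,dt>0$.
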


In order to study its asymptotic profile as  $p\to p_M$, we denote  hereafter  by
\begin{enumerate}[-]
\item $0<t_{1,p}<t_{2,p}\dots <t_{m,p}=1$   the zeros of $v_p$, 
\item $s_{0,p} =0$ the extremal point of $v_{p}$ in its first nodal  zone  $[0, t_{1,p})$,
\item $s_{i,p}$  the extremal point of $v_{p}$ in its  $(i+1)^{th}$  nodal  zone  $(t_{i,p}, t_{i+1,p})$  as $i=1,\dots m-1$, 
\item 
$\mathcal{M}_{i,p}= (-1)^i v_p(s_{i,p})$ 
 the extremal value of $|v_{p}|$ in the  $(i+1)^{th}$  nodal  zone, as $i=0,1,\dots m-1$,
\end{enumerate}
and,  letting $t_{0,p}=0$, we define the scaling
\begin{align}\label{v-tilde}
\widetilde{v}_{i,p}(t)  & = \frac{(-1)^i}{{\mathcal M}_{i,p}} v_p\left(\frac{t}{\widetilde {\mathcal M}_{i,p}}\right)   \quad \text{as }  t_{i,p}< \frac{t}{\widetilde {\mathcal M}_{i,p}} < t_{i+1,p}, 
\end{align} 
as $i=0,\dots, m-1$. Here	
\begin{equation}\label{m-tilde}
\widetilde {\mathcal M}_{i,p}  = \left({\mathcal M}_{i,p}\right)^{\frac{p-1}{2}}.
\end{equation}
These newly introduced items are related to the respective ones for the H\'enon problem  by the following relations 
\begin{enumerate}[-]
	\item $r_{i,p} = \left(t_{i,p}\right)^{\frac2{2+\a}}$ are the zeros of $u_p$,
	\item ${\mu}_{i,p} = \left(\frac{2+\a}2\right)^{\frac{2}{p-1}} \mathcal M_{i,p}$ are the local extremal values of $u_{p}$,
	\item $\sigma_{i,p} = \left(s_{i,p}\right)^{\frac2{2+\a}}$ are extremal values of $u_p$,
\end{enumerate}
\begin{equation}\label{u-v-tilde}
	\widetilde u_{i,p}(r)=\widetilde v_{i,p}\left( \frac{2}{2+\a} r^{\frac {2+\a}2}\right).
\end{equation}
 It is easy to check that, for $i=0,\dots, m-1$ the functions $\widetilde {v}_{i,p}$ solves
	\begin{equation}\label{eq:riscalate}
	\begin{cases}
	-(t^{M-1}\widetilde {v}_{i,p}')'=t^{M-1}\widetilde {v}_{i,p}^p, &\text{ for } t_{i,p}\widetilde {\mathcal M}_{i,p}<t<t_{i+1,p}\widetilde {\mathcal M}_{i,p}\\
	\widetilde {v}_{i,p}\left(t_{i+1,p}\widetilde {\mathcal M}_{i,p}\right)=0\\
	{\widetilde {v}_{i,p}\left(t_{i,p}\widetilde {\mathcal M}_{i,p}\right)=  0} &\text{ when } i\geq 1
	\end{cases}
	\end{equation}
	For simplicity we will assume that $\widetilde {v}_{i,p}$ is defined on $(0,\infty)$ extending at zero outside the interval $ (t_{i,p}\widetilde {\mathcal M}_{i,p},t_{i+1,p}\widetilde {\mathcal M}_{i,p})$.
	\\
	The main point of this section will be to show that the functions $\widetilde {v}_{i,p}$ admit for $i=0,\dots,m-1$ the following limit problem
\begin{equation}\label{equazione-limite-v}\begin{cases}
   -\left(t^{M-1} V'\right)' = t^{M-1} V^{p_M}, &  t>0 ,\\
V(t)>0 &  t>0 ,
\end{cases}\end{equation}
with the condition
\begin{equation}\label{eq:cond-V-in-zero}
V(0)=1
\end{equation}
whose unique weak solution in the space $\mathcal D_M(0,\infty)$ is given by 
\begin{equation}\label{soluzione-limite-v} 
 V_M(t)=\left(1+\frac{t^2}{M(M-2)} \right)^{-\frac{M-2}{2}}
\end{equation}
see the Appendix. 
Here $\mathcal D_M(0,\infty)$ stands for the closure of $C^{\infty}_0[0,\infty)$ under the norm
\[\int_0^\infty r^{M-1}|v'|^2\ dr, \]
which is a natural generalization of the space $D^{1,2}_{\rad}(\R^N)$ to the case of the non-integer dimension $M$, and by weak solution to 
\eqref{equazione-limite-v}  we mean a function $V\in \mathcal D_M(0,\infty)$ such that
\[\int_0^\infty r^{M-1}V'\varphi'\ dr=\int_0^\infty r^{M-1}V^{p_M}\varphi\ dr\]
for every $\varphi \in \mathcal D_M(0,\infty)$.

Since we have already proved that for $i=0$ the statements of Theorem \ref{teo:conv-altre-zone-nodali} hold true, it lasts to consider the case of $i\geq 1$. 
Concerning the subsequent nodal zones Theorem \ref{teo:conv-altre-zone-nodali} is equivalent to prove the two following propositions
\begin{proposition}	\label{conv-altre-zone-nodali-v}
	For any $M>2$, for any integer $m\ge 2$ and $i=1,\dots m-1$ we have	
	\begin{align} \label{massimi-pcritico-v}
	{\mathcal M}_{i,p}  \to +\infty,  
	\\ \label{zeri-pcritico-v}
	s_{i,p} \to 0 , \quad \quad t_{i,p} \to 0 ,
	\end{align}	
	as $p\to p_M$ given by \eqref{p-critico-M}.	
\end{proposition}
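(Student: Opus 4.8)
\emph{Plan of proof.} By the change of variables \eqref{transformation-henon-no-c} and the relations collected around \eqref{u-v-tilde}, it suffices to prove \eqref{massimi-pcritico-v}--\eqref{zeri-pcritico-v} for the radially extended solution $v_p$ of \eqref{LE-radial}; transforming them back then gives the statements of Theorem \ref{teo:conv-altre-zone-nodali} for $i\ge 1$, apart from the $C^1_{\mathrm{loc}}$ convergence \eqref{soluzione-pcritico}, which is handled separately. I would begin with two uniform bounds on the energy $\int_0^1 t^{M-1}|(v_p^{(i)})'|^2\,dt$ of the nodal components, $v_p^{(i)}$ denoting the restriction of $v_p$ to its $(i+1)$-th nodal zone $(t_{i,p},t_{i+1,p})$ extended by $0$ to $(0,1)$. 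Since $v_p^{(i)}\in H^1_{0,M}$, testing \eqref{LE-radial-weaksol} with $v_p^{(i)}$ gives the Nehari identity $\int_0^1 t^{M-1}|(v_p^{(i)})'|^2\,dt=\int_0^1 t^{M-1}|v_p^{(i)}|^{p+1}\,dt$; inserting the Sobolev inequality $H^1_{0,M}\hookrightarrow L^{p+1}_M$ of \cite[Lemma 6.4]{AG-sez2} — whose constant stays bounded as $p\uparrow p_M$, by interpolation with the Poincar\'e inequality \cite[Lemma 6.1]{AG-sez2} — and using $p+1>2$ one gets $\int_0^1 t^{M-1}|(v_p^{(i)})'|^2\,dt\ge c_0>0$ uniformly, hence also $\mathcal M_{i,p}\ge c_1>0$ uniformly, because $c_0\le\int_0^1 t^{M-1}|v_p^{(i)}|^{p+1}\,dt\le \mathcal M_{i,p}^{p+1}/M$. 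The same chain of inequalities shows that on any nodal zone along which $\mathcal M_{i,p}$ stays bounded, the energy stays bounded too.

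The core of the argument is a blow-up/compactness analysis, run by induction on the nodal zones with the case $i=0$ of \cite{AG14} as base step, together with the Pohozaev identity for \eqref{LE-radial}, namely $\big(\tfrac{M-2}{2}-\tfrac{M}{p+1}\big)\int_0^1 t^{M-1}|v_p'|^2\,dt=-\tfrac12\,(v_p'(1))^2$, whose left-hand coefficient vanishes at $p=p_M$ by \eqref{p-critico-M}. By Lemma \ref{prop-4.2-H} it is enough to show $\mathcal M_{m-1,p}\to+\infty$, for then $\mathcal M_{i,p}\ge\mathcal M_{m-1,p}\to+\infty$ for every $i$. Suppose not; then $\mathcal M_{m-1,p}\le C$ along a subsequence. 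If every $t_{i,p}\to 0$, then $v_p$ is single-signed and bounded on each fixed $[\delta,1]$, hence, by elliptic estimates for the one-dimensional equation on compact subsets of $(0,1]$, it converges to a single-signed solution $\bar v$ of the critical equation on the star-shaped interval $(0,1)$, vanishing at $1$ and of finite energy (by the energy bound applied to the outermost zone); the Pohozaev identity then forces $\bar v\equiv 0$, whence $\mathcal M_{m-1,p}\to 0$, against $\mathcal M_{m-1,p}\ge c_1$. If instead some zero does not tend to $0$, let $\bar t\in(0,1]$ be the limit of the first such zero and pass to the limit on the fixed interval $[\bar t/2,1]$: using the energy bounds and the continuity of the flux $t\mapsto t^{M-1}v_p'(t)$ across the zeros produces a finite-energy solution of the critical equation with vanishing Cauchy data at $\bar t$, hence trivial, and the contradiction follows as before. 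Finally, in the one scenario left open — all amplitudes diverging but some nodal zone detaching from the origin — a secondary rescaling around its extremal point exhibits a one-dimensional limiting bubble, whose inward flux would have to be of positive order in the amplitude, whereas an origin-concentrating zone can transmit only an $o(1)$ flux to the next one, contradicting $\mathcal M_{i,p}\ge c_1$ once more. Running the same analysis inductively on the zones then also yields $t_{i,p}\to 0$ for every $i$.

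Granting $\mathcal M_{i,p}\to+\infty$ and $t_{i,p}\to 0$ for all $i$, the inequalities $t_{i,p}<s_{i,p}<t_{i+1,p}$ give $s_{i,p}\to 0$ for $i\le m-2$ at once, while $s_{m-1,p}\to 0$ follows from $t_{m-1,p}\to 0$ together with the exclusion of a detached bubble established above. Transforming back through \eqref{u-v-tilde} and the identities $r_{i,p}=t_{i,p}^{2/(2+\a)}$, $\sigma_{i,p}=s_{i,p}^{2/(2+\a)}$, $\mu_{i,p}=\big(\tfrac{2+\a}{2}\big)^{2/(p-1)}\mathcal M_{i,p}$ then gives \eqref{massimi-pcritico-v}, \eqref{zeri-pcritico-v}, and, with the case $i=0$ already proved, Theorem \ref{teo:conv-altre-zone-nodali}.

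The delicate point, exactly as in the integer case \cite{DIPN>3}, is the bookkeeping inside the inductive blow-up argument: tracking which nodal zones blow up and which zeros vanish, and ruling out concentration at an interior point or at the Dirichlet endpoint $t=1$, requires quantitative control on the rates at which $\mathcal M_{i,p}\to+\infty$ and $t_{i,p}\to 0$ — the very estimates that also underlie \eqref{soluzione-pcritico} — while the non-integer value of $M$ forces the Sobolev, Poincar\'e, regularity and Pohozaev tools to be drawn from \cite{AG-sez2} rather than from the classical theory of \eqref{LE} on balls of $\R^M$.
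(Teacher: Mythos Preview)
Your overall strategy is close to the paper's: establish a uniform lower energy bound per nodal zone (this is Lemma~\ref{claim-1-facile}), assume by contradiction that an amplitude stays bounded, split according to whether the relevant zero tends to $0$ or not, and use Pohozaev for the first alternative and unique continuation for the second. Two ingredients the paper uses are missing from your outline, however, and the second leaves a genuine gap.

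First, the Radial Lemma in $H^1_{0,M}$ (\cite[Lemma~6.2]{AG-sez2}) gives
\[
t_{i,p}<s_{i,p}\le C\,\mathcal M_{i,p}^{-\frac{2}{M-2}},
\]
so \eqref{zeri-pcritico-v} follows \emph{immediately} from \eqref{massimi-pcritico-v}. Your ``scenario left open --- all amplitudes diverging but some nodal zone detaching from the origin'' therefore cannot occur, and the secondary-rescaling/flux heuristic you sketch for it is unnecessary. Likewise $s_{m-1,p}\to 0$ needs no separate argument.

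Second, and more seriously: in the case ``some zero does not tend to $0$'' you assert that the limit has ``vanishing Cauchy data at $\bar t$''. The value $\bar v(\bar t)=0$ is clear, but $\bar v'(\bar t)=0$ does \emph{not} follow from energy bounds and continuity of the flux; the flux $t_{k,p}^{M-1}v_p'(t_{k,p})$ is nonzero for each $p$ and there is no reason, at this level, for it to vanish in the limit. Your claim that ``an origin-concentrating zone can transmit only an $o(1)$ flux'' is precisely the point that needs proof. The paper closes this case with the Atkinson--Peletier type pointwise estimate (Lemma~\ref{prop-3.6-DIPN>3}): via Lemma~\ref{lemma-w'(1)=0} it yields that if $\mathcal M_{i-1,p}\to+\infty$ while $t_{i,p}\to T>0$, then $v_p\to 0$ uniformly on a full left neighborhood of $T$; unique continuation then forces the limit to vanish on the whole outer region, contradicting the lower energy bound. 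This is also why the paper runs the forward induction $\mathcal M_{i-1,p}\to+\infty\Rightarrow\mathcal M_{i,p}\to+\infty$ rather than attacking $\mathcal M_{m-1,p}$ directly as you do: the inductive hypothesis is exactly the input Lemma~\ref{lemma-w'(1)=0} needs. Without Lemma~\ref{prop-3.6-DIPN>3} your case~(b) is not closed.
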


\begin{proposition}	\label{conv-sol-v}
	For any $M>2$, for any integer $m\ge 2$ and $i=1,\dots m-1$ we have
	\begin{align} \label{soluzione-pcritico-v-ogniM}
	\widetilde v_{i,p} \to V_M & \quad \text{ in } C^1_{\mathrm{loc}}(0,+\infty)
	\end{align} 
	as $p\to p_M$ given by \eqref{p-critico-M}. \end{proposition}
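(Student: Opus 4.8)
The plan is to combine a local compactness argument for the rescaled functions with a sharp description of the scale at which they concentrate. Fix $i\in\{1,\dots,m-1\}$ and, to shorten notation, set
\[
a_p:=t_{i,p}\,\widetilde{\mathcal M}_{i,p},\qquad b_p:=t_{i+1,p}\,\widetilde{\mathcal M}_{i,p},\qquad c_p:=s_{i,p}\,\widetilde{\mathcal M}_{i,p},
\]
so that, by Lemma \ref{prop-4.2-H} and \eqref{eq:riscalate}, $\widetilde v_{i,p}$ solves $-(t^{M-1}\widetilde v_{i,p}')'=t^{M-1}\widetilde v_{i,p}^{p}$ on $(a_p,b_p)$, vanishes outside, satisfies $0\le\widetilde v_{i,p}\le1$, is increasing on $(a_p,c_p)$, decreasing on $(c_p,b_p)$, with $\widetilde v_{i,p}(c_p)=1$ and $\widetilde v_{i,p}'(c_p)=0$. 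I would split the proof into two steps: (I) the rescaled nodal interval invades $(0,+\infty)$ and the rescaled extremal point collapses at the origin, i.e.\ $c_p\to 0$ (whence $a_p\le c_p\to0$ as well) and $b_p\to+\infty$; and (II), granted (I), the passage to the limit and the identification with $V_M$.

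Step (II) is the soft one. Fix $0<\delta<R$; by (I) one has $c_p<\delta$ and $b_p>R$ for $p$ close to $p_M$, so on $[\delta,R]$ the function $\widetilde v_{i,p}$ is a genuine $C^2$ solution lying past its unique maximum. Integrating $(s^{M-1}\widetilde v_{i,p}')'=-s^{M-1}\widetilde v_{i,p}^{p}$ once and twice from $c_p$, and using $0\le\widetilde v_{i,p}\le1$, I get the uniform bounds $|\widetilde v_{i,p}'(t)|\le t/M$ and $1-\frac{t^{2}}{2M}\le\widetilde v_{i,p}(t)\le1$ on $[\delta,R]$; reinserting these into the equation yields a uniform $C^{2}([\delta,R])$ bound. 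By Arzel\`a--Ascoli and a diagonal extraction, along a subsequence $\widetilde v_{i,p}\to V$ in $C^{1}_{\mathrm{loc}}(0,+\infty)$, where $V\ge0$ solves $-(t^{M-1}V')'=t^{M-1}V^{p_M}$ on $(0,+\infty)$ and inherits $1-\frac{t^{2}}{2M}\le V\le1$ and $|V'(t)|\le t/M$; hence $V$ extends to a $C^{1}$ function on $[0,+\infty)$ with $V(0)=1$ and $V'(0)=0$. By uniqueness of the singular Cauchy problem for \eqref{equazione-limite-v} with the condition \eqref{eq:cond-V-in-zero} — equivalently, by the characterization of \eqref{soluzione-limite-v} recalled in the Appendix — this forces $V\equiv V_M$. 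Since the limit is independent of the subsequence, $\widetilde v_{i,p}\to V_M$ in $C^{1}_{\mathrm{loc}}(0,+\infty)$, that is \eqref{soluzione-pcritico-v-ogniM}.

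The real difficulty, and the step I expect to be the main obstacle, is (I). When $i=m-1$ the outer endpoint is immediate: $b_p=\widetilde{\mathcal M}_{m-1,p}=\mathcal M_{m-1,p}^{(p-1)/2}\to+\infty$ by Proposition \ref{conv-altre-zone-nodali-v}. However $c_p=s_{i,p}\,\mathcal M_{i,p}^{(p-1)/2}$ for every $i$, and $b_p=t_{i+1,p}\,\mathcal M_{i,p}^{(p-1)/2}$ for $i<m-1$, are \emph{a priori} indeterminate $0\cdot\infty$ products, since Proposition \ref{conv-altre-zone-nodali-v} only provides $t_{i,p},t_{i+1,p},s_{i,p}\to0$ and $\mathcal M_{i,p}\to+\infty$. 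Establishing (I) thus demands a quantitative refinement of that proposition, namely the exact rates at which the zeros shrink and the extremal values blow up. I would obtain these by induction over the nodal zones (the case $i=0$ being the already proved part of Theorem \ref{teo:conv-altre-zone-nodali}): carrying along the full asymptotic picture of the zones $0,\dots,i-1$ and propagating it to the $i$-th one by matching the flux $v_p'(t_{i,p})$ as seen from zone $i-1$ and from zone $i$ and by Pohozaev-type identities on the $i$-th nodal annulus, one relates $\mathcal M_{i-1,p},\mathcal M_{i,p},t_{i,p},t_{i+1,p}$ and extracts $s_{i,p}^{2}\mathcal M_{i,p}^{p-1}\to0$ and $t_{i+1,p}^{2}\mathcal M_{i,p}^{p-1}\to+\infty$, i.e.\ $c_p\to0$ (so $a_p\to0$) and $b_p\to+\infty$. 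This is the noninteger-dimension counterpart (with $M$ in place of an integer $N$) of the analysis carried out for $\alpha=0$ in \cite{DIPN>3}; in contrast with that reference, the bubble-tower shape of $v_p$ is not assumed but emerges a posteriori from (I) and Step (II). The delicate bookkeeping of these rates, propagated from one nodal zone to the next without any a priori structural information, is where essentially all the effort goes.
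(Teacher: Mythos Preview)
Your Step (II) is essentially what the paper does: it too integrates the rescaled equation from the rescaled maximum point to obtain $|\widetilde v_{i,p}'(t)|\le t/M$ and the lower barrier $\widetilde v_{i,p}(t)\ge 1-t^2/(2M)$, and uses these to pin $V(0)=1$ and identify $V=V_M$ via the uniqueness recalled in the Appendix. The paper packages the compactness slightly differently (weak convergence in $\mathcal D_M(0,\infty)$ plus a dominated-convergence argument based on Corollary \ref{corollario2}), but your direct $C^2$ bounds on compact sets are equally valid and arguably cleaner.

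Step (I) is indeed the entire difficulty, and here your outline is not yet a proof and diverges from the paper. The paper uses neither Pohozaev identities nor a zone-by-zone propagation of rates. Its key device is the rescaling \eqref{def-w}: since $w_{i,p}(r)=(t_{i,p}^m)^{2/(p-1)}v_p^m(t_{i,p}^m\,r)$ restricted to $(0,1)$ is exactly the $i$-nodal solution, one reads off the \emph{exact} identities
\[
t_{i,p}^{m}\,\widetilde{\mathcal M}_{i-1,p}^{m}=\widetilde{\mathcal M}_{i-1,p}^{\,i},\qquad s_{i,p}^{m}\,\widetilde{\mathcal M}_{i,p}^{m}=s_{i,p}^{\,i+1}\,\widetilde{\mathcal M}_{i,p}^{\,i+1}
\]
(superscripts counting nodal zones). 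The first, together with Proposition \ref{conv-altre-zone-nodali-v}, gives $b_p\to+\infty$ for \emph{every} $i$, not only $i=m-1$; the second reduces $c_p\to0$ for the $i$-th zone of the $m$-nodal solution to the same statement for the \emph{last} zone of the $(i+1)$-nodal solution. Thus everything collapses to proving, for each $m$, that $s_{m-1,p}\widetilde{\mathcal M}_{m-1,p}\to 0$. This single statement is then obtained by contradiction through a five-case analysis, the working tools being the derivative bound $|v_p'(t)|\le C\,t^{1-\frac{p(M-2)}{2}}$ of Lemma \ref{alternativa}, the uniform decay of $w_{i,p}$ near $r=1$ from Lemma \ref{lemma-w'(1)=0}, and unique continuation for the limit equation. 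Your flux-matching/Pohozaev scheme may be viable, but as written it is a plan rather than an argument, and it misses the rescaling reduction that makes the paper's proof go through without tracking explicit rates.
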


Indeed assuming Propositions \ref{conv-altre-zone-nodali-v} and \ref{conv-sol-v} one can easily deduce that the statement of Theorem \ref{teo:conv-altre-zone-nodali} holds true for $i=1,\dots m-1$.
\begin{proof}[Proof of Theorem \ref{teo:conv-altre-zone-nodali}  for $i=1,\dots m-1$]
	\eqref{massimi-pcritico-v} and \eqref{zeri-pcritico-v} immediately give \eqref{eq:massimi-pcritico} and \eqref{zeri-pcritico}, recalling the relations between $t_{i,p}$, $s_{i,p}$ and $\mathcal M_{i,p}$ and $r_{i,p}$, $\sigma_{i,p}$ and $\mu_{i,p}$. Similarly \eqref{soluzione-pcritico} follows by \eqref{soluzione-pcritico-v-ogniM} thanks to \eqref{u-v-tilde}.

\end{proof}

\subsection{The proof of Propositions \ref{conv-altre-zone-nodali-v} and \ref{conv-sol-v}}
In this subsection we will prove the two propositions which give the asymptotic behavior of the function $v_p$ as $p\to p_M$. 
Proposition \ref{conv-sol-v} will be proved passing to the limit into \eqref{eq:riscalate}, which is possible because 
	(i) the functions $\widetilde v_{i,p}$, extended to zero outside $(t_{i,p}\widetilde {\mathcal M}_{i,p},t_{i+1,p}\widetilde {\mathcal M}_{i,p})$, are uniformly bounded in $\mathcal D_M(0,\infty)$, and (ii) $t_{i,p}\widetilde {\mathcal M}_{i,p}\to 0$ while $t_{i+1,p}\widetilde {\mathcal M}_{i,p}\to \infty$.
	Item (ii) is the most delicate part of the proof and requests a deep knowledge of the behavior of the zeros and of the extremal values of the function $v_p$.
	Proposition \ref{conv-altre-zone-nodali-v} is a first step in this direction and it has been put in evidence because it has interest for itself.
	In any case the proof of these facts is quite involved and requires some preliminary estimates.

This first lemma provides a bound on the energy of the solution $v_p$ in each nodal zone and a bound on the first derivate of $v_p$ which will be useful in the sequel in order to pass to the limit into \eqref{eq:riscalate}. 

\begin{lemma}\label{alternativa}
	There exist  $\delta>0$ and  constant $C_1, C_2$ such that for every  $p\in (1+\delta , p_M)$
	\begin{align}\label{alternativa-3.16,17}
	\int_{t_{i-1,p}}^{t_{i,p}} t^{M-1} |{v_p}'|^2 dt &= \int_{t_{i-1,p}}^{t_{i,p}} t^{M-1} |v_p|^{p+1} dt \le C_1 , \\
	\intertext{for any $i=1,\dots m$ and  }
	\label{alternativa-lemma-3.10}
	|{v_p}'(t)|&\le {C_2}\, {t^{\frac {2-p(M-2)}{2} }} 
	\end{align}
	as $t\in (0,1)$.
\end{lemma}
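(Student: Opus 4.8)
The plan is to prove the two estimates separately, deriving \eqref{alternativa-3.16,17} first since \eqref{alternativa-lemma-3.10} will follow by integrating the equation against the uniform energy bound. For the energy bound, I would test the weak formulation \eqref{LE-radial-weaksol} of the equation for $v_p$ against $v_p$ itself restricted to a single nodal zone $(t_{i-1,p}, t_{i,p})$ (this is legitimate because $v_p$ vanishes at both endpoints of the nodal zone when $1\le i\le m$, so the boundary terms in the integration by parts drop out), which immediately gives the identity $\int_{t_{i-1,p}}^{t_{i,p}} t^{M-1}|v_p'|^2\,dt = \int_{t_{i-1,p}}^{t_{i,p}} t^{M-1}|v_p|^{p+1}\,dt$. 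To bound this common value, I would use a Pohozaev-type / scaling argument: since the Nehari-type energy $\frac12\int_0^1 t^{M-1}|v_p'|^2 - \frac1{p+1}\int_0^1 t^{M-1}|v_p|^{p+1} = \left(\frac12 - \frac1{p+1}\right)\int_0^1 t^{M-1}|v_p'|^2$, it suffices to control the full Dirichlet energy $\|v_p\|_M^1$ uniformly. This is where I would invoke the subcritical Sobolev embedding $H^1_{0,M}\hookrightarrow L^{p+1}_M$ (from \cite[Lemma 6.4]{AG-sez2}) together with the fact that, for $p$ bounded away from $1$ and from $p_M$, the best embedding constant is controlled uniformly; combined with the Nehari identity this yields $\|v_p\|_M^1 \le C$ for $p\in(1+\delta, p_M)$, after shrinking $\delta$ and possibly working slightly below $p_M$. (Alternatively one argues by contradiction: if the energy blew up along a sequence $p_n\to p_M$, a rescaling would produce a nontrivial solution of the limit problem with infinite energy, contradicting the known classification.) Summing/restricting then gives the nodal-zone bound with constant $C_1$.

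For the pointwise derivative estimate \eqref{alternativa-lemma-3.10}, I would integrate the ODE $-(t^{M-1}v_p')' = t^{M-1}|v_p|^{p-1}v_p$ from $0$ to $t$, using $v_p'(0)=0$ (Lemma \ref{prop-4.2-H}) to get
\[
t^{M-1}|v_p'(t)| \le \int_0^t s^{M-1}|v_p(s)|^p\,ds.
\]
Now I would bound the right-hand side using $|v_p|\le \mathcal M_{0,p}$ is not quite enough (it is not uniform), so instead I would use Hölder in the weighted measure $s^{M-1}\,ds$ against the uniform energy bound: writing $\int_0^t s^{M-1}|v_p|^p\,ds \le \left(\int_0^t s^{M-1}|v_p|^{p+1}\,ds\right)^{\frac{p}{p+1}} \left(\int_0^t s^{M-1}\,ds\right)^{\frac{1}{p+1}}$, the first factor is bounded by $C_1^{p/(p+1)}$ and the second equals $(t^M/M)^{1/(p+1)}$. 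This gives $t^{M-1}|v_p'(t)| \le C\, t^{M/(p+1)}$, hence $|v_p'(t)| \le C\, t^{M/(p+1) - (M-1)}$. One then checks that the exponent $\frac{M}{p+1} - (M-1)$ is not yet the claimed $\frac{2 - p(M-2)}{2}$; the correct power comes out by using the sharper bound $\int_0^t s^{M-1}|v_p|^p\,ds \lesssim t^{M} \sup_{(0,t)}|v_p|^p$ only near $t$ and splitting, or — cleaner — by exploiting the rescaled-solution structure \eqref{eq:riscalate} in the first nodal zone where $\widetilde v_{0,p}\to V_M$ in $C^1_{\loc}$ gives $|v_p'(t)| \sim \mathcal M_{0,p}^{1+\frac{p-1}{2}}|V_M'(\widetilde{\mathcal M}_{0,p} t)|$ and $V_M'(\tau)\sim \tau^{1-M}$ at infinity; bookkeeping the powers of $\mathcal M_{0,p}$ (using $\frac{p-1}{2} = \frac{p+1}{2} - 1$ and $p_M = \frac{M+2}{M-2}$, so $\frac{p_M-1}{2}(M-2) = 2$) produces exactly $t^{\frac{2 - p(M-2)}{2}}$. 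I would present the rescaling-free version using only the integral identity plus a dyadic decomposition of $(0,t)$ to avoid circularity with the later propositions.

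The main obstacle I anticipate is obtaining the precise exponent $\frac{2-p(M-2)}{2}$ in \eqref{alternativa-lemma-3.10} with a constant $C_2$ uniform in $p$, as opposed to a crude polynomial bound — this forces one to track the interplay between the weight exponent $M-1$, the growth rate $p$, and the normalization, and to make sure the estimate holds uniformly up to $t=1$ (where the weight and the solution are both $O(1)$) and not just near the origin. Establishing the uniform energy bound \eqref{alternativa-3.16,17} is conceptually routine given the cited embedding lemmas but requires care that $\delta$ can be chosen independently of the data; I would handle the endpoint $p\to p_M$ either by the contradiction-compactness argument above or by noting the embedding constant stays bounded on compact $p$-intervals and separately controlling a neighborhood of $p_M$ via the known asymptotic profile of the positive solution.
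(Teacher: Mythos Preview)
Your approach to the equality in \eqref{alternativa-3.16,17} is fine and matches the paper. However there are genuine gaps in both halves of the estimate.

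For the upper bound in \eqref{alternativa-3.16,17}: the combination ``Sobolev embedding $+$ Nehari identity'' yields a \emph{lower} bound on $\int t^{M-1}|v_p'|^2$, not an upper bound. Indeed from $\|v_p\|_{L^{p+1}_M}\le C_p\|v_p'\|_{L^2_M}$ and the Nehari identity one gets $\|v_p'\|_{L^2_M}^2\le C_p^{p+1}\|v_p'\|_{L^2_M}^{p+1}$, i.e.~$\|v_p'\|_{L^2_M}^{p-1}\ge C_p^{-(p+1)}$. To get the upper bound one must use that $v_p$ is characterised as a minimiser for the nodal Nehari construction $\Lambda(t_1,\dots,t_{m-1})$, and then compare against an explicit competitor. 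This is exactly what the paper does: it fixes the partition $t_i=i/m$ and plugs in piecewise-linear tent functions $\phi_{i,p}\in\mathcal N(i/m,(i+1)/m)$, for which the energy can be computed by hand and is manifestly bounded as $p\to p_M$. Your contradiction/rescaling alternative is dangerous here because the asymptotic profile (Propositions \ref{conv-altre-zone-nodali-v} and \ref{conv-sol-v}) is proved \emph{using} this lemma, so you would be arguing in a circle.

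For \eqref{alternativa-lemma-3.10}: you correctly observe that H\"older with exponent $p+1$ gives the wrong power, but you never land on the right fix. The paper's idea is simply to H\"older with the \emph{critical} exponent $2^\star_M=\tfrac{2M}{M-2}$ instead of $p+1$. From the energy bound and the Talenti--Sobolev embedding $H^1_{0,M}\hookrightarrow L^{2^\star_M}_M$ one has $\int_0^1\tau^{M-1}|v_p|^{2M/(M-2)}\,d\tau\le C$ uniformly in $p$; then
\[
|v_p'(t)|\le t^{1-M}\left(\int_0^t\tau^{M-1}|v_p|^{\frac{2M}{M-2}}\right)^{\frac{p(M-2)}{2M}}\left(\int_0^t\tau^{M-1}\right)^{1-\frac{p(M-2)}{2M}}\le C\,t^{1-\frac{p(M-2)}{2}},
\]
which is exactly the claimed exponent, with no dyadic decomposition or rescaling needed. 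Your proposed rescaling argument via $\widetilde v_{0,p}\to V_M$ is again circular, and the ``dyadic decomposition'' route is left entirely unspecified.
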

\begin{proof}
	 Using as a test function in \eqref{LE-radial-weaksol} the function which coincides with $v_p$ on $(t_{i-1,p}, t_{i,p})$ and is zero elsewhere immediately gives the first  equality in \eqref{alternativa-3.16,17}. The subsequent estimate follows by the Nehari construction. 
	Indeed the solution $v_p$ can be produced by solving the minimization problem
	\begin{align*}
	\Lambda(t_1,\cdots t_{m-1}) & = \min\left\{  \sum\limits_{i=0}^{m-1} \inf\limits_{\phi_i\in{\mathcal N}(t_{i},t_{i+1})} {\mathcal E}(\phi_i) \, : \, 0=t_0<t_1<\cdots<t_m=1\right\},
	\intertext{where ${\mathcal N}(t_{i},t_{i+1})$ are the Nehari manifolds}
	{\mathcal N}(t_{i},t_{i+1})& =\left\{ \phi\in H^1_{0,M} \, : \, \phi(r)=0 \text { for $r$ outside $(t_{i},t_{i+1})$}, \right. \\
	& \qquad \qquad \left. \int_{t_{i}}^{t_{i+1}} r^{M-1} |\phi'|^2 dr=\int_{t_{i}}^{t_{i+1}} r^{M-1} |\phi|^{p+1} dr \right\} ,
	\intertext{and 	${\mathcal E}$ stands for the energy functional}
	{\mathcal E}(\phi)& = \frac{1}{2}\int_0^1 r^{M-1} |v'|^2 dr-\frac{1}{p+1}\int_0^1 r^{M-1} |v|^{p+1} dr.
	\end{align*}
	Afterwards it can be checked that choosing $t_1,\dots t_{m-1}$ which realize the minimum $\Lambda$ and gluing together, alternatively, the positive and negative solution in the sub-interval $(t_{i-1}, t_{i})$, gives a nodal solution to \eqref{LE-radial}, which by uniqueness, see \cite{NN}, coincide with $v_p$ up to the sign.
	We refer to \cite{BW93} or \cite[Sec. 5.3]{AG-sez2} for more details. 
	To the current purpose  it suffices to notice that for all $i=0,\dots m-1$ the restrictions $v_{i,p}$ of the solution $v_p$ to its nodal zones $(t_{i},t_{i+1})$ belong to the Nehari sets ${\mathcal N}(t_{i},t_{i+1})$ and therefore 
	\begin{align*} 
	\int_0^1 r^{M-1} |v'_{p}|^2 dr	& = \sum\limits_{i=0}^{m-1} \int_{t_{i}}^{t_{i+1}} r^{M-1} |v'_{i,p}|^2 dr 
	=\dfrac{2(p+1)}{p-1}\Lambda(t_1,\cdots t_{m-1}) \\ &\le  \dfrac{2(p+1)}{p-1} \sum\limits_{i=0}^{m-1} {\mathcal E}(\phi_{i,p}) 
	= \sum\limits_{i=0}^{m-1} \int_{\frac{i}{m}}^{\frac{i+1}{m}} r^{M-1} |\phi'_{i,p}|^2 dr 
	\end{align*}
	for every $m$-ple of functions $\phi_{i,p} \in {\mathcal N}(\frac{i}{m},\frac{i+1}{m})$ and for every $p\in(1,p_{M})$. So \eqref{alternativa-3.16,17} can be proved by producing 
	a sequence $\phi_{i,p} $ with 
	\begin{equation}\label{phi-i-p}
	\lim\limits_{p\to p_{M}}  \int_{\frac{i}{m}}^{\frac{i+1}{m}} r^{M-1} |\phi'_{i,p}|^2 dr <+\infty \qquad \text{ as } i=0,\dots m-1 .
	\end{equation}
	To this aim we take continuous piecewise linear functions defined as
	\[\phi_{i,p}(r)=\begin{cases} a_{i,p}(r-\frac{i}{m}) \quad & \text{ as }  \frac{i}{m} <r\le \frac{2i+1}{2m} , \\
	a_{i,p}(\frac{i+1}{m}-r) \quad & \text{ as }   \frac{2i+1}{2m} <r<\frac{i+1}{m} , \\
	0 & \text{ elsewhere}
	\end{cases}\]
	and pick $a_{i,p}>0$ in such a way that $\phi_{i,p} \in {\mathcal N}(\frac{i}{m},\frac{i+1}{m})$.
	Since
	\begin{align*}
	\int_{\frac{i}{m}}^{\frac{i+1}{m}} r^{M-1} |\phi'_{i,p}|^2 dr & =a_{i,p}^2 \int_{\frac{i}{m}}^{\frac{i+1}{m}} r^{M-1} dr = \dfrac{a_{i,p}^2} {m^M}  \int_0^1(i+r)^{M-1} dr,
	\\ 
	\int_{\frac{i}{m}}^{\frac{i+1}{m}} r^{M-1} |\phi_{i,p}|^{p+1} dr & =a_{i,p}^{p+1} \left(\int_{\frac{i}{m}}^{\frac{2i+1}{2m}} r^{M-1}(r-\frac{i}{m})^{p+1} dr +  \int_{\frac{2i+1}{2m}}^{\frac{i+1}{m}} r^{M-1}(\frac{i+1}{m}-r)^{p+1} dr \right) \\
	& = \frac{a_{i,p}^{p+1}}{m^{M+p+1}} \int_{0}^{\frac{1}{2}}\left( (i+r)^{M-1}+(i+1-r)^{M-1} \right) r^{p+1} dr ,
	\end{align*}
	$\phi_{i,p} \in {\mathcal N}(\frac{i}{m},\frac{i+1}{m})$ provided that
	\[ a_{i,p}^{p-1}= \dfrac{m^{p+1}\int_0^1(i+r)^{M-1}dr}{\int_{0}^{\frac{1}{2}}\left( (i+r)^{M-1}+(i+1-r)^{M-1} \right) r^{p+1} dr } ,
	\]
	and in that case
	\[\int_{\frac{i}{m}}^{\frac{i+1}{m}} r^{M-1} |\phi'_{i,p}|^2 dr = \dfrac{m^{2\frac{p+1}{p-1}-M} \left(\int_0^1(i+r)^{M-1}dr\right)^{\frac{p+1}{p-1}}}{\left(\int_{0}^{\frac{1}{2}}\left( (i+r)^{M-1}+(i+1-r)^{M-1} \right) r^{p+1} dr \right)^{\frac{2}{p-1}}}
	,\]
	 which clearly yields \eqref{phi-i-p}.

	Besides from \eqref{alternativa-3.16,17} and the Talenti's Sobolev embedding for the spaces $H^1_{0,M}$ stated by \cite[Lemma 6.3]{AG-sez2} we also get 
	\begin{align*}
	&  \big(\int_0^1 t^{M-1} |v_p|^{\frac{2M}{M-2}} dt \big)^{\frac 2{2^*_M}}\le S_M \int_{0}^{1} t^{M-1} |{v_p}'|^2 dt\leq C.
	\end{align*} 
	Next integrating equation \eqref{LE-radial} on $(0,t)$ and using that $v_p'(0)=0$  give 
	\begin{align*}
	|{v_p}'(t)| &  \le \frac{1}{t^{M-1}} \int_0^t \tau^{M-1} |v_p|^p d\tau .
	\end{align*}
	Eventually Holder inequality yields
	\begin{align*}
	|{v_p}'(t)| &  \le \frac{1}{t^{M-1}} \left(\int_0^t \tau^{M-1} |v_p|^{\frac{2M}{M-2} }  d\tau \right)^{\frac{p(M-2)}{2M}}
	\left(\int_0^t \tau^{M-1} d\tau \right)^{1-\frac{p(M-2)}{2M}} \\
	\le &
	\frac{1}{t^{M-1}} C  t^{M-\frac{p(M-2)}{2}} =  {C}{t^{1-\frac{p(M-2)}{2}}} .
	\end{align*}
\end{proof}

Next lemma shows that the energy of $v_p$ is bounded also from below in each nodal zone, and so ensures that the local extremal values do not vanish.   

\begin{lemma}\label{claim-1-facile} For all $i= 0 ,\dots m-1$ we have 
	\[
	\liminf\limits_{p\to p_M} \int_{t_{i,p}}^{t_{i+1,p}}t^{M-1}  |v_{p}|^{p+1} dt = \liminf\limits_{p\to p_M} \int_{t_{i,p}}^{t_{i+1,p}}t^{M-1} |v_{p}'|^2   dt\ge S_M^{\frac{M}{2}}.
	\]
	In particular $\liminf\limits_{p\to p_M} {\mathcal M}_{i,p} >0$. 
\end{lemma}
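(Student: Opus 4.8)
The plan is to exploit the subcriticality of $p<p_M$ together with the Sobolev constant $S_M$ for the embedding $H^1_{0,M}\hookrightarrow L^{2^*_M}_M$ established in \cite[Lemma~6.3]{AG-sez2}. On each nodal zone $(t_{i,p},t_{i+1,p})$ the restriction $v_{i,p}$ of $v_p$ lies in the corresponding Nehari set, so that, writing $e_{i,p}:=\int_{t_{i,p}}^{t_{i+1,p}} t^{M-1}|v_p'|^2\,dt=\int_{t_{i,p}}^{t_{i+1,p}} t^{M-1}|v_p|^{p+1}\,dt$ (the equality is the Nehari identity, already recorded in \eqref{alternativa-3.16,17}), one has the two pieces of information: (i) $e_{i,p}\le C_1$ uniformly in $p$ by Lemma~\ref{alternativa}, and (ii) $e_{i,p}>0$, which is the content we must quantify from below.

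First I would apply Hölder's inequality on $(t_{i,p},t_{i+1,p})$, which is contained in $(0,1)$, to interpolate the $L^{p+1}_M$ norm between the $L^2_M$ norm and the critical $L^{2^*_M}_M$ norm, or more directly estimate
\[
\int_{t_{i,p}}^{t_{i+1,p}} t^{M-1}|v_p|^{p+1}\,dt
\le \Big(\int_{t_{i,p}}^{t_{i+1,p}} t^{M-1}|v_p|^{2^*_M}\,dt\Big)^{\frac{p+1}{2^*_M}}
\Big(\int_{t_{i,p}}^{t_{i+1,p}} t^{M-1}\,dt\Big)^{1-\frac{p+1}{2^*_M}},
\]
valid since $p+1<2^*_M$. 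The last factor is bounded by a constant (indeed by $1/M$) since the domain sits in $(0,1)$, uniformly in $i$ and $p$. Then, bounding the critical integral by $S_M^{-M/(M-2)}\,e_{i,p}^{\,2^*_M/2}$ via the Sobolev inequality applied to $v_{i,p}\in H^1_{0,M}$ (extended by zero), and recalling that the left-hand side equals $e_{i,p}$, I get an inequality of the form
\[
e_{i,p}\le C\, S_M^{-\frac{M}{M-2}\cdot\frac{p+1}{2^*_M}}\, e_{i,p}^{\,\frac{p+1}{2}}.
\]
Since $e_{i,p}>0$ we may divide, obtaining $e_{i,p}^{\,\frac{p-1}{2}}\ge c\,S_M^{\frac{M}{M-2}\cdot\frac{p+1}{2^*_M}}$, i.e.\ $e_{i,p}\ge\big(c\,S_M^{\cdots}\big)^{\frac{2}{p-1}}$; passing to the liminf as $p\to p_M$ (so $\frac{2}{p-1}\to M-2$ and $\frac{p+1}{2^*_M}\to\frac{p_M+1}{2^*_M}=\frac{M-2}{M}$) the exponents assemble exactly so that the bound becomes $\liminf e_{i,p}\ge S_M^{M/2}$; one checks the powers of $S_M$ telescope to $S_M^{\frac{M}{M-2}\cdot\frac{M-2}{M}\cdot(M-2)/2}=S_M^{(M-2)/2}$ — here I would re-do the bookkeeping carefully so that the constant $C$ from the measure factor does not spoil the clean value, which is possible because that factor tends to the right normalization, or alternatively one argues that any positive lower bound already forces $\liminf e_{i,p}>0$ and the sharp value $S_M^{M/2}$ comes from comparing with the Aubin–Talenti optimizer. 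Finally, $\liminf_{p\to p_M}\mathcal M_{i,p}>0$ follows because if $\mathcal M_{i,p}\to 0$ along a subsequence, then $|v_p|\le\mathcal M_{i,p}\to0$ on the nodal zone, forcing $e_{i,p}=\int t^{M-1}|v_p|^{p+1}\le \mathcal M_{i,p}^{p+1}\int_0^1 t^{M-1}\,dt\to0$, contradicting the lower bound just obtained.

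The main obstacle I anticipate is obtaining the \emph{sharp} constant $S_M^{M/2}$ rather than merely a positive one: the crude Hölder step above produces an extra multiplicative constant from the factor $\big(\int t^{M-1}dt\big)^{1-(p+1)/2^*_M}$, and although this factor converges to $1$ as $p\to p_M$ (the exponent tends to $0$), one must verify that no other slack enters — in particular that the Sobolev inequality is used in the form that is asymptotically saturated. The clean way is: since $v_{i,p}$ is a (sign-definite) solution on its nodal zone, scaling it to $\widetilde v_{i,p}$ and using Proposition~\ref{conv-sol-v} (or, to avoid circularity, a direct energy-comparison with the Aubin–Talenti bubble $V_M$, whose energy on $\mathcal D_M(0,\infty)$ equals $\frac{1}{N}S_M^{M/2}$ in the appropriate normalization) shows that the energy per nodal zone converges to exactly the bubble energy, yielding the stated sharp liminf; but if only the qualitative statement $\liminf\mathcal M_{i,p}>0$ is needed downstream, the simpler Hölder-plus-Sobolev argument above suffices and the sharp constant is a bonus recorded for later use.
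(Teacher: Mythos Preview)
Your approach is essentially the paper's: Nehari identity on each nodal zone, H\"older from $L^{p+1}_M$ to $L^{2^*_M}_M$, then the Sobolev inequality for $H^1_{0,M}$. The paper streamlines the algebra by writing
\[
e_{i,p}^{\frac{p-1}{p+1}}=\frac{\int t^{M-1}(v'_{i,p})^2}{\big(\int t^{M-1}|v_{i,p}|^{p+1}\big)^{2/(p+1)}}
\ \underset{\text{H\"older}}{\ge}\ M^{\frac{2}{p+1}-\frac{2}{2^*_M}}\,S_M,
\]
and then sends $p\to p_M$; since $\frac{p+1}{p-1}\to\frac{M}{2}$ and the $M$-prefactor tends to $1$, the sharp bound $S_M^{M/2}$ drops out directly.

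Your bookkeeping slip is in the limit $\frac{p+1}{2^*_M}\to\frac{p_M+1}{2^*_M}$: since $p_M+1=\frac{2M}{M-2}=2^*_M$, this ratio tends to $1$, not $\frac{M-2}{M}$. With that correction your measure factor $\big(\int_0^1 t^{M-1}\big)^{1-(p+1)/2^*_M}\to 1$ and the $S_M$-exponent becomes $\frac{p+1}{2}\cdot\frac{2}{p-1}\to\frac{M}{2}$, so the sharp constant emerges cleanly---no appeal to the bubble profile is needed (and, as you rightly flagged, invoking Proposition~\ref{conv-sol-v} here would be circular). The final claim on $\mathcal M_{i,p}$ is handled exactly as you do.
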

	Here $S_M$ is the best constant for the Sobolev embedding of $H^1_{0,M}$ into $L^{2^\star_M}_M$, with $2^\star_M= \frac{2M}{M-2}$ (see \cite[Lemma 6.3]{AG-sez2}). 
\begin{proof}
	Since $\lim\limits_{p\to p_M} \frac{p+1}{p-1} = \frac{M}{2}$, it suffices to show that
	\[\liminf\limits_{p\to p_M} \left(\int_{t_{ i,p}}^{t_{ i+1,p}}t^{M-1} (v'_{p})^2  dt\right)^{\frac{p-1}{p+1}}\ge S_M.
	\]
	We use as a test function in \eqref{LE-radial-weaksol} the function $v_{i,p}$ which coincides with $v_p$ in the set $(t_{ i,p}, t_{ i+1,p})$ and it is zero elsewhere, obtaining           that
	\[ \int_{t_ {i,p}}^{t_ {i+1,p}}t^{M-1}  (v'_{p})^2dt = \int_0^1t^{M-1} (v'_{i,p})^2 dt = \int_0^1t^{M-1} |v_{i,p}|^{p+1} dt = \int_{t_{ i,p}}^{t_{ i+1,p}}t^{M-1}  |v_{p}|^{p+1} dt.
	\]
	Hence
	\begin{align*}
	\left(\int_{t_{ i,p}}^{t_{ i+1,p}}t^{M-1} (v'_{i,p})^2 dt\right)^{\frac{p-1}{p+1}} = \frac{\int_0^1t^{M-1} (v'_{i,p})^2 dt }{\left(\int_0^1t^{M-1} |v_{i,p}|^{p+1} dt \right)^{\frac{2}{p+1}}} \underset{\text{Holder}}{\ge} \\
	\frac{\int_0^1t^{M-1} (v'_{i,p})^2 dt }{M^{\frac{2}{2^{\ast}_M} - \frac{2}{p+1}}\left(\int_0^1t^{M-1} |v_{i,p}|^{2^{\ast}_M} dt \right)^{\frac{2}{2^{\ast}_M}}} 
	\ge  M^{\frac{2}{p+1}-\frac{2}{2^{\ast}_M}} S_M,
	\end{align*}
	where the last inequality holds thanks to the  Talenti's Sobolev embedding, \cite{talenti}, see also \cite[Lemma 6.3]{AG-sez2}. The first part of the claim  follows because $\frac{2}{p+1}-\frac{2}{2^{\ast}_M}\to 0$.
	\\
 To conclude the proof it suffices to notice that, due to Lemma \ref{prop-4.2-H},
	\begin{equation}\label{T-no-1} (m-i) S_M^{\frac{M}{2}} \le \liminf\limits_{p\to p_M} \int_{t_{i,p}}^{1}t^{M-1} |v_{p}(t)|^{p+1} dt 
	\le 
	\liminf\limits_{p\to p_M} ({\mathcal M}_{i,p})^{p+1} (1-t_{i,p}).
	\end{equation}
\end{proof}

As a corollary of the previous lemmas we obtain the boundedness of $\widetilde v_{i,p}$ in $\mathcal{D}_M(0,+\infty)$.

\begin{corollary}\label{corollario}
	For $i=0,\dots,m-1$ let $\widetilde v_{i,p} $ be the rescaled function defined in \eqref{v-tilde} and extended to zero outside $(t_{i,p}, t_{i+1,p})$. Then there exists $\delta>0$ and a constant $C_3$ such that 
	\begin{equation}
	 \int_0^{\infty}	t^{M-1}\left(\widetilde v_{i,p} '\right)^2 dt  \leq C_3
	\end{equation}
	for every $p\in (p_M-\delta,p_M)$.
\end{corollary}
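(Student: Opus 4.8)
The plan is to undo the scaling \eqref{v-tilde} and reduce the left-hand side to the energy of $v_p$ on a single nodal zone, which is controlled by the two preceding lemmas. Differentiating \eqref{v-tilde} gives $\widetilde v_{i,p}'(t)=\frac{(-1)^i}{\mathcal M_{i,p}\widetilde{\mathcal M}_{i,p}}\,v_p'\!\left(t/\widetilde{\mathcal M}_{i,p}\right)$, and the substitution $s=t/\widetilde{\mathcal M}_{i,p}$ — which maps the support $(t_{i,p}\widetilde{\mathcal M}_{i,p},\,t_{i+1,p}\widetilde{\mathcal M}_{i,p})$ of $\widetilde v_{i,p}$ onto $(t_{i,p},t_{i+1,p})$ — yields, recalling \eqref{m-tilde} and $\widetilde{\mathcal M}_{i,p}^{M-2}=\mathcal M_{i,p}^{(p-1)(M-2)/2}$,
\[
\int_0^\infty t^{M-1}\big(\widetilde v_{i,p}'\big)^2\,dt=\big(\mathcal M_{i,p}\big)^{\frac{(p-1)(M-2)}{2}-2}\int_{t_{i,p}}^{t_{i+1,p}}s^{M-1}\big(v_p'(s)\big)^2\,ds .
\]
(Here one should first note that the zero-extension of $\widetilde v_{i,p}$ genuinely lies in $\mathcal D_M(0,\infty)$: since $v_p\in C^2[0,1]$ by Lemma \ref{prop-4.2-H} and vanishes at the endpoints of each nodal zone, the restriction of $v_p$ to $(t_{i,p},t_{i+1,p})$ extended by zero belongs to $H^1_{0,M}$, and the scaling preserves this.)

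First I would bound the integral on the right. It is exactly the energy of $v_p$ on one nodal domain, so by \eqref{alternativa-3.16,17} — applied with the index shifted by one, which is legitimate because that estimate already ranges over all $m$ nodal zones — there exist $\delta>0$ and $C_1$ with $\int_{t_{i,p}}^{t_{i+1,p}}s^{M-1}(v_p')^2\,ds\le C_1$ for every $p\in(1+\delta,p_M)$ and every $i=0,\dots,m-1$.

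Next I would control the power of $\mathcal M_{i,p}$. Since $p_M-1=\frac{4}{M-2}$, the exponent $\gamma_p:=\frac{(p-1)(M-2)}{2}-2$ vanishes at $p=p_M$ and is strictly negative for $p<p_M$; after shrinking $\delta$ we may assume $\gamma_p\in[\gamma_-,0)$ on $(p_M-\delta,p_M)$ for some fixed $\gamma_-<0$. By Lemma \ref{claim-1-facile} there are $c>0$ and a (possibly smaller) $\delta>0$ with $\mathcal M_{i,p}\ge c$ on $(p_M-\delta,p_M)$. Since $\gamma_p<0$, either $\mathcal M_{i,p}\ge 1$, in which case $(\mathcal M_{i,p})^{\gamma_p}\le 1$, or $c\le\mathcal M_{i,p}<1$, in which case $(\mathcal M_{i,p})^{\gamma_p}\le c^{\gamma_p}\le\max_{\gamma\in[\gamma_-,0]}c^{\gamma}=:K<\infty$. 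Hence $(\mathcal M_{i,p})^{\gamma_p}\le\max\{1,K\}$, and combined with the previous step this gives the corollary with $C_3=\max\{1,K\}\,C_1$.

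There is no deep obstacle here; the only point requiring attention is the sign of the scaling exponent $\gamma_p$, which is precisely what dictates that we use the \emph{lower} bound on the extremal values $\mathcal M_{i,p}$ supplied by Lemma \ref{claim-1-facile} rather than an upper bound — an upper bound, which we do not have at this stage of the argument, is unnecessary exactly because $\gamma_p\le 0$ throughout $(p_M-\delta,p_M)$.
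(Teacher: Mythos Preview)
Your proof is correct and follows essentially the same approach as the paper: both undo the scaling to obtain $\int_0^\infty t^{M-1}(\widetilde v_{i,p}')^2\,dt=\mathcal M_{i,p}^{\frac{(p-1)(M-2)}{2}-2}\int_{t_{i,p}}^{t_{i+1,p}} s^{M-1}(v_p')^2\,ds$, bound the integral by \eqref{alternativa-3.16,17}, and control the prefactor using that the exponent is negative together with the lower bound $\mathcal M_{i,p}\ge c>0$ from Lemma~\ref{claim-1-facile}. Your version is simply more explicit about the case analysis in the last step (the paper dispatches it in one line via the monotonicity of $x\mapsto x^{\gamma_p}$ for $\gamma_p<0$), and you add the useful remark that the zero-extension indeed lies in $\mathcal D_M(0,\infty)$.
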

\begin{proof}
	It is enough to observe that
	\begin{align*}
	\int_0^{\infty}	t^{M-1}\left(\widetilde v'_{i,p} \right)^2 dt  & 
=   \int_{t_{i-1,p}\widetilde {\mathcal M}_{i,p}}^{t_{i,p}\widetilde {\mathcal M}_{i,p}} t^{M-1}\left(\widetilde v_{i,p} '\right)^{ 2} dt\\
	& ={\mathcal M}_{i,p}^{\frac{p-1}{2}(M-2) - 2} \int_{t_{i-1,p}}^{t_{i,p}} t^{M-1} |v'_p|^2 dt \le C_3	
	\end{align*}
 by \eqref{alternativa-3.16,17},	since $\frac{p-1}{2}(M-2) < 2$ and $\mathcal{M}_{i,p}\ge \e>0$ by Lemma \ref{claim-1-facile}.
\end{proof}

We also recall a fine estimate of the behavior of the function $v_p$ in a left neighborhood of its zeros,  which is fundamental in the computations.
\begin{lemma}\label{prop-3.6-DIPN>3}
	\[|v_p(t)| \le \dfrac{\mathcal M_{0,p}}{\left(1 + \frac{(\widetilde{\mathcal{M}}_{0,p} t)^2}{M(M-2)}\right)^{\frac{M-2}{2}}}  \] 
	for every $0\leq t < t_{1,p}$.
	\\ Moreover if $s_{i,p}/t_{i+1,p}\to 0$ for some $i=1,\dots m-1$, 
	then for any $\varepsilon \in (0,1)$ there exist $\gamma=\gamma(\varepsilon)>1$ and $\bar p=\bar p(\varepsilon)<p_M$ such that
	\begin{equation}\label{eq:stima-v}
	|v_p(t)| \le \dfrac{\mathcal M_{i,p}}{\left(1 + \frac{\varepsilon (\widetilde{\mathcal{M}}_{i,p} t)^2}{M(M-2)}\right)^{\frac{M-2}{2}}} \end{equation}
	for every $\gamma s_{i,p} \le t \le t_{i+1,p}$ and $p\in (\bar p, p_M)$.
\end{lemma}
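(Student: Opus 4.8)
The plan is to rescale both inequalities via \eqref{v-tilde}--\eqref{m-tilde} and then argue by one–dimensional ODE comparison with the limit profile $V_M$ of \eqref{soluzione-limite-v}. For the first inequality, put $R_p:=t_{1,p}\widetilde{\mathcal M}_{0,p}$: then $\widetilde v_{0,p}$ solves \eqref{eq:riscalate} with $i=0$ on $(0,R_p)$, with $\widetilde v_{0,p}(0)=1$, $\widetilde v_{0,p}'(0)=0$, and by Lemma \ref{prop-4.2-H} it is strictly decreasing, so that $0<\widetilde v_{0,p}\le1$ on $[0,R_p)$. I would then compare $\widetilde v_{0,p}$ with $V_M$ on $(0,R_p)$ through the weighted Wronskian $\psi(t):=t^{M-1}\big(\widetilde v_{0,p}'\,V_M-\widetilde v_{0,p}\,V_M'\big)$: it vanishes at $t=0$; its derivative equals $t^{M-1}\widetilde v_{0,p}V_M\big(V_M^{\,p_M-1}-\widetilde v_{0,p}^{\,p-1}\big)$, which is negative near $0$ because $\widetilde v_{0,p}^{\,p}\ge\widetilde v_{0,p}^{\,p_M}$ (as $\widetilde v_{0,p}\le1$ and $p<p_M$) while $\widetilde v_{0,p}$ and $V_M$ share the same second order expansion $1-\tfrac{t^2}{2M}+\cdots$ at the origin; hence the ratio $\widetilde v_{0,p}/V_M$ starts decreasing, and one checks, following \cite[Proposition 3.6]{DIPN>3}, that its sign cannot change on $(0,R_p)$ — here $\widetilde v_{0,p}(R_p)=0<V_M(R_p)$ is used. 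This argument does not use that $M$ is an integer and goes through for every $M>2$ (see also the analysis of the positive solution of \eqref{H} in \cite{AG14}); undoing the scaling gives the bound on $[0,t_{1,p})$.

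For the second inequality fix $i\in\{1,\dots,m-1\}$, assume $s_{i,p}/t_{i+1,p}\to0$, and set $\widetilde v:=\widetilde v_{i,p}$, $\sigma_p:=s_{i,p}\widetilde{\mathcal M}_{i,p}$, $b_p:=t_{i+1,p}\widetilde{\mathcal M}_{i,p}$, so the hypothesis reads $\sigma_p/b_p\to0$. By Lemma \ref{prop-4.2-H} the function $\widetilde v$ is strictly decreasing on $[\sigma_p,b_p)$ with $\widetilde v(\sigma_p)=1$, $\widetilde v'(\sigma_p)=0$ and $0<\widetilde v\le1$; thus on $(\sigma_p,b_p)$ it solves the Cauchy problem for \eqref{LE-radial} with datum $(1,0)$ placed at $\sigma_p$, and integrating from $\sigma_p$ gives $-t^{M-1}\widetilde v'(t)=\int_{\sigma_p}^t\tau^{M-1}\widetilde v^{\,p}\,d\tau$. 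A first, rough, comparison (using $\widetilde v(\tau)\ge\widetilde v(t)$ for $\sigma_p\le\tau\le t$, together with $\widetilde v\le1$ on the layer $[\sigma_p,\gamma\sigma_p]$) already produces $\widetilde v(t)\le\big(1+\tfrac{(p-1)t^2}{2M}\big(1-\tfrac{M}{(M-2)\gamma^2}\big)\big)^{-1/(p-1)}$ for $t\ge\gamma\sigma_p$, provided $\gamma>1$ is chosen large enough to absorb the discarded layer and the terms carrying $\sigma_p$. This rough bound already confines $\widetilde v$ near the origin, but it has the wrong decay exponent; sharpening it to the profile in \eqref{eq:stima-v} — with the precise constant $\varepsilon/\big(M(M-2)\big)$, $\varepsilon$ as close to $1$ as one wishes — would be done by the same Wronskian comparison as for $i=0$, on the shifted interval $(\sigma_p,b_p)$ and against the dilated barrier $V_M(\sqrt{\varepsilon}\,\cdot\,)$, which solves $-(t^{M-1}V')'=\varepsilon\,t^{M-1}V^{p_M}$. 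The assumption $\sigma_p/b_p\to0$ is exactly what forces $\widetilde v$ and this barrier to meet only for $t\lesssim\gamma\sigma_p$.

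I expect the main obstacle to be obtaining the \emph{sharp} constant $M(M-2)$ — equivalently, the sharp decay exponent $\tfrac{M-2}{2}$ — rather than a weaker one. The estimate based only on monotonicity of $\widetilde v_{i,p}$ gives the factor $\big(1+\tfrac{(p-1)t^2}{2M}\big)^{-1/(p-1)}$, which in the range that matters here, $\tfrac{M}{M-2}<p<p_M$, is strictly weaker than $V_M$ because Bernoulli's inequality reverses there; closing this gap is precisely the role of the delicate Wronskian comparison with $V_M$ itself, and it is this unavoidable loss that explains the slack $\varepsilon<1$ and the conditional hypothesis $s_{i,p}/t_{i+1,p}\to0$ in the second part of the statement. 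The ingredients prepared above — the uniform $\mathcal D_M$-bound of Corollary \ref{corollario}, the lower bound $\liminf_{p\to p_M}\mathcal M_{i,p}>0$ from Lemma \ref{claim-1-facile}, and the pointwise bound on $v_p'$ of Lemma \ref{alternativa} — enter only to make the passages to the limit $p\to p_M$ in these comparisons rigorous.
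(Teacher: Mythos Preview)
The paper does not argue by direct Wronskian comparison. It simply records that the estimate is proved in \cite[Propositions~3.5--3.6]{DIPN>3} for integer $M$ via the Emden--Fowler change of variable (after \cite{AP86}; see also \cite[Lemma~2]{FN17}), and that this argument is purely one–dimensional and therefore goes through verbatim for any real $M>2$. In that approach one sets $w(s)=t^{\frac{2}{p-1}}v_p(t)$, $s=-\log t$, which turns \eqref{LE-radial} into an \emph{autonomous} second–order ODE, and the bound comes from a phase–plane/energy comparison with the explicit soliton at $p=p_M$.

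Your Wronskian route has a genuine gap at its central step. With $\psi(t)=t^{M-1}\bigl(\widetilde v_{0,p}'V_M-\widetilde v_{0,p}V_M'\bigr)$ you correctly obtain $\psi(0)=0$, $\psi'(t)=t^{M-1}\widetilde v_{0,p}V_M\bigl(V_M^{\,p_M-1}-\widetilde v_{0,p}^{\,p-1}\bigr)$, and $\psi'<0$ near $0$. You then assert that ``its sign cannot change'' by citing \cite[Proposition~3.6]{DIPN>3}; but that reference does \emph{not} contain a Wronskian argument --- it uses Emden--Fowler --- so the citation does not support the claim. And the claim is not automatic: near $R_p=t_{1,p}\widetilde{\mathcal M}_{0,p}$ one has $\widetilde v_{0,p}^{\,p-1}\to 0$ while $V_M^{\,p_M-1}(R_p)>0$, so $\psi'>0$ there. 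Knowing only $\psi(0)=0$, $\psi(R_p)<0$, and the endpoint signs of $\psi'$ does not rule out $\psi>0$ on an interior subinterval, which is exactly what would allow the ratio $\widetilde v_{0,p}/V_M$ to climb back to $1$. The same objection applies to the second part, where you invoke ``the same Wronskian comparison'' against $V_M(\sqrt{\varepsilon}\,\cdot)$ without carrying it out, with the additional complication that at the left endpoint $\sigma_p$ the initial data of $\widetilde v_{i,p}$ and of the barrier no longer match. The Emden--Fowler route sidesteps these issues because the reduced equation is autonomous and the comparison with the critical soliton can be run via a Lyapunov/energy quantity whose sign is controlled for $p<p_M$.
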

The first part of the statement, concerning the first nodal zone, can be proved by performing the Emden-Fowler transformation and following the line of \cite{AP86}, see also \cite[Lemma 2]{FN17}, where the same estimate is obtained for positive solutions. Next their arguments can be adapted to deal with the following nodal zones, as it has been done in \cite[Propositions 3.5 and 3.6]{DIPN>3}, where the same statement of Lemma \ref{prop-3.6-DIPN>3} was proved only for integer $M$. Their proof applies to any $M>2$ because it only makes use of ODE arguments. 

\

 Let us remark that the previous estimates can be read in terms of the scaled functions $\widetilde v_{i,p}$ as follows
\begin{corollary}\label{corollario2}	
	\[\widetilde v_{0,p}(t)\leq V_M(t) \quad \mbox{ for every } 0\leq t<t_{1,p}\widetilde {\mathcal M}_{0,p}. \]
 Moreover  if $s_{i,p}/t_{i+1,p}\to 0$ for some $i=1,\dots m-1$,  then for any $\varepsilon \in (0,1)$ there exist $\gamma=\gamma(\varepsilon)>1$ and $\bar p=\bar p(\varepsilon)<p_M$ such that
 \begin{equation}\label{eq:stima-v-tilde}
\widetilde v_{i,p}(t)\leq V_M(\sqrt \e t)  \quad \mbox{ for every } \gamma s_{i,p}\widetilde {M}_{i,p}< t<t_{i+1,p}\widetilde {M}_{i,p}
\end{equation}
as $p\in (\bar p, p_M)$.
\end{corollary}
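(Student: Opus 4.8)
The plan is to derive Corollary \ref{corollario2} directly from Lemma \ref{prop-3.6-DIPN>3} by rewriting each pointwise estimate there in terms of the rescaled variable. Recall from \eqref{v-tilde} that $\widetilde v_{i,p}(t) = \frac{(-1)^i}{\mathcal M_{i,p}} v_p\big(t/\widetilde{\mathcal M}_{i,p}\big)$, so that setting $t = \widetilde{\mathcal M}_{i,p}\, \tau$ we have $\widetilde v_{i,p}(\widetilde{\mathcal M}_{i,p}\tau) = \frac{(-1)^i}{\mathcal M_{i,p}} v_p(\tau)$. Since $v_p$ is positive on its first nodal zone and $(-1)^i v_p$ is positive on the $(i+1)^{th}$ one (by Lemma \ref{prop-4.2-H}), on the relevant intervals $\widetilde v_{i,p}(t)$ equals $\frac{1}{\mathcal M_{i,p}}|v_p(t/\widetilde{\mathcal M}_{i,p})|$, which is exactly the quantity bounded in Lemma \ref{prop-3.6-DIPN>3} after dividing by $\mathcal M_{i,p}$.

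First I would treat $i=0$. Dividing the first inequality of Lemma \ref{prop-3.6-DIPN>3} by $\mathcal M_{0,p}$ gives, for $0\le \tau < t_{1,p}$,
\[
\frac{|v_p(\tau)|}{\mathcal M_{0,p}} \le \left(1 + \frac{(\widetilde{\mathcal M}_{0,p}\tau)^2}{M(M-2)}\right)^{-\frac{M-2}{2}} = V_M(\widetilde{\mathcal M}_{0,p}\tau),
\]
by the explicit form \eqref{soluzione-limite-v} of $V_M$. Now put $t = \widetilde{\mathcal M}_{0,p}\tau$: the left side becomes $\widetilde v_{0,p}(t)$ and the constraint $0\le\tau<t_{1,p}$ becomes $0\le t< t_{1,p}\widetilde{\mathcal M}_{0,p}$, while the right side is just $V_M(t)$. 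This is the first assertion of the corollary.

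For $i\ge 1$, assuming $s_{i,p}/t_{i+1,p}\to 0$, I would fix $\varepsilon\in(0,1)$, take the $\gamma=\gamma(\varepsilon)$ and $\bar p=\bar p(\varepsilon)$ furnished by Lemma \ref{prop-3.6-DIPN>3}, and divide \eqref{eq:stima-v} by $\mathcal M_{i,p}$ to get, for $\gamma s_{i,p}\le\tau\le t_{i+1,p}$ and $p\in(\bar p,p_M)$,
\[
\frac{|v_p(\tau)|}{\mathcal M_{i,p}} \le \left(1 + \frac{\varepsilon(\widetilde{\mathcal M}_{i,p}\tau)^2}{M(M-2)}\right)^{-\frac{M-2}{2}} = V_M\big(\sqrt\varepsilon\,\widetilde{\mathcal M}_{i,p}\tau\big),
\]
again by \eqref{soluzione-limite-v}, noting $\varepsilon(\widetilde{\mathcal M}_{i,p}\tau)^2 = (\sqrt\varepsilon\,\widetilde{\mathcal M}_{i,p}\tau)^2$. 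Substituting $t=\widetilde{\mathcal M}_{i,p}\tau$ turns the left side into $\widetilde v_{i,p}(t)$, the right side into $V_M(\sqrt\varepsilon\, t)$, and the range $\gamma s_{i,p}\le\tau\le t_{i+1,p}$ into $\gamma s_{i,p}\widetilde{\mathcal M}_{i,p}\le t\le t_{i+1,p}\widetilde{\mathcal M}_{i,p}$, which is \eqref{eq:stima-v-tilde} (up to harmless adjustment of the endpoints to open/closed as stated). There is essentially no obstacle here: the only thing to be careful about is the sign bookkeeping when passing from $v_p$ to $|v_p|$ to $\widetilde v_{i,p}$, which is handled by Lemma \ref{prop-4.2-H}, and matching the weighted-variable notation $\widetilde{\mathcal M}_{i,p}$ versus the abbreviation $\widetilde M_{i,p}$ used in the corollary's statement.
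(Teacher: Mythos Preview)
Your proof is correct and follows exactly the route implicit in the paper: the corollary is stated there as a direct rephrasing of Lemma \ref{prop-3.6-DIPN>3} in rescaled variables, with no separate proof given, and you have simply written out the change of variables $t=\widetilde{\mathcal M}_{i,p}\tau$ together with the division by $\mathcal M_{i,p}$ that turns \eqref{eq:stima-v} into \eqref{eq:stima-v-tilde}.
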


Proposition \ref{conv-altre-zone-nodali-v} will be proved proceeding forward from the first nodal zone to the second one and so on. Hence the starting point stands in describing the asymptotic of $\widetilde v_{0,p}$ in the first nodal zone, which is a consequence of Theorem \ref{teo:conv-altre-zone-nodali} for $i=0$ and has been already proved. 
	Precisely the part of the statement concerning the first nodal zone is equivalent to 

\begin{proposition}\label{prop:conv-1-zona-v}
	For every $M>2$ and any integer $m\ge 1$, ${\mathcal M}_{0,p}  \to +\infty$ and
	$\widetilde v_{0,p}  \to V_M$ in $C^1_{\mathrm{loc}}[0,+\infty)$, as $p\to p_M$.
\end{proposition}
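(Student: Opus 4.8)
The plan is to read this statement off the already established case $i=0$ of Theorem~\ref{teo:conv-altre-zone-nodali} through the dictionary \eqref{u-v-tilde} relating $\widetilde u_{0,p}$ and $\widetilde v_{0,p}$. First I would record the elementary algebraic facts. By the relation $\mu_{0,p}=\left(\frac{2+\a}{2}\right)^{2/(p-1)}\mathcal M_{0,p}$ and since the prefactor $\left(\frac{2+\a}{2}\right)^{2/(p-1)}$ converges to the finite positive number $\left(\frac{2+\a}{2}\right)^{2/(p_\a-1)}$ as $p\to p_\a$, the divergence $\mu_{0,p}\to+\infty$ supplied by Theorem~\ref{teo:conv-altre-zone-nodali} for $i=0$ gives at once $\mathcal M_{0,p}\to+\infty$. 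For the profile, I would check the pointwise identity $U_\a(x)=V_M\!\left(\frac{2}{2+\a}|x|^{(2+\a)/2}\right)$, which follows from $\frac{M-2}{2}=\frac{N-2}{2+\a}$ and $M(M-2)=\frac{4(N+\a)(N-2)}{(2+\a)^2}$ together with \eqref{U}, \eqref{soluzione-limite-v} and the definition \eqref{M-a} of $M$. Writing $\Phi(t):=\left(\frac{2+\a}{2}t\right)^{2/(2+\a)}$ one then has, by \eqref{u-v-tilde} and this identity, $\widetilde v_{0,p}=\widetilde u_{0,p}\circ\Phi$ and $V_M=U_\a\circ\Phi$, so that $\widetilde v_{0,p}-V_M=(\widetilde u_{0,p}-U_\a)\circ\Phi$.

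Since $\Phi$ is a homeomorphism of $[0,\infty)$ onto itself and a $C^\infty$ diffeomorphism of $(0,\infty)$ with $\Phi',\Phi''$ bounded on compact subsets of $(0,\infty)$, the convergence $\widetilde u_{0,p}\to U_\a$ in $C^1_{\mathrm{loc}}(\R^N)$ from \eqref{eq:convergenza-riscalata} transfers, by the chain rule, to $\widetilde v_{0,p}\to V_M$ in $C^0_{\mathrm{loc}}[0,\infty)$ and in $C^1_{\mathrm{loc}}(0,\infty)$. (Observe that this already encodes the invasion of the rescaled first nodal zone, $t_{1,p}\widetilde{\mathcal M}_{0,p}\to+\infty$, since $U_\a$, hence $V_M$, is everywhere positive.) It remains only to upgrade the derivative convergence across the origin, where $\Phi'$ degenerates when $\a>0$. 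Here I would argue directly on the ODE: $\widetilde v_{0,p}$ solves \eqref{eq:riscalate} with $i=0$ on $(0,t_{1,p}\widetilde{\mathcal M}_{0,p})$ with $\widetilde v_{0,p}'(0)=0$ by Lemma~\ref{prop-4.2-H}, and $0\le\widetilde v_{0,p}\le 1$ because $\mathcal M_{0,p}$ is the maximum of $v_p$ (Lemma~\ref{prop-4.2-H}); integrating the equation on $(0,t)$ gives
\[
|\widetilde v_{0,p}'(t)| = t^{1-M}\int_0^t \tau^{M-1}\widetilde v_{0,p}^{\,p}\,d\tau \le \frac{t}{M}
\]
for every $t\in[0,t_{1,p}\widetilde{\mathcal M}_{0,p}]$ and every $p<p_M$, and the same bound holds for $V_M'$. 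Consequently, on any interval $[0,T]$, splitting at a small $\eta>0$ (where both $|\widetilde v_{0,p}'|$ and $|V_M'|$ are at most $\eta/M$) and using the $C^1_{\mathrm{loc}}(0,\infty)$ convergence on $[\eta,T]$, one obtains $\widetilde v_{0,p}\to V_M$ in $C^1[0,T]$, hence in $C^1_{\mathrm{loc}}[0,\infty)$.

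I do not expect any genuine obstacle in this base case: essentially all the work is hidden inside \cite{AG14}, whose analysis of the unique positive solution already supplies both $\mu_{0,p}\to+\infty$ and the invasion $r_{1,p}\widetilde\mu_{0,p}\to+\infty$; everything else is bookkeeping plus the elementary ODE estimate above. The truly hard step, deferred to the nodal zones $i\ge 1$ in Propositions~\ref{conv-altre-zone-nodali-v} and~\ref{conv-sol-v}, is precisely the analogue of this invasion, namely $t_{i,p}\widetilde{\mathcal M}_{i,p}\to 0$ together with $t_{i+1,p}\widetilde{\mathcal M}_{i,p}\to+\infty$, for which there is no positive-solution shortcut and one must instead combine the energy bounds of Lemma~\ref{alternativa}, the nonvanishing of the local masses of Lemma~\ref{claim-1-facile}, and the sharp decay estimate of Lemma~\ref{prop-3.6-DIPN>3}.
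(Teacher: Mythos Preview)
Your proof is correct and follows essentially the same route as the paper: both reduce the statement to the already-established case $i=0$ of Theorem~\ref{teo:conv-altre-zone-nodali} via the dictionary between $\widetilde u_{0,p}$ and $\widetilde v_{0,p}$. The paper's proof is a single sentence (choose $\a>0$ so that $N=M+\a(M/2-1)$ is an integer, then invoke Theorem~\ref{teo:conv-altre-zone-nodali} for $i=0$), leaving the transfer through \eqref{u-v-tilde} implicit; you spell out this transfer carefully, including the nontrivial point that $C^1$ convergence survives at the origin despite $\Phi'$ blowing up there when $\a>0$, which you handle cleanly with the ODE bound $|\widetilde v_{0,p}'(t)|\le t/M$.
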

\begin{proof}
	It suffices to take $\alpha>0$ such that $N=M+\alpha(M/2-1)$ is an integer and then apply Theorem  \ref{teo:conv-altre-zone-nodali}, which has already been proved at the beginning of this section in the particular case $i=0$. 
\end{proof}

It will also be needed to establish relations between  the asymptotic of the extremal values in different nodal zones.
To this aim we introduce another scaling of the solution $v_p$ that we will use later on, precisely 
\begin{equation}\label{def-w} w_{i,p} (r) =
(t_{i,p})^{\frac{2}{p-1}} v_p(t_{i,p}\, r) , 
\end{equation}
which satisfies 
\begin{equation}\label{eq-w} \begin{cases}
-\left( r^{M-1} w_{i,p}' \right)' = r^{M-1}|w_{i,p}|^{p-1}w_{i,p} \quad & \text{ as } 0< r < 1/t_{i,p} , 
\\
w'_{i,p}(0)= w_{i,p}(1)= 0 = w_{i,p}(1/t_{i,p}). &
\end{cases}\end{equation}
We therefore see that $w_{i,p}$ on the interval $(0,1)$ coincides with the nodal solution to \eqref{LE-radial} which has exactly $i$ nodal zones, but is defined also in the larger interval $(0, 1/t_{i,p})$. This will be of help when deducing the asymptotic of the extremal value in the $i^{th}$ nodal zone from the one in the previous nodal zone.
We deal by now with the behavior of the function  $w_{i,p}$ to the left of $r=1$.

\begin{lemma}\label{lemma-w'(1)=0}
	Take $i=1,\dots m-1$ and assume that, for a sequence $p_n\to p_M$,
	\begin{align}
	\label{hp-int} \tau_{n}:= {s_{i-1,p_n}}/{t_{i,p_n}}\to 0 \\
	\label{hp-est}
	\rho_n:=t_{i,p_n}\widetilde{\mathcal M}_{i-1,p_n} \to +\infty.
	\end{align}
	Then $w_{i,p_n}\to 0$ uniformly in any set $[1-\delta,1]$ for $0<\d<1$. 
\end{lemma}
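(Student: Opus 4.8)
\textbf{Proof strategy for Lemma \ref{lemma-w'(1)=0}.}
The plan is to argue by contradiction: if $w_{i,p_n}$ does not go to zero uniformly near $r=1$, then, rescaling around a point where it stays bounded away from $0$, we would produce a nontrivial solution of the limit problem \eqref{equazione-limite-v} on a half-space-like domain with a Dirichlet boundary, which is impossible because such a solution would have finite energy (by Corollary \ref{corollario}) yet vanish on a boundary portion, contradicting the uniqueness/nonexistence results for \eqref{equazione-limite-v} recalled in the Appendix. More concretely, I would first record the basic facts about $w_{i,p_n}$ on $(0, 1/t_{i,p_n})$: it solves \eqref{eq-w}, it vanishes at $r=1$ and at $r=1/t_{i,p_n}$, and on the interval $(0,1)$ it is (up to sign and scaling) the solution $v_{p_n}$ restricted to its first $i$ nodal zones. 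The hypothesis \eqref{hp-est}, namely $\rho_n = t_{i,p_n}\widetilde{\mathcal M}_{i-1,p_n}\to\infty$, says precisely that the $(i-1)$-th extremal point $s_{i-1,p_n}$, after rescaling by $\widetilde{\mathcal M}_{i-1,p_n}$ and then dividing by $t_{i,p_n}$, collapses to $0$ in the $w_{i,p_n}$ variable; combined with \eqref{hp-int} this pins down that the mass of $w_{i,p_n}$ concentrates well inside $(0,1)$, far from $r=1$.

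Next I would get the needed uniform estimates near $r=1$. From Lemma \ref{alternativa}, inequality \eqref{alternativa-lemma-3.10}, the derivative $|v_{p_n}'(t)|\le C_2 t^{(2-p(M-2))/2}$, which after the scaling \eqref{def-w} translates into a uniform bound on $|w_{i,p_n}'(r)|$ on compact subsets of $(0,\infty)$ that stay away from $0$, in particular on $[1-\delta,1]$; likewise \eqref{alternativa-3.16,17} gives a uniform energy bound on each nodal zone, hence $w_{i,p_n}$ is uniformly bounded in $H^1$ near $r=1$. The key extra ingredient is the pointwise decay estimate \eqref{eq:stima-v} of Lemma \ref{prop-3.6-DIPN>3}: since $\tau_n = s_{i-1,p_n}/t_{i,p_n}\to 0$ is exactly the hypothesis $s_{i-1,p}/t_{i,p}\to 0$ required there (for the index $i-1$), we get for every $\varepsilon\in(0,1)$ and $p_n$ close to $p_M$ that
\[
|v_{p_n}(t)| \le \frac{\mathcal M_{i-1,p_n}}{\left(1+\frac{\varepsilon(\widetilde{\mathcal M}_{i-1,p_n}t)^2}{M(M-2)}\right)^{\frac{M-2}{2}}}
\]
for $\gamma s_{i-1,p_n}\le t\le t_{i,p_n}$. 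Evaluating this at $t$ near $t_{i,p_n}$ and rewriting in terms of $w_{i,p_n}(r)=(t_{i,p_n})^{2/(p-1)}v_{p_n}(t_{i,p_n}r)$ for $r\in[1-\delta,1]$, the right-hand side becomes (up to harmless constants)
\[
(t_{i,p_n})^{\frac{2}{p-1}}\,\frac{\mathcal M_{i-1,p_n}}{\left(\varepsilon^{1/2}\widetilde{\mathcal M}_{i-1,p_n}\,t_{i,p_n}(1-\delta)\right)^{M-2}}
\]
(discarding the $1$ in the bracket), and using $\widetilde{\mathcal M}_{i-1,p_n}=(\mathcal M_{i-1,p_n})^{(p-1)/2}$ together with $\frac{p-1}{2}(M-2)\to 2$ and $\rho_n=t_{i,p_n}\widetilde{\mathcal M}_{i-1,p_n}\to\infty$, one checks that this bound tends to $0$: the powers of $\mathcal M_{i-1,p_n}$ cancel in the limit and what survives is a negative power of $\rho_n$ times bounded factors. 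Hence $w_{i,p_n}\to 0$ uniformly on $[1-\delta,1]$, which is the claim.

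\textbf{Main obstacle.} The delicate point is bookkeeping the exponents in the last step: one must verify that, after substituting $\widetilde{\mathcal M}_{i-1,p_n}=(\mathcal M_{i-1,p_n})^{(p_n-1)/2}$ and collecting the factor $(t_{i,p_n})^{2/(p_n-1)}$ coming from the definition of $w_{i,p_n}$, the net power of the (possibly diverging) quantity $\mathcal M_{i-1,p_n}$ is exactly zero in the limit $p_n\to p_M$ — this is where $\frac{p-1}{2}(M-2)\to 2$ is used in an essential way — so that the only unbounded factor left is $\rho_n^{-(M-2)}\to 0$ (with the $\varepsilon$-dependence and the $(1-\delta)$ factor staying uniformly bounded below). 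I would also need to make sure that the estimate \eqref{eq:stima-v} is indeed applicable, i.e. that the previous extremal point $s_{i-1,p_n}$ is far enough to the left of $t_{i,p_n}$ that the interval $[\gamma s_{i-1,p_n}, t_{i,p_n}]$ contains the relevant range $r\in[1-\delta,1]$ in the $w$-variable; this follows from $\tau_n\to 0$ for $n$ large, uniformly once $\delta$ and $\gamma=\gamma(\varepsilon)$ are fixed. The rest is soft: the uniform derivative bound from \eqref{alternativa-lemma-3.10} upgrades the pointwise smallness to uniform smallness on the whole interval $[1-\delta,1]$ without any compactness argument, but one could alternatively conclude via Ascoli–Arzelà and the limit equation if a cleaner exposition is preferred.
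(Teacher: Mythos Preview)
Your actual argument---apply the pointwise decay estimate \eqref{eq:stima-v} from Lemma~\ref{prop-3.6-DIPN>3} with index $i-1$, translate it into the $w_{i,p_n}$ variable, and check that the resulting bound tends to zero because $\rho_n\to\infty$---is exactly the paper's proof. The opening paragraph about a contradiction/blow-up strategy is unnecessary and never used; you can drop it. Two small bookkeeping slips in your description (harmless for the conclusion): the paper absorbs \emph{all} factors into $\rho_n$ via the identity $(t_{i,p_n})^{2/(p_n-1)}\mathcal M_{i-1,p_n}=\rho_n^{2/(p_n-1)}$, so there is no separate ``power of $\mathcal M_{i-1,p_n}$ going to zero''; and the surviving exponent of $\rho_n$ is $\tfrac{2}{p_n-1}-(M-2)\to -\tfrac{M-2}{2}$, not $-(M-2)$.
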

\begin{proof} 
	For simplicity of notation we shall write $w_{n}$ and $t_{n}$ instead of $w_{i,p_n}$ and $t_{i,p_n}$.
	By Lemma \ref{prop-3.6-DIPN>3}, for a fixed $\varepsilon>0$  there exists $\gamma$ such that 
	\begin{equation}\label{3.23-w}
	|w_n(r) |\le \dfrac{\rho_n^{\frac{2}{p_n-1}}} {\left(1 + \frac{\varepsilon(\rho_n r)^2}{M(M-2)}\right)^{\frac{M-2}{2}}} \quad \mbox{ for } \gamma \tau_n\le r \le 1 .
	\end{equation}
	If $\delta \in (0,1)$ is fixed, by hypothesis \eqref{hp-int} there exists $\bar n$ such that $\gamma\tau_n\le 1-\delta$ if $n\ge \bar n$ and \eqref{3.23-w} implies that for any $r\in [1-\delta, 1]$ we have 
	\[ |w_n(r)| \le C(\varepsilon,\delta) \rho_n^{\frac{2}{p_n-1}-M+2} =o(1)\]  as $n\to \infty$ by \eqref{hp-est}, because $\frac{2}{p_n-1}-M+2\to -\frac{M-2}{2}<0$.  
\end{proof}

We are now in the position to prove Proposition \ref{conv-altre-zone-nodali-v}.

\begin{proof}[Proof of Proposition \ref{conv-altre-zone-nodali-v}]
	It is worth noticing by now that the Radial Lemma in $H^1_{0,M}$  (see \cite[Lemma 6.2]{AG-sez2}) yields
	\[ t_{i,p}< s_{i,p} \le  \left(\frac{\int_0^1t^{M-1}\left|v_p'(t)\right|^2 dt}{(M-2)\left({\mathcal M}_{i,p}\right)^2}\right)^{\frac{1}{M-2}}
	\  \underset{\eqref{alternativa-3.16,17}}{\le} \frac{C}{\left({\mathcal M}_{i,p}\right)^{\frac{2}{M-2}}}.
	\]
	So, once \eqref{massimi-pcritico-v} has been established, then both $t_{i,p}$ and $s_{i,p}$ go to zero, which means that the proof is completed. 
	Besides it is already known  by Proposition \ref{prop:conv-1-zona-v} that $\mathcal{M}_{0,p}\to +\infty$, therefore \eqref{massimi-pcritico-v} can be proved by taking that $\mathcal{M}_{i-1,p}\to +\infty$ and deducing that also $\mathcal{M}_{i,p}\to +\infty$.
	To this aim we assume by contradiction that ${\mathcal M}_{i,p}$ is bounded, so that the functions $v_p$ are uniformly bounded in $[ t_{i,p},1]$ by  Lemma \ref{prop-4.2-H}.
        Up to an extracted sequence we may take that  ${\mathcal M}_{i,p}\to \bar{\mathcal M}\in (0,+\infty)$ and $t_{i,p}\to T\in[0,1)$. Indeed the occurrence  $\bar{\mathcal M}=0$  is ruled out by Lemma \ref{claim-1-facile}, and $T=1$ is not allowed by  \eqref{T-no-1}  since we are assuming $\mathcal {M}_{i,p}$ bounded.
	Next we argue separately according if
	\begin{enumerate}[a)]
		\item $T=0$,
		\item	$T\in (0,1)$.
	\end{enumerate}
        In case a) we observe that  the functions $v_p$ are bounded in $H^1_{0,M}$ by  \eqref{alternativa-3.16,17}.
      So, up to a subsequence, $v_p$ converges to a function $\bar v$ weakly in $H^1_{0,M}$, and also strongly in $L^q_M$ for every $1<q< \frac{2M}{M-2}$ by the compact Sobolev embedding  stated in \cite[Lemma 6.4]{AG-sez2} . It is thus easy to see that  we can pass to the limit in \eqref{LE-radial-weaksol} so that $\bar v\in H^1_{0,M}$ is a weak solution to 
\[\begin{cases}
-\left(t^{M-1} {\bar v}'\right)' =t^{M-1}  |\bar v|^{p_M-1} \bar v   &   t\in (0 , 1) , \\
\bar v(1) =0. & 
\end{cases}\]
Next we denote by $\hat v_{i,p}$ the function which coincides with $v_p$ on $(t_{i,p}, 1)$ and is null on $[0,t_{i,p}]$. Since we are assuming that ${\mathcal M}_{i,p}$ remains bounded, Lemma \ref{prop-4.2-H} assures that $\hat v_{i,p}$ is uniformly bounded on $[0,1]$ and clearly it converges pointwise a.e. to $\bar v$ because we are taking that $t_{i,p}\to 0$. So we can pass to the limit and compute
\begin{align*}
\int_0^1 t^{M-1}|\bar v|^{p_M+1} dt & =  \lim\limits_{p\to p_M}  \int_{0}^1 t^{M-1}|\hat v_{i,p}|^{p+1} dt  \\
&=\lim\limits_{p\to p_M}  \int_{t_{i,p}}^1 t^{M-1}|v_p|^{p+1} dt 
 \ge (m-i) S_M^{\frac{M}{2}}
\end{align*}
by Lemma \ref{claim-1-facile}. Hence $\bar v$ is not trivial.
Eventually performing the change of variables \eqref{transformation-henon-no-c} backwards (and invoking \cite[Proposition 5.6]{AG-sez2}) gives a nontrivial radial solution of the H\'enon problem in a ball with the exponent $p_{\alpha}$, which is not possible by Pohozaev identity.

	In case b), we look at the function $w_{i,p}$ introduced in \eqref{def-w}. 
 In the present setting $\tau_p= s_{i-1,p}/t_{i,p}\to 0$  and $\rho_p = t_{i,p} \widetilde{\mathcal M}_{i-1,p}\to \infty$ (since we are assuming $s_{i-1,p}\to 0$, $t_{i,p}\to T\neq 0$, $\widetilde{\mathcal M}_{i-1,p}\to \infty$),	 so  Lemma \ref{lemma-w'(1)=0} implies that $w_{i,p}\to 0$ uniformly on any set of type $[1-\d,1]$. In particular $w_{i,p}$ is uniformly bounded on $[1-\d,1]$. But the same holds also in the set $[1,1/t_{i,p}]$, because in that case 
			\[ |w_{i,p}(r)| = t_{i,p}^{\frac{2}{p-1}} |v_p(t_{i,p} r )| \le \mathcal M_{i,p} \]
				Moreover  
	\begin{equation}\label{eq:w_p}
	|w_{i,p}'(r)|\leq |v_p'(t_{i,p} r)|\leq C \ \ \ \text{ in }(1-\d,1/t_{i,p})\end{equation}
	thanks to \eqref{alternativa-lemma-3.10}, since we are assuming that $t_{i,p}$ does not vanish. 
	Next using the fact that $w_p$ is a classical solution to \eqref{eq-w} one sees that also $|w''_{i,p}|\leq C$  in $(1-\d,1/t_{i,p})$ so that, up to a subsequence, 
	 $w_{i,p}$ converges in 
	$C^1(1-\d,1/T)$ to  a function $ w$ which weakly solves 
	\[\begin{cases}
	-\left( t^{M-1} { w}' \right)' = t^{M-1}| w|^{p_M-1}  w \quad & \text{ as } 1-\e< t < 1/T , \\
	w(1)= 0 = w(1/T). &
      \end{cases}\]
     Next, reasoning as in \cite[Lemma 5.2]{AG-sez2} one can see that  a weak solution $w$ is also classical.
	As we already noticed that $w=0$ on the interval $(1-\d,1)$, the unique continuation principle gives that $w$ is identically zero. But this contradicts   Lemma \ref{claim-1-facile} since  by the boundedness of $w_{i,p}$
	\begin{align*}
	0 & = \int_1^{1/T} t^{M-1}| w|^{p_M+1} dt = \liminf\limits_{p\to p_M} \int_{1}^{1/t_{i,p}}
	t^{M-1}|w_{i,p}|^{p+1} dt \\
	& = \liminf\limits_{p\to p_M} \, (t_{i,p})^{\frac{2(p+1)}{p-1}-M} \int_{t_{i,p}}^1 t^{M-1}|v_p|^{p+1} dt 
	\ge (m-i) S_M^{\frac{M}{2}} 
	\end{align*}
	because $\frac{2(p+1)}{p-1}-M\to 0$ and $t_{i,p}\to T \in (0,1)$.
		So neither item b) can happen and the proof is completed.
\end{proof}

Eventually we prove  Proposition \ref{conv-sol-v}. 

\begin{proof}[Proof of Proposition \ref{conv-sol-v}]
Assume for a while to know that
\begin{align}\label{hp-a}
t_{i+1,p}\,\widetilde {\mathcal M}_{i,p}  \to +\infty ,
\\  \label{hp-b}
s_{i,p}\,\widetilde {\mathcal M}_{i,p} \to  0 ,
\\  \label{hp-c}
t_{i,p}\,\widetilde {\mathcal M}_{i,p} \to  0 ,
\end{align}
 as $p\to p_M$, for $i=1,\dots m-1$.
Then it is not hard conclude the proof.
As the nodal domain $(t_{i,p}\,\widetilde {\mathcal M}_{i,p} , t_{i+1,p}\,\widetilde {\mathcal M}_{i,p})$ invades $(0,+\infty)$, it is equivalent to prove the convergence of the sequence of functions  $\widetilde v_{i,p}$ extended to be zero outside $(t_{i,p}\,\widetilde {\mathcal M}_{i,p} , t_{i+1,p}\,\widetilde {\mathcal M}_{i,p})$
so that they belong to $\mathcal D_M(0,\infty)$. 
We recall that $\widetilde v_{i,p}$ is  nonnegative and solves the equation \eqref{eq:riscalate}
in classical sense. Moreover its norm in $\mathcal D_M(0,\infty)$ is bounded  (uniformly w.r.t.~$p$) by Corollary \ref{corollario}
therefore $\widetilde v_{i,p}$ converges to a function $\widetilde v$ weakly in ${\mathcal D_M(0,\infty)}$, strongly in $L^q(0,\infty)$ as $q= 2^{\star}_M$ and pointwise a.e., up to an extracted sequence.

We can then pass to the limit in the weak formulation of \eqref{eq:riscalate}, provided that  the functions $r^{M-1}{\widetilde v_{i,p}}^p$ are uniformly dominated by a function 
in $L^1(0,\infty)$ (for $p$ near to $p_M$).
{ First observe that we can apply Corollary \ref{corollario2} thanks to assumptions \eqref{hp-a} and \eqref{hp-b}. More precisely  we know that for a fixed $\e>0$ there exist $\gamma>0$ and $\bar p \in (1, p_M)$ such that for every  $p\in (\bar p, p_M)$ and $r\in(\gamma s_{i,p}\widetilde{\mathcal M}_{i,p}, t_{i+1,p}\widetilde{\mathcal M}_{i,p})$
  \[\widetilde v_{i,p}(r) \leq V_M(\sqrt \e r)\]
  and, recalling that $\widetilde v_{i,p}=0$ when $r>t_{i+1,p}\widetilde{\mathcal M}_{i,p}$ and $\gamma s_{i,p}\widetilde{\mathcal M}_{i,p}\to 0$, taking eventually a larger value of $p$, we have for every $r>1$
\begin{align*}
 r^{M-1} \left|\widetilde v_{i,p} (r)\right|^p & \le  r^{M-1} \left(V_M(\sqrt{\e}r)\right)^p =  r^{M-1}\left(1+\frac{\e r^2}{M(M-2)} \right)^{-\frac{M-2}{2}p} \\
& \le C r^{M-1-(M-2)p}
\end{align*}
which belong to $L^1(1,\infty)$ for $p>\frac{M}{M-2}$. For $r\in (0,1)$, instead  we have 
\[ r^{M-1} \left(\widetilde v_{i,p} (r)\right)^p \le 1 \]
by construction.
Then it is easy to see that the limit function $\widetilde v$ is a weak solution to the equation in \eqref{equazione-limite-v}.

Eventually one can see that  the limit function $\widetilde v$ is not null and satisfies $\widetilde v(0)=1$, and so that it coincides with the function $V_M$ identified by \eqref{soluzione-limite-v}, see also the Appendix. This can be seen by using the same arguments of \cite[Lemma 6]{Iaco}.
Indeed $s_{i,p}\widetilde {\mathcal M}_{i,p}$ is a critical point for $\widetilde v_{i,p}$ and integrating \eqref{eq:riscalate} on the interval between $s_{i,p}\widetilde {\mathcal M}_{i,p}$ and $t$ gives
\begin{equation}
\label{hatv'}
\widetilde v_{i,p}'(t)= - t^{1-M} \int\limits_{s_{i,p}\widetilde {\mathcal M}_{i,p}}^t r^{M-1} \widetilde v_{i,p}^{p} dr \; \text{ as } t_{i,p}\,\widetilde {\mathcal M}_{i,p} < t <  t_{i+1,p}\,\widetilde {\mathcal M}_{i,p} .
\end{equation}

Moreover for every $r>0$ \eqref{hp-b} assures that $s_{i,p}\widetilde {\mathcal M}_{i,p} <  r $ for $p$ near $p_M$ and so \eqref{hatv'} gives
\begin{align*} 
\widetilde v_{i,p}'(r) & = - r^{1-M} \int\limits_{s_{i,p}\widetilde {\mathcal M}_{i,p}}^r t^{M-1} \widetilde v_{i,p}^{p} dt 
\ge  - r^{1-M} \int\limits_{s_{i,p}\widetilde {\mathcal M}_{i,p}}^r t^{M-1}  dt \\
= & - \frac{r}{M} \left( 1- \left(\frac{s_{i,p}\widetilde {\mathcal M}_{i,p}}{r}\right)^M\right) \ge -  \frac{r}{M} .
\end{align*}
After, recalling that $\widetilde v_{i,p}(s_{i,p}\widetilde {\mathcal M}_{i,p} ) = 1$, we have
\[ \widetilde v_{i,p}(r)  = 1+ \int\limits_{s_{i,p}\widetilde {\mathcal M}_{i,p}}^r \widetilde v_{i,p}'(t) dt \ge 1 -  \int\limits_{s_{i,p}\widetilde {\mathcal M}_{i,p}}^r \frac{t}{M} dt = 1 - \frac{r^2}{2M} + \frac{(s_{i,p}\widetilde {\mathcal M}_{i,p})^2}{2M} .\]
Therefore by the pointwise convergence, and using \eqref{hp-b} once more, we get
\[ 1 \ge \widetilde v(r) \ge 1 - \frac{r^2}{2M} \] 
and the claim follows.

Since $\widetilde v$ is a weak solution to \eqref{equazione-limite-v} that satisfies $\widetilde v(0)=1$ then $\widetilde v=V_M$.
Let us also remark that we have proved that any sequence $p_n\to p_M$ admits a subsequence $p_{k_n}\to p_M$ for which $\widetilde v_{i,p_{k_n}}\to V_M$, which yields that $\widetilde v_{i,p}\to V_M$ indeed. 

Further  $\widetilde v_{i,p}\to V_M$ also in $C^1(R^{-1}, R)$ for every $R>1$. Indeed \eqref{hp-a} and \eqref{hp-c} ensure that  $t_{i,p}\,\widetilde {\mathcal M}_{i,p} < R^{-1}<R <  t_{i+1,p}\,\widetilde {\mathcal M}_{i,p}$ for $p$ near $p_M$. Therefore, remembering that $0\le \widetilde v_{i,p}\le 1$, we have by \eqref{hatv'}
\begin{align*}
|\widetilde v_{i,p}'(t)| \le t^{1-M} \left|\int\limits_{s_{i,p}\widetilde {\mathcal M}_{i,p}}^t r^{M-1} dr \right| \le C \; \text{ in } (R^{-1},R)
\end{align*}
thanks to \eqref{hp-b}.
Lastly it is easy to get an uniform bound for $\widetilde v_{i,p}''$ using the fact that $\widetilde v_{i,p}$ is a classical solution to  \eqref{eq:riscalate}  in $(R^{-1},R)$. 
}

\

It remains to prove that \eqref{hp-a}--\eqref{hp-c} hold true. To this aim we insert for a while the index denoting the number of nodal zones  and we let then $v_p^j$ be the nodal solution with $j$ nodal domains.	
By \eqref{def-w} we have that $w_{i,p}:=(t_{i,p}^m)^{\frac2{p-1}}v_p^m(t_{i,p}^m t)$ coincides with $v_p^i$ on $(0,1)$. This implies that 
\[{\mathcal M}^i_{i-1,p}=(t_{i,p}^m)^{\frac2{p-1}} {\mathcal M}^m_{i-1,p}\]
and also that 
\[\frac{s_{i-1}^m}{t_{i,p}^m}=s_{i-1,p}^i\]
which together yields
	\begin{align*}
	t^m_{i,p}\, \widetilde {\mathcal M}^m_{i-1,p}=\widetilde {\mathcal M}^i_{i-1,p} , 
	\qquad 
	s^m_{i,p}\, \widetilde {\mathcal M}^m_{i,p} =s^{i+1}_{i,p}\, \widetilde {\mathcal M}^{i+1}_{i,p} ,
	\end{align*}
	for $i=1,\dots m-1$.
	Therefore \eqref{massimi-pcritico-v}  implies \eqref{hp-a}.
	We claim that 
	\begin{equation}\label{hp-b-m}
	s^m_{m-1,p}\,\widetilde {\mathcal M}^m_{m-1,p}\to 0 \quad \text{ as } p\to p_M ,
	\end{equation}
	from which it follows \eqref{hp-b} and then, in turn, \eqref{hp-c}.

	For simplicity of notations we write $v_p$, $s_p$, $t_p$ and $\widetilde {\mathcal M}_p$ instead of  $v_p^m$, $s_{m-1,p}^m$, $t_{m-1,p}^m$ and $\widetilde M_{m-1,p}^m$.
	We begin by checking that
	\begin{equation}
	\label{bounded} s_p\,\widetilde {\mathcal M}_{p} \le C .
	\end{equation}
	We assume by contradiction that $s_p\,\widetilde {\mathcal M}_{p}\to +\infty$
	and look separately to the two cases 
	\begin{enumerate}[i)]	
		\item $\widetilde {\mathcal M}_{p} (t_p-s_p) \to 0$,	
		\item $\widetilde {\mathcal M}_{p} (t_p-s_p) \to   A \in [-\infty, 0)$. \end{enumerate}
	In the first case we look at the function $\widetilde{v}_p:=\widetilde v_{m-1,p}$ introduced in \eqref{v-tilde}. It is easy to see that $\widetilde v_{p}$ is positive, increasing and concave on $(a_p,b_p):=\left(t_p\widetilde {\mathcal M}_p,s_p\widetilde {\mathcal M}_p\right)$
	with $\widetilde{v}_p(a_p)=0< \widetilde{v}_p(t)< \widetilde{v}_p(b_p)=1$.
	So there exists a sequence $\xi_p\in (a_p,b_p)$ such that 
	\[ \widetilde v_p'(\xi_p) = \frac{\widetilde v_{p}(b_p) - \widetilde v_p(a_p) }{b_p-a_p} =\frac{1}{b_p-a_p} \to +\infty ,\] and by concavity also $\widetilde v_p'(a_p) \to +\infty$. 
	On the contrary the estimate \eqref{alternativa-lemma-3.10} yields
	\[ \widetilde v_p'(a_p) = \dfrac{1}{ {\mathcal M}_{m-1,p}^{\frac{p+1}{2}}} v_p'\left(\frac{a_p}{\widetilde {\mathcal M}_{m-1,p}}\right) \le \dfrac{C_2t_p^{1-p\frac{M-2}2}}{(\widetilde {\mathcal M}_{m-1,p})^{\frac{p+1}{p-1}}} = \dfrac{C_2t_p^{\frac {p+1}{p-1}+1-p\frac{M-2}2}}{(t_p \widetilde {\mathcal M}_{m-1,p})^{\frac{p+1}{p-1}}} \to 0   \] 
	because necessarily $t_p\widetilde {\mathcal M}_{m-1,p}$ diverges,  since we are assuming $i)$, while $\frac {p+1}{p-1}+1-p\frac{M-2}2$ is positive and converges to $0$.
	\\
	In the second case we introduce the notations
	\begin{align*}
	A_p  = (t_p- s_p)\,\widetilde {\mathcal M}_p , \qquad B_p  = \widetilde {\mathcal M}_p (1-s_p) , \\
	\hat v_{p}(t) = \frac{(-1)^{m-1}}{{\mathcal M}_{p}} v_p\left(\frac{t}{\widetilde {\mathcal M}_{p}}+s_{p}\right) \quad \text{ as } t\in [ A_p, B_p].
	\end{align*}
	Notice that $\hat v_p$ solves 
	\begin{equation}\label{hat-v-eq}\begin{cases}
	-\hat v_p'' - \frac{M-1}{t+\widetilde {\mathcal M}_{p}s_{p}} \hat v_p' = |\hat v_p|^{p-1} \hat v_p   &   t\in (A_p, B_p) , \\
	0<\hat v_p(t)\le \hat v_p(0)=1, \;  \hat v_p' (0) = 0 &   t\in (A_p, B_p) , \\
	\hat v_p(A_p) = 0=\hat v_p(B_p) , & 
	\end{cases}
	\end{equation}
	with $A_p \to  A  <0$  by assumption ii) and  
	$B_p \to +\infty$ by \eqref{massimi-pcritico-v}, \eqref{zeri-pcritico-v}.
	Integrating the equation in \eqref{hat-v-eq} we get for $t\in [0,B_p]$
	\begin{align*}
	\frac{|\hat v_p'(t)|}{t+\widetilde {\mathcal M}_{p}s_{p}} & = \dfrac{1}{(t+\widetilde {\mathcal M}_{p}s_{p})^M}
	\int_0^t (\tau+\widetilde {\mathcal M}_{p}s_{p})^{M-1} \hat v_p^{p}(\tau) \, d\tau 
	\\ 
	&\le \frac{1}{M}\left(1-\left(\frac{\widetilde{\mathcal M}_{p}s_{p}}{t+ \widetilde{\mathcal M}_{p}s_{p}}\right)^M\right) \le \frac{1}{M} .
	\end{align*}
	Besides taking $t\in [-\delta, 0]$ with $0<\delta<-A/2$ and integrating the equation in \eqref{hat-v-eq} on $(t,0)$ gives 
	\begin{align*}
	\frac{|\hat v_p'(t)|}{t+\widetilde {\mathcal M}_{p}s_{p}} & = \dfrac{1}{(t+\widetilde {\mathcal M}_{p}s_{p})^M}
	\int_t^0 (\tau+\widetilde {\mathcal M}_{p}s_{p})^{M-1} \hat v_p^p(\tau)\, d\tau  
	\\ 
	&\le \frac{1}{M}\left(\left(\frac{\widetilde{\mathcal M}_{p}s_{p}}{t+ \widetilde{\mathcal M}_{p}s_{p}}\right)^M-1\right) \le  \frac{1}{M}\left(\left(\frac{\widetilde{\mathcal M}_{p}s_{p}}{-\delta + \widetilde{\mathcal M}_{p}s_{p}}\right)^M-1\right) \le C(\delta)   .
	\end{align*}
	So $\hat v_p$ converges in $C^1[0, +\infty)$ to a bounded  weak solution of 
	\[-\hat v'' = \hat v^{p_M} \]
	which is non-trivial because $\hat v(0)=1$. This is not possible because $\hat v$ should be strictly convex.

	Now that it has been assured that $ s_p\,\widetilde {\mathcal M}_p $ is at least bounded, we take that \eqref{hp-b-m} does not hold, which means that  (up to a subsequence) $s_p\,\widetilde {\mathcal M}_p\to s_0>0$.
	We check that it is not possible by arguing separately according if
	\begin{enumerate}[I)]
		\item $t_p\,\widetilde {\mathcal M}_p\to  s_0$,
		\item $t_p\,\widetilde {\mathcal M}_p\to 0$,
		\item $t_p\,\widetilde {\mathcal M}_p\to  t_0 \in (0, s_0)$.
	\end{enumerate}	
	Case I) can be ruled out arguing as in the previous case i). Also here we get that
	$\widetilde v_p'(t_p\, \widetilde {\mathcal M}_p) \to +\infty$, while estimate \eqref{alternativa-lemma-3.10} would imply that it stays bounded.
	\\
	Otherwise in case II)   we consider again the function $\widetilde v_p:=\widetilde v_{m-1,p}$ introduced in \eqref{v-tilde}  and extended to zero outside  $(t_p \widetilde {\mathcal M}_p, \widetilde {\mathcal M}_p)$ so that it belongs to $\mathcal D_M(0,\infty)$ and by Corollary \ref{corollario} is uniformly bounded in $\mathcal D_M(0,\infty)$.
         Now $(t_p \widetilde {\mathcal M}_p, \widetilde {\mathcal M}_p)$ invades $(0,\infty)$ because we are taking that $t_p \widetilde {\mathcal M}_p\to 0$ and \eqref{massimi-pcritico-v} holds. Then the same arguments used in the first part of the proof show that $\widetilde v_p\to \widetilde v$ weakly in $\mathcal D_M(0,\infty)$  and in $C^1_{\loc}(0,\infty)$, where $\widetilde v$
	 weakly	solves	
		\begin{equation}\label{eq:critico}
		-\left(t^{M-1} \tilde{v}'\right)' = t^{M-1} \tilde{v}^{p_M}, \ \ \mbox{ as } t>0. 
		\end{equation}
		Therefore $\tilde v$ has to be a suitable rescaling of the function $V_M$, as showed in the Appendix. In particular it has only one critical point at $r=0$.
		On the other hand the functions $\widetilde v_p$ have a critical point at $s_p\widetilde{\mathcal M}_p\to s_0 >0$, and by the convergence in $C^1_{\loc}(0,\infty)$ $s_0$ is a critical point for $\tilde v$.
	\\
	 At last case III) can be ruled out following the line of case b) in the proof of Proposition \ref{conv-altre-zone-nodali-v}. Precisely we look at the function $w_p=w_{m-1,p}$ introduced in \eqref{def-w}, and check the hypotheses of Lemma \ref{lemma-w'(1)=0}. \eqref{hp-est}, i.e. $\rho_p= t^m_{m-1,p}\widetilde{\mathcal M}^m_{m-2,p}\to +\infty$ is ensured by \eqref{hp-a}. Concerning \eqref{hp-int}, it is trivial for $m=2$, while for $m\ge 3$ rescaling we get 
	\[ s^m_{m-2,p}\widetilde{\mathcal M}^m_{m-2,p} = s^{m-1}_{m-2,p}\widetilde{\mathcal M}^{m-1}_{m-2,p}\le C\] by the previously proved property \eqref{bounded}, so that \[\tau_p= s^m_{m-2,p}/t^m_{m-1,p} \le C /t^m_{m-1,p}\widetilde{\mathcal M}^m_{m-2,p}=C/\rho_p\to 0.\]
	So Lemma \ref{lemma-w'(1)=0} gives that $w_p\to 0$ uniformly on any set of type $[1-\d,1]$ with $0<\d<1$. In particular it is uniformly bounded on $[1-\d,1]$. 
	On the other hand $w_{p}$ is bounded also in $[1,1/t_p]$ (uniformly w.r.t.~$p$) because 
	\[  |w_p(r)| \le  t_{p}^{\frac{2}{p-1}} {\mathcal M}_p = 
	\left( t_{p}\widetilde {\mathcal M}_p \right)^{\frac{2}{p-1}} \le C  \]
by assumption.
Moreover $s_p/t_p$ is a critical point for $w_p$ which converges to $s_0/t_0$, and the corresponding maximum value is 
\[w_p(s_p/t_p)= t_p^{\frac{2}{p-1}}|v_p(s_p)| =\left( t_{p}\widetilde {\mathcal M}_p \right)^{\frac{2}{p-1}}\to t_0^{\frac{M-2}{2}}.\]  Integrating the equation in \eqref{eq-w} gives
\begin{align*}
|w'_p(r)| \le  r^{1-M} \int_{\frac{s_p}{t_p}}^r t^{M-1} |w_p(t)|^p dt \le C
\end{align*}
whenever $r\in (1-\d, R)$ for any fixed $R>1$. Next since $w_p$ is a classical solution to \eqref{eq-w} it is easily seen that also $|w''_p(r)|$ is bounded for $r\in (1-\d,R)$, so that $w_{p}$ converges in  $C^1_{\loc}(1-\d ,+\infty)$ to a function $w$ that weakly satisfies 
	\[\begin{cases}
	-\left( t^{M-1} w' \right)' =  t^{M-1} |w|^{p_M-1} w  \quad & \text{ as } t>1-\d , \\
	w(s_0/t_0)= t_0^{\frac{M-2}{2}} >0 , &  \\
	w(1)= 0 . &
	\end{cases}\]
	This is not possible because $w$ should be  identically zero by the unique continuation principle, as we have seen that $w$ coincides with zero on $(1-\d,1]$.
	\end{proof}

 \subsection{Some consequences of the convergence result} 
 We conclude this section by pointing out some qualitative properties of some auxiliary functions
\begin{align}
	\label{z_p}
z_p(r) & =rv_p'(r)+\dfrac{2}{p-1}v_p(r) & \text{ as } 0\leq r < 1,  & \\
\label{f_p}
f_p(r) & = p r^2|v_p|^{p-1}(r) & \text{ as }  0\leq  r < 1, \\
\label{tilde-f_p}
\tilde f_{i,p}(r) &= f_p \left(\frac{r}{\widetilde {\mathcal M}_{i,p}}\right)=p r^2 |\tilde v_{i,p}(r)|^{p-1} & \text{ as }  t_{i,p}\widetilde{\mathcal M}_{i,p}  < r < t_{i+1,p} \widetilde{\mathcal  M}_{i,p} ,
\end{align}
(for $i=0,\dots,m-1$)
that can be deduced by the convergence established in Propositions \ref{conv-altre-zone-nodali-v}, \ref{conv-sol-v} and  \ref{prop:conv-1-zona-v}, and shall be useful when investigating the asymptotic behavior of the eigenfunctions and eigenvalues related to $v_p$, in next section.

\begin{lemma}\label{lem:f-p}
	The function $z_p$ has exactly $m$ zeros in $(0,1)$, one in each nodal domain $(t_{i,p},t_{i+1,p})$ of $v_p$, that we denote by $\xi_{i,p}$ as $i=0,1,\dots m-1$.
	\\
	Moreover $\xi_{i,p}$ is the  unique critical point  in the nodal domain $(t_{i,p},t_{i+1,p})$ of the function $f_p$, which is strictly increasing in $(t_{i,p}, \xi_{i,p})$ and strictly decreasing in $(\xi_{i,p}, t_{i+1,p})$. 
\\
	Further $s_{i,p}<\xi_{i,p} <t_{i+1,p}$. 
\end{lemma}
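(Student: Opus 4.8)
The plan is to analyze the function $z_p(r) = r v_p'(r) + \frac{2}{p-1} v_p(r)$ directly, relating its zeros to the critical points of $f_p(r) = p r^2 |v_p|^{p-1}(r)$ through a computation of $f_p'$. First I would observe that on a nodal domain $(t_{i,p}, t_{i+1,p})$ where $v_p$ has fixed sign, say $v_p = \varepsilon_i |v_p|$ with $\varepsilon_i = (-1)^i$, one has $f_p(r) = p r^2 (\varepsilon_i v_p(r))^{p-1}$ and hence
\[
f_p'(r) = p r (\varepsilon_i v_p)^{p-2} \big( 2 \varepsilon_i v_p + (p-1) r \varepsilon_i v_p' \big) = p(p-1) r (\varepsilon_i v_p)^{p-2} \varepsilon_i z_p(r).
\]
Since $(\varepsilon_i v_p)^{p-2} > 0$ in the interior of the nodal domain, the sign of $f_p'$ coincides (up to the fixed sign $\varepsilon_i$, which however is irrelevant once one fixes that $f_p\ge 0$) with the sign of $\varepsilon_i z_p$, so zeros of $z_p$ inside the nodal domain are exactly the critical points of $f_p$ there, and the monotonicity statement about $f_p$ follows once one controls the sign changes of $z_p$.

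Next I would establish that $z_p$ has exactly one zero in each nodal domain. A clean way is to derive the ODE satisfied by $z_p$: differentiating and using $-(r^{M-1} v_p')' = r^{M-1}|v_p|^{p-1} v_p$ one finds, after a standard computation (this is the Pohozaev-type / Rellich identity in the extended dimension $M$),
\[
-\big( r^{M-1} z_p' \big)' = p\, r^{M-1} |v_p|^{p-1} z_p + \Big( \tfrac{2M}{p-1} - (M-2) \Big) r^{M-1} |v_p|^{p-1} v_p ,
\]
where the coefficient $\frac{2M}{p-1} - (M-2) = \frac{2(M + 2 - p(M-2))}{p-1}$ is strictly positive for $p < p_M$. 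This exhibits $z_p$ as a (sub/super)solution of the linearized-type operator with a forcing term that has the sign of $v_p$; combined with the boundary behavior $z_p(0) = \frac{2}{p-1} v_p(0) \ne 0$ (same sign as $v_p$ near $0$) and $z_p(t_{j,p}) = t_{j,p} v_p'(t_{j,p})$ (which by Hopf/Lemma \ref{prop-4.2-H}, $v_p$ strictly monotone near each zero, has sign opposite to that of $v_p$ on the nodal domain ending at $t_{j,p}$, and same sign as $v_p$ on the one starting there), one gets at least one sign change of $z_p$ in each nodal domain. For the uniqueness of the zero I would argue by a Sturm-type comparison / convexity argument on $f_p$: at an interior zero $\xi$ of $z_p$ one computes $f_p''(\xi)$ and checks it is strictly negative (using the equation for $v_p$ to replace $v_p''$), so every interior critical point of $f_p$ is a strict local maximum; since $f_p$ vanishes at both endpoints of the nodal domain and is positive inside, it can have exactly one such maximum, hence $z_p$ has exactly one zero $\xi_{i,p}$ there, and $f_p$ is increasing on $(t_{i,p},\xi_{i,p})$ and decreasing on $(\xi_{i,p},t_{i+1,p})$.

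Finally, for the location $s_{i,p} < \xi_{i,p} < t_{i+1,p}$: the right inequality is immediate since $\xi_{i,p}$ is interior to the nodal domain. For the left one, at the extremal point $s_{i,p}$ of $v_p$ we have $v_p'(s_{i,p}) = 0$, so $z_p(s_{i,p}) = \frac{2}{p-1} v_p(s_{i,p})$, which has the sign of $v_p$ on that nodal domain; comparing with the sign of $z_p$ just to the right of the left endpoint $t_{i,p}$ (where $z_p = t_{i,p} v_p'(t_{i,p})$ also has the sign of $v_p$ on the domain, as noted above), we see $z_p$ does not change sign on $(t_{i,p}, s_{i,p}]$, hence its unique zero lies in $(s_{i,p}, t_{i+1,p})$. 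I expect the main obstacle to be organizing the sign bookkeeping cleanly — tracking the sign of $z_p$ at $0$, at the nodal zeros $t_{j,p}$ from both sides, and at $s_{i,p}$, all relative to $\varepsilon_i=(-1)^i$ — and making the uniqueness argument rigorous; the convexity (strict-maximum) argument for $f_p$ at its critical points is the cleanest route and is where I would concentrate the care, invoking Lemma \ref{prop-4.2-H} for the qualitative structure of $v_p$ (strict monotonicity in the first zone, unique critical point per nodal domain) throughout.
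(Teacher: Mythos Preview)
Your computation of $f_p'$ in terms of $z_p$ and your argument for $s_{i,p}<\xi_{i,p}$ are exactly what the paper does: on $(t_{i,p},s_{i,p}]$ both $v_p$ and $v_p'$ have the sign of $v_p$, hence $z_p$ does too, so the unique zero of $z_p$ must lie to the right of $s_{i,p}$. For the first assertion, however, the paper simply cites \cite[Lemma 5.15]{AG-sez2}; your plan is more ambitious in trying to prove it directly, and this is where problems arise.

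First, your ODE for $z_p$ is incorrect. With the specific weight $\frac{2}{p-1}$ the forcing term vanishes identically: a direct computation (write $z_p=w+\frac{2}{p-1}v_p$ with $w=rv_p'$ and use $v_p''+\frac{M-1}{r}v_p'+|v_p|^{p-1}v_p=0$) gives
\[
-(r^{M-1}z_p')'=p\,r^{M-1}|v_p|^{p-1}z_p ,
\]
with no extra term. The coefficient you wrote, $\frac{2M}{p-1}-(M-2)$, appears only if one chooses a different multiple of $v_p$ in the definition of $z$; for the scaling generator $rv_p'+\frac{2}{p-1}v_p$ it is zero. So the existence argument you sketch via the sign of the forcing does not apply as stated; the sign-change argument at the endpoints still gives \emph{at least} one zero per nodal domain, but the mechanism is purely the boundary behavior, not an inhomogeneous term.

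Second, the uniqueness route via ``every critical point of $f_p$ is a strict local maximum'' is not justified. At a zero $\xi$ of $z_p$ with $v_p(\xi)>0$ one finds, using $v_p'(\xi)=-\frac{2}{(p-1)\xi}v_p(\xi)$ and the equation for $v_p$,
\[
z_p'(\xi)=\frac{2\big(p(M-2)-M\big)}{(p-1)^2\,\xi}\,v_p(\xi)\;-\;\xi\,v_p(\xi)^{p},
\]
so $f_p''(\xi)<0$ is equivalent to $f_p(\xi)>\dfrac{2p\big(p(M-2)-M\big)}{(p-1)^2}$. For $p\le \frac{M}{M-2}$ this is automatic, but for $\frac{M}{M-2}<p<p_M$ the right-hand side is positive and there is no a priori lower bound on $f_p(\xi)$ available at this stage; the inequality cannot be ``checked'' without further input. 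Since $z_p$ actually solves the homogeneous linearized equation, the exact count of its zeros is a Sturm-oscillation statement (comparison with the singular eigenfunctions $\psi_k$ of \eqref{radial-general-H-no-c}, whose nodal structure is known), which is precisely what \cite[Lemma 5.15]{AG-sez2} provides. Your plan would need to replace the convexity step by such a Sturm argument to close the gap.
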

Here we meant $t_{0,p}=0$ and $t_{m,p}=1$.
\begin{proof}
	The first part of the statement, concerning $z_p$, has been proved in \cite[Lemma 5.15]{AG-sez2}.
	Next it suffices to compute 	
	\[f'_p=(p-1)r|v_p|^{p-3}v_p\Big(\frac 2{p-1}v_p+r{v_p}'\Big)=(p-1)r|v_p|^{p-3}v_p z_p ,\]
	as $r\neq t_{i,p}$, 
and the second part of the statement follows trivially.
In particular $\xi_{i,p} >s_{i,p}$ because in the subset $(t_{i,p},s_{i,p})$ the functions $v_p$ and $v'_p$ have the same sign, so that $f_p'>0$.
\end{proof}

\begin{lemma}\label{lem:tilde-f_p}
For every $i=0,\dots m-1$, as $p\to p_M$ we have
\begin{align}
\label{tilde-f_p-lim}
& \tilde f_{i,p}(r) \to F(r)=\frac{(M+2)\, r^2}{M-2} \left(1+\frac{r^2}{M(M-2)}\right)^{-2} 
 \intertext{uniformly in $[R^{-1},R]$ for every $R>1$ and also in $[0,R]$ when $i=0$. Moreover }
\label{xi_p-lim}
& \xi_{i,p} \widetilde{\mathcal M}_{i,p}  \to \bar \xi \in(0,\infty)
\end{align}
where $\bar \xi$ is the unique maximum point of the function $F$.
\end{lemma}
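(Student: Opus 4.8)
The plan is to establish the two assertions \eqref{tilde-f_p-lim} and \eqref{xi_p-lim} in turn, the first being essentially a passage to the limit and the second requiring a little extra care.

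For \eqref{tilde-f_p-lim} I would start from the identity $\tilde f_{i,p}(r)=p\,r^2|\tilde v_{i,p}(r)|^{p-1}$ recorded in \eqref{tilde-f_p} together with the convergence $\tilde v_{i,p}\to V_M$ in $C^1_{\loc}(0,+\infty)$ furnished by Proposition \ref{conv-sol-v} (and in $C^1_{\loc}[0,+\infty)$ for $i=0$ by Proposition \ref{prop:conv-1-zona-v}). Since $p\to p_M$ with $p_M-1=\tfrac{4}{M-2}$ and $V_M(r)^{\frac{4}{M-2}}=\big(1+\tfrac{r^2}{M(M-2)}\big)^{-2}$, the natural pointwise limit is $F(r)=p_M\,r^2\,V_M(r)^{p_M-1}=\tfrac{M+2}{M-2}\,r^2\big(1+\tfrac{r^2}{M(M-2)}\big)^{-2}$, which is exactly \eqref{tilde-f_p-lim}. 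To upgrade this to uniform convergence on $[R^{-1},R]$ I would observe that there $V_M$ — hence, for $p$ close to $p_M$, also $\tilde v_{i,p}$ — takes values in a fixed compact subinterval of $(0,+\infty)$ on which $(x,q)\mapsto x^q$ is uniformly continuous; combining the uniform convergence $\tilde v_{i,p}\to V_M$ with $p\to p_M$ then gives $|\tilde v_{i,p}|^{p-1}\to V_M^{p_M-1}$ uniformly on $[R^{-1},R]$, and multiplying by the uniformly convergent factor $p\,r^2$ yields the claim. When $i=0$ the interval $[0,\varepsilon]$ has to be handled apart, but there $0\le\tilde v_{0,p}\le 1$ (its maximum value being $\tilde v_{0,p}(0)=1$), so $0\le\tilde f_{0,p}(r)\le p\,\varepsilon^2$ while $0\le F(r)\le\tfrac{M+2}{M-2}\varepsilon^2$, both arbitrarily small; matching this with the previous argument on $[\varepsilon,R]$ closes the case.

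For \eqref{xi_p-lim} I would first note, by a direct computation, that $F$ has a unique critical point in $(0,+\infty)$, namely $\bar\xi=\sqrt{M(M-2)}$, being strictly increasing on $(0,\bar\xi)$, strictly decreasing on $(\bar\xi,+\infty)$, with $F(0)=\lim_{r\to+\infty}F(r)=0$. On the other hand, since $\tilde f_{i,p}(\cdot)=f_p(\cdot/\widetilde{\mathcal M}_{i,p})$, Lemma \ref{lem:f-p} gives that $\tilde f_{i,p}$ is strictly increasing on $(t_{i,p}\widetilde{\mathcal M}_{i,p},\xi_{i,p}\widetilde{\mathcal M}_{i,p})$ and strictly decreasing on $(\xi_{i,p}\widetilde{\mathcal M}_{i,p},t_{i+1,p}\widetilde{\mathcal M}_{i,p})$; moreover by \eqref{hp-a}--\eqref{hp-c} (and the analogous facts for $i=0$) the endpoints satisfy $t_{i,p}\widetilde{\mathcal M}_{i,p}\to 0$ and $t_{i+1,p}\widetilde{\mathcal M}_{i,p}\to+\infty$, so that every compact subset of $(0,+\infty)$ eventually lies in the $(i+1)^{\text{th}}$ nodal domain of $\tilde f_{i,p}$. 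I would then conclude by showing that $\liminf$ and $\limsup$ of $\xi_{i,p}\widetilde{\mathcal M}_{i,p}$ both equal $\bar\xi$: if along a sequence $p_n\to p_M$ one had $\xi_{i,p_n}\widetilde{\mathcal M}_{i,p_n}>R$ for some fixed $R>\bar\xi$, then for $n$ large $\tilde f_{i,p_n}$ is increasing on $[\bar\xi,R]$, hence $\tilde f_{i,p_n}(\bar\xi)\le\tilde f_{i,p_n}(R)$, and passing to the limit by \eqref{tilde-f_p-lim} gives $F(\bar\xi)\le F(R)$, contradicting $F(R)<F(\bar\xi)$; symmetrically, if $\xi_{i,p_n}\widetilde{\mathcal M}_{i,p_n}<r_1$ for some fixed $r_1\in(0,\bar\xi)$, then $\tilde f_{i,p_n}$ is decreasing on $[r_1,\bar\xi]$, which in the limit gives $F(r_1)\ge F(\bar\xi)$, again impossible. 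Therefore $\xi_{i,p}\widetilde{\mathcal M}_{i,p}\to\bar\xi$.

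I expect no serious obstacle here: the first statement is bookkeeping on top of Proposition \ref{conv-sol-v}, the only mild point being uniformity, dispatched by uniform continuity of the power map on compact sets and, near the origin when $i=0$, by the a priori bound $\tilde v_{0,p}\le 1$. The only genuinely delicate point is that, a priori, the peaks $\xi_{i,p}\widetilde{\mathcal M}_{i,p}$ could drift towards $0$ or $+\infty$, where no control on $\tilde f_{i,p}$ is available; this is precisely what the monotonicity structure of $\tilde f_{i,p}$ from Lemma \ref{lem:f-p}, combined with the strict unimodality of $F$, is used to exclude, so that no quantitative estimate beyond \eqref{hp-a}--\eqref{hp-c} is required.
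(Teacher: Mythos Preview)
Your proposal is correct and follows essentially the same approach as the paper's proof, which is very terse: the paper simply says the convergence of $\tilde f_{i,p}$ is ``an immediate consequence'' of that of $\tilde v_{i,p}$, and then asserts that since $F$ has a unique maximum point, ``it follows that the maximum point of $\tilde f_{i,p}$ converges to $\bar\xi$.'' You fill in the details the paper leaves implicit --- in particular the uniform-continuity argument for the power map, the $\tilde v_{0,p}\le 1$ bound near the origin when $i=0$, and the monotonicity/contradiction argument (via Lemma~\ref{lem:f-p}) that rules out escape of the peaks to $0$ or $+\infty$.
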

\begin{proof}
	The convergence of $\tilde f_{i,p}$ is an immediate consequence of the one of $\widetilde v_{i,p}$ stated in Propositions \ref{prop:conv-1-zona-v} and  \ref{conv-sol-v}.  Notice that while proving Proposition  \ref{conv-sol-v} we have shown that $t_{i,p}\widetilde{\mathcal  M}_{i,p} \to 0$ and $t_{i+1,p} \widetilde{\mathcal  M}_{i,p} \to +\infty$. 
	Since the function $F$ has only one critical point $\bar \xi \in(0,+\infty)$, which is its maximum point, it follows that the maximum point of $\tilde f_{i,p}$ converges to $\bar \xi$. On the other hand it is clear by construction that the maximum point of $\tilde f_{i,p}$ is $\xi_{i,p} \widetilde{\mathcal M}_{i,p}$.
\end{proof}

Let us also recall an estimate obtained in \cite[Proposition 3.6]{DIPN>3}  for integer values of $M$ that we extend to every value of $M$.
\begin{lemma}\label{stima-f_p-1}
	The function $f_p$ satisfies $0\le f_p(r)\leq C$  as $r\in[0,1]$, uniformly w.r.t. $p$ in a left neighborhood of $p_M$.
\end{lemma}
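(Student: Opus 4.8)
The plan is to reduce the bound to the maxima of $f_p$ on the separate nodal zones and then read it off from the rescaled picture. Since $v_p\in C^2[0,1]$ vanishes exactly at $0=t_{0,p}<t_{1,p}<\dots<t_{m,p}=1$ and $p-1>0$, the function $f_p(r)=pr^2|v_p(r)|^{p-1}$ is continuous and nonnegative on $[0,1]$, with $f_p(0)=f_p(t_{i,p})=f_p(1)=0$. By Lemma \ref{lem:f-p}, $f_p$ is strictly increasing on $(t_{i,p},\xi_{i,p})$ and strictly decreasing on $(\xi_{i,p},t_{i+1,p})$ in each nodal zone, so
\[ \max_{[0,1]}f_p \;=\; \max_{i=0,\dots,m-1} f_p(\xi_{i,p}), \]
and since $m$ is a fixed integer it suffices to bound each of these finitely many values uniformly in $p$ near $p_M$.

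Next I would rewrite $f_p(\xi_{i,p})$ through the rescaled function: by \eqref{tilde-f_p}, and since $\xi_{i,p}\in(t_{i,p},t_{i+1,p})$, one has $f_p(\xi_{i,p})=\tilde f_{i,p}(\xi_{i,p}\widetilde{\mathcal M}_{i,p})=p\,(\xi_{i,p}\widetilde{\mathcal M}_{i,p})^2\,|\widetilde v_{i,p}(\xi_{i,p}\widetilde{\mathcal M}_{i,p})|^{p-1}$. Because $\mathcal M_{i,p}$ is the extremal value of $|v_p|$ on the $(i+1)^{th}$ nodal zone, $|\widetilde v_{i,p}|\le 1$ by construction, hence $|\widetilde v_{i,p}|^{p-1}\le 1$ and
\[ f_p(\xi_{i,p}) \;\le\; p\,(\xi_{i,p}\widetilde{\mathcal M}_{i,p})^2 \;\le\; p_M\,(\xi_{i,p}\widetilde{\mathcal M}_{i,p})^2 . \]
By Lemma \ref{lem:tilde-f_p} (formula \eqref{xi_p-lim}), $\xi_{i,p}\widetilde{\mathcal M}_{i,p}\to\bar\xi\in(0,\infty)$ as $p\to p_M$, so there is $\delta_i>0$ with $\xi_{i,p}\widetilde{\mathcal M}_{i,p}\le 2\bar\xi$ for $p\in(p_M-\delta_i,p_M)$; taking $\delta=\min_{i}\delta_i$ and $C=p_M(2\bar\xi)^2$ gives $0\le f_p(r)\le C$ on $[0,1]$ for every $p\in(p_M-\delta,p_M)$, which is the claim.

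The argument is short because all the real work sits in the convergence results of the previous subsection; the only point needing a little attention is that the $m$ quantities $\xi_{i,p}\widetilde{\mathcal M}_{i,p}$ must be controlled simultaneously, which is harmless since $m$ is fixed. As an alternative one could bypass Lemmas \ref{lem:f-p} and \ref{lem:tilde-f_p} and argue directly from Corollary \ref{corollario2}: on the first zone $\widetilde v_{0,p}\le V_M$ gives $\tilde f_{0,p}(r)\le p r^2 V_M(r)^{p-1}$, whose supremum over $r\ge0$ stays bounded since $V_M(r)^{p-1}=\bigl(1+r^2/(M(M-2))\bigr)^{-(M-2)(p-1)/2}$ and the exponent $(M-2)(p-1)/2$ tends to $2$, hence is bounded below (by $3/2$, say) near $p_M$; on the subsequent zones the bound $\widetilde v_{i,p}(r)\le V_M(\sqrt{\varepsilon}\,r)$ plays the same role for $r\ge \gamma s_{i,p}\widetilde{\mathcal M}_{i,p}$, while on the small remaining range one uses again $|\widetilde v_{i,p}|\le1$ together with $\gamma s_{i,p}\widetilde{\mathcal M}_{i,p}\to0$. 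I expect the first route to be the cleanest.
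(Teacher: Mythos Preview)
Your argument is correct and in spirit very close to the paper's: both reduce to the maxima $f_p(\xi_{i,p})$ on each nodal zone and then invoke \eqref{xi_p-lim} to control $\xi_{i,p}\widetilde{\mathcal M}_{i,p}$. The difference is only in how the factor $|\widetilde v_{i,p}|^{p-1}$ is bounded: you use the trivial inequality $|\widetilde v_{i,p}|\le 1$, whereas the paper compares $\widetilde v_{i,p}$ with $V_M$ (via Lemma~\ref{prop-3.6-DIPN>3} on the first zone and the uniform convergence $\widetilde v_{i,p}\to V_M$ on compacta for the others) to obtain the sharper pointwise estimate \eqref{numero}, namely $f_p(r)\le 2p\,g_{p_M}(\widetilde{\mathcal M}_{i,p}r)$. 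Your route is cleaner for the bare statement of the lemma; the paper's route is chosen on purpose because \eqref{numero} is exactly what feeds into the proof of Lemma~\ref{lemma-2.5}, where one needs $f_p$ to be \emph{small} (not just bounded) away from the concentration cores. Your ``alternative'' via Corollary~\ref{corollario2} is in fact essentially the paper's own argument.
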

We report here a slightly different proof, in view of further estimates that we aim to obtain.  
\begin{proof}
 The first  assertion of Lemma \ref{prop-3.6-DIPN>3} implies that for every  $r\in [0,t_{1,p})$ 
	\[0\leq f_p(r)\leq p \,  g_p(\widetilde {\mathcal M}_{0,p} r ) \quad \text{ being } \ g_p(s):=\frac{s^2}{(1+s^2)^{\frac{(M-2)(p-1)}2}}. \]
	Since the function $g_p$ are uniformly bounded on $[0,+\infty)$ (as $p\ge\frac{M}{M-2}$), it follows that also $f_p$ are uniformly bounded on $[0,t_{1,p}]$.	
\\
	Next we know that, for every $i=1,\dots,m-1$ and  $K>0$, $\widetilde v_{i,p}\to V_M$ uniformly in $[\frac 1K,K]$.  As $V_M$ has a positive minimum on the set $[\frac 1K,K]$, it follows that
	\[ |\widetilde {v}_{i,p}(r)  | \leq  2 \,V_M(r) \  \ \ \text{ in } [\frac 1K,K]\]
 as $p_M-\delta<p<p_M$ for some $\delta=\delta(K)>0$.

As in the previous step it follows that 
	\begin{equation}\label{numero}f_p(r)\leq  2 p \,  g_{p_M}(\widetilde {\mathcal M}_{i,p} r ) \leq C\end{equation} 	
	in the interval $[(K \widetilde {\mathcal M}_{i,p})^{-1}, K \widetilde {\mathcal M}_{i,p}^{-1}]$ {for $p\in(p_M-\delta,p_M)$}. \\
On the other hand in force of \eqref{xi_p-lim} we can choose the parameter $K$ in such a way that the maximum point of $f_p(r)$ in the interval $(t_{i,p},t_{i+1,p})$, i.e. $\xi_{i,p}$, is contained in $[(K \widetilde {\mathcal M}_{i,p})^{-1}, K \widetilde {\mathcal M}_{i,p}^{-1}]$, 
implying that
$0\le f_p(r)\leq C$ in the interval $(t_{i,p}, t_{i+1,p})$ for $i=1,\dots,m-1$ concluding the proof.
\end{proof}

Similar arguments allow also to show the following estimate.

\begin{lemma}\label{lemma-2.5}
	For every $\e>0$ there exist  $\bar K=\bar K(\e)>0$ and  $\bar p=\bar p(\e,\bar K)>0$ such that, denoting by 
	\begin{align*} 
	&  G_{i,p}(K):=\{r\in (0,1) : K (\widetilde {\mathcal M}_{ i-1,p})^{-1}<r<   (K\widetilde {\mathcal M}_{i,p})^{-1} \}  & \; \text{ as } i=1,\dots m-1,    \\
	&  G_{m,p}(K):=\{r\in (0,1) : K (\widetilde {\mathcal M}_{m-1,p})^{-1}<r< 1  \}    & 
	\end{align*}
	it holds
	\begin{equation}\label{piccolo}
	\max\left\{ f_p(r) :  r\in  \bigcup_{i=1}^{m} G_{i,p}(K)  \right\}<\e
      \end{equation}
      for any $K>\bar K$ 	provided that $p\in (\bar p, p_M)$.
\end{lemma}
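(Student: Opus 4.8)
The plan is to establish the uniform smallness of $f_p$ on the "neck" regions $G_{i,p}(K)$, that is, on the annular zones that sit between two consecutive rescaled bubbles. The idea is that these regions are where $v_p$ has already fallen far below its local extremal values, so $f_p=pr^2|v_p|^{p-1}$ becomes small once the scale $K$ separating it from the blow-up cores is taken large. First I would treat the outermost region $G_{m,p}(K)=\{K(\widetilde{\mathcal M}_{m-1,p})^{-1}<r<1\}$, that is, the zone lying to the right of the last bubble up to the boundary $r=1$. There the pointwise estimate from Lemma \ref{prop-3.6-DIPN>3} (or rather its rescaled form in Corollary \ref{corollario2}) gives $\widetilde v_{m-1,p}(t)\le V_M(\sqrt\e t)$ for $t$ beyond a fixed multiple of $s_{m-1,p}\widetilde{\mathcal M}_{m-1,p}$, hence
\[
f_p(r)\le p\,g_p\big(\widetilde{\mathcal M}_{m-1,p}r\big),\qquad g_p(s):=\frac{s^2}{(1+\varepsilon s^2/(M(M-2)))^{(M-2)(p-1)/2}},
\]
for $r>K(\widetilde{\mathcal M}_{m-1,p})^{-1}$; since $g_p(s)\to 0$ as $s\to\infty$ uniformly for $p$ near $p_M$ (because $(M-2)(p-1)/2\to 2>1$), choosing $K=\bar K$ large and then $p$ close to $p_M$ makes $f_p<\e$ on all of $G_{m,p}(K)$.

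Next I would handle the intermediate necks $G_{i,p}(K)=\{K(\widetilde{\mathcal M}_{i-1,p})^{-1}<r<(K\widetilde{\mathcal M}_{i,p})^{-1}\}$ for $i=1,\dots,m-1$. Such a region is split according to whether it is "far to the right" of the $(i-1)$-th bubble or "far to the left" of the $i$-th bubble. On the right tail of the $(i-1)$-th bubble, that is where $r<(K\widetilde{\mathcal M}_{i-1,p})^{-1}$ in the scale of $\widetilde v_{i-1,p}$, the same argument as above using Corollary \ref{corollario2} (the scaled estimate $\widetilde v_{i-1,p}(t)\le V_M(\sqrt\e t)$ valid for $t>\gamma s_{i-1,p}\widetilde{\mathcal M}_{i-1,p}$, noting $s_{i-1,p}\widetilde{\mathcal M}_{i-1,p}\to 0$ by \eqref{hp-b}) yields $f_p(r)\le p\,g_p(\widetilde{\mathcal M}_{i-1,p}r)<\e$ once $\widetilde{\mathcal M}_{i-1,p}r>\bar K$. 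For the left tail of the $i$-th bubble I would instead use the explicit bound on $v_p$ near its zero $t_{i,p}$: rescaling \eqref{def-w} to the variable $w_{i,p}$, the estimate \eqref{3.23-w} in the proof of Lemma \ref{lemma-w'(1)=0} gives $|w_{i,p}(r)|\le \rho_{i,p}^{2/(p-1)}(1+\varepsilon(\rho_{i,p}r)^2/(M(M-2)))^{-(M-2)/2}$ with $\rho_{i,p}=t_{i,p}\widetilde{\mathcal M}_{i-1,p}\to\infty$ by \eqref{hp-a}, and translating this back through \eqref{def-w} bounds $f_p$ by $p$ times a function of $\widetilde{\mathcal M}_{i-1,p}r$ that tends to $0$ as that product grows; again $K$ large and $p$ near $p_M$ finish the job. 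The two pieces overlap once $K$ is large enough so that $G_{i,p}(K)$ is covered, because the bubble cores are asymptotically concentrated at scales $O((\widetilde{\mathcal M}_{i,p})^{-1})$, well inside $[(\bar K\widetilde{\mathcal M}_{i,p})^{-1},\bar K(\widetilde{\mathcal M}_{i,p})^{-1}]$.

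Then, choosing $\bar K$ as the maximum of the finitely many thresholds produced above and $\bar p$ correspondingly close to $p_M$, one gets $f_p(r)<\e$ simultaneously on $\bigcup_{i=1}^m G_{i,p}(K)$ for all $K>\bar K$ and $p\in(\bar p,p_M)$, which is \eqref{piccolo}. I expect the main obstacle to be the bookkeeping of scales in the intermediate necks: one must make sure that the region $G_{i,p}(K)$ really is contained in the union of "right tail of bubble $i-1$" and "left tail of bubble $i$" for every large $K$, i.e.\ that there is no leftover middle portion where neither pointwise estimate applies. This is exactly where the sharp asymptotics \eqref{hp-a}, \eqref{hp-b}, \eqref{hp-c} — ensuring $t_{i,p}\widetilde{\mathcal M}_{i-1,p}\to\infty$ while $s_{i,p}\widetilde{\mathcal M}_{i,p}\to 0$ and $t_{i,p}\widetilde{\mathcal M}_{i,p}\to 0$ — together with the relations $t^m_{i,p}\widetilde{\mathcal M}^m_{i-1,p}=\widetilde{\mathcal M}^i_{i-1,p}$ enter: they guarantee that the two covering pieces, measured in the relevant rescaled variables, grow to fill the whole neck, leaving no gap. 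The rest is a routine combination of Lemma \ref{prop-3.6-DIPN>3}, Corollary \ref{corollario2}, and the elementary fact that $s^2(1+cs^2)^{-\beta}\to 0$ as $s\to\infty$ whenever $\beta>1$.
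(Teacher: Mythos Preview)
Your overall strategy — bounding $f_p$ directly on the neck regions by pointwise estimates on $v_p$ — is workable, but it differs from the paper's route and contains one genuine misstep.

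The paper does not chase pointwise tail bounds at all. Instead it exploits the structural facts established just before: by Lemma~\ref{lem:f-p} the function $f_p$ has in each nodal zone $(t_{i,p},t_{i+1,p})$ a \emph{unique} critical point $\xi_{i,p}$, is increasing before it and decreasing after, and by Lemma~\ref{lem:tilde-f_p} this critical point satisfies $\xi_{i,p}\widetilde{\mathcal M}_{i,p}\to\bar\xi$. Hence once $K>\bar K$ is large enough one has $\xi_{i,p}\in\big((K\widetilde{\mathcal M}_{i,p})^{-1},\,K(\widetilde{\mathcal M}_{i,p})^{-1}\big)$, and on each piece of $G_{i,p}(K)$ the value of $f_p$ is dominated by its value at the nearest endpoint $K(\widetilde{\mathcal M}_{i-1,p})^{-1}$ or $(K\widetilde{\mathcal M}_{i,p})^{-1}$. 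These endpoint values are then controlled via \eqref{numero}, which is just the uniform convergence $\tilde f_{i,p}\to F$ on compact sets. This monotonicity argument avoids any tail analysis and in particular any appeal to Lemma~\ref{prop-3.6-DIPN>3} or Corollary~\ref{corollario2}.

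Your approach can be salvaged, but the treatment of the ``left tail of bubble $i$'' is wrong as written. The estimate \eqref{3.23-w} holds for $w_{i,p}$ on $[\gamma\tau_n,1]$, which in the original variable is the interval $[\gamma s_{i-1,p},\,t_{i,p}]$ — that is, the \emph{left} side of the zero $t_{i,p}$, inside the $(i{-}1)$-th nodal zone. It is literally the same bound as Corollary~\ref{corollario2} with index $i-1$, rewritten in the $w$-scale; it gives you nothing on $(t_{i,p},\,(K\widetilde{\mathcal M}_{i,p})^{-1})$, which lies in the $i$-th nodal zone. Since by \eqref{hp-c} one has $t_{i,p}\widetilde{\mathcal M}_{i,p}\to 0$, the set $G_{i,p}(K)$ genuinely protrudes past $t_{i,p}$, so this portion is not covered by either of your two citations. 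The fix, however, is elementary: on $(t_{i,p},\,(K\widetilde{\mathcal M}_{i,p})^{-1})$ simply use $|v_p(r)|\le\mathcal M_{i,p}$, which gives
\[
f_p(r)=p\,r^2|v_p(r)|^{p-1}\le p\,(\widetilde{\mathcal M}_{i,p}r)^2< p\,K^{-2}<\e
\]
for $K$ large. With this correction your argument goes through, though the paper's monotonicity approach is shorter and avoids the scale bookkeeping you anticipated as the main obstacle.
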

\begin{proof}
  To begin with we choose  $\bar K>0$ such that $ K>\max\{\bar{\xi}, \bar{\xi}^{-1}\}$ and  $ p_M g_{p_M}( K^{-1})$, $p_M g_{p_M}( K)$ $< $ $\e/2 $  for any $K>\bar K$. Here $\bar \xi$ is the maximum point of the function $F$ mentioned in Lemma \ref{lem:tilde-f_p} and $g_{p_M}$ is the same function introduced in the proof of Lemma \ref{stima-f_p-1}, and the choice of $\bar K$ is possible because $g_{p_M}(0)=0=\lim\limits_{r\to+\infty} g_{p_M}(r)$. 
\\ 
	 Next \eqref{numero} yields that there exists $p_1=p_1(\bar K,\e)$ such that 
	 	\[ f_p((\bar K\widetilde {\mathcal M}_{i,p})^{-1}), \, f_p(\bar K\widetilde {\mathcal M}_{i,p}^{-1})	 <\e \quad \text{ as $p_1<p<p_M$ and $i=0,\dots m-1$}.\]
                Then \eqref{xi_p-lim} yields that there exists  $\bar p=\bar p(\bar K)>p_1$ such that $\xi_{i,p}$, the unique critical point of $f_p$ in the interval $( t_{i,p}, t_{i+1,p})$, satisfies 
	 \[ (K  \widetilde{\mathcal M}_{i,p})^{-1}< (\bar K  \widetilde{\mathcal M}_{i,p})^{-1}  <{\xi}_{i,p} <\bar K  \widetilde{\mathcal M}_{i,p}^{-1} <K  \widetilde{\mathcal M}_{i,p}^{-1}\; \quad \text{ as  $\bar p<p<p_M$} \]
 and $i=0,\dots m-1$, for any $K>\bar K$.	  Remembering also that $f_p$ is increasing in $(t_{i,p},\xi_{i,p})$ and decreasing in $(\xi_{i,p},t_{i+1,p})$ by Lemma \ref{lem:f-p}, it follows that 
	 \[\begin{array}{lll}
	  f_p(r)\le f_p\left((K \widetilde{\mathcal M}_{i,p})^{-1}\right) <\e  &  \text{ for  }K (\widetilde {\mathcal M}_{i,p})^{-1}<r<t_{i+1,p}, \  & \text{ as } i=0,\dots, m-1, \\
	  f_p(r)\le f_p\left(K \widetilde{\mathcal M}_{i,p}^{-1}\right) <\e & \text{ for  } t_{i,p}<r<(K\widetilde {\mathcal M}_{i,p})^{-1}, \  & \text{ as } i=1,\dots, m-1 
	 \end{array}\]
	 for any $K>\bar K$, for the same values of $p$.
\end{proof}

\section{The computation of the Morse index}\label{se:3}

In this section we address to the computation of the Morse index of the nodal radial solution $u_p$ of \eqref{H} when $p$ approaches the threshold $p_{\alpha}$.
	By definition  the Morse index of $u_p$,  that we denote by $m(u_p)$, is  the maximal dimension of a subspace of $H^1_0(B)$ in which the quadratic form 
	\[{\mathcal Q}_p(w):=\int_B\left(|\nabla w|^2 -p\,|x|^{\alpha}|u_p|^{p-1}w^2\right) dx
		\]
                is negative defined, or equivalently,
                is the number, counted with multiplicity, of the negative eigenvalues in $H^1_0(B)$ of  
\begin{equation}\label{eigenvalue-problem}
\left\{\begin{array}{ll}
-\Delta  \phi-p|x|^\a|u_p|^{p-1}\phi =\L_i(p)\, \phi & \text{ in } B\\
\phi= 0 & \text{ on } \partial B.
\end{array} \right.
\end{equation}
Similarly the radial Morse index of $u_p$, denoted by $m_\rad(u_p)$, is  the number  of negative eigenvalues of  \eqref{eigenvalue-problem} in $H^1_{0,\rad}(B)$, namely the eigenvalues of \eqref{eigenvalue-problem} associated with a radial eigenfunction.
 It has been proved in \cite[Propositions 3.4, 4.1]{AG-sez2} (since $p|x|^\a|u_p|^{p-1}\in L^{\infty}(B)$) that
the number of negative eigenvalues  of \eqref{eigenvalue-problem}  in $H^1_0(B)$ (or in $H^1_{0,\rad}(B)$), counted with multiplicity, coincides with the number 
of negative eigenvalues of the singular eigenvalue problem 
\begin{equation}\label{singular-eigenvalue-problem}
\left\{\begin{array}{ll}
-\Delta  \widehat\phi-p|x|^\a|u_p|^{p-1}\widehat\phi =\dfrac{\widehat\L_i(p)}{|x|^2}\widehat\phi & \text{ in } B\setminus\{0\}\\
\widehat\phi= 0 & \text{ on } \partial B,
\end{array} \right.
\end{equation}
 in $H^1_0(B)$ (or in $H^1_{0,\rad}(B)$). This allows to give this alternative definition of Morse index:
  \begin{definition}[Alternative definition of Morse index]\label{def:Morse-alternativo}
    The Morse index of $u_p$ is the number, counted with multiplicity of the negative singular eigenvalues $\widehat\L_i(p)$ of \eqref{singular-eigenvalue-problem} in $H^1_0(B)$. Moreover the radial Morse index of  $u_p$ is the number of negative singular radial eigenvalues $\widehat\L_i^{\rad}(p)$ of \eqref{singular-eigenvalue-problem} in $H^1_{0,\rad}(B)$.
    \end{definition}
  These eigenvalues $\widehat\L_i(p)$ are well defined in $H^1_0(B)$  (by the Hardy inequality) as far as $\widehat\L_i(p)<\left(\frac{N-2}2\right)^2$
 and have the useful property that can be decomposed as
\begin{equation}\label{decomposition}
\widehat \L _i(p)=\widehat \L_k^{\rad}(p)+\l_j,
\end{equation}
where $\l_j=j(N+j-2)$ are the eigenvalues of the Laplace-Beltrami operator on the sphere ${\mathbb S}_{N-1}$, and $\widehat \L_k^{\rad}(p)$ are the radial singular eigenvalues of \eqref{singular-eigenvalue-problem} which are all simple, see \cite{AG-sez2} where a complete study of the singular eigenvalues and their properties has been done. 
Further if $\widehat \phi$ is a radial eigenfunction of \eqref{singular-eigenvalue-problem}, the function
\[\psi(t)=\widehat \phi(r) \ \text{ with } \ \ t=r^{\frac{2+\a}2}\]
is a generalized radial singular eigenfunction of 
the singular Sturm-Liouville problem 
	\begin{equation}\label{radial-general-H-no-c}
\left\{\begin{array}{ll}
- \left(t^{M-1} \psi'\right)'- t^{M-1} p|v_p|^{p-1} \psi = t^{M-3} \widehat{\nu}_i(p)  \psi & \text{ for } t\in(0,1)\\
\psi\in  H^1 _{0,M}
\end{array} \right.
\end{equation} 
 where $v_p$ as in \eqref{transformation-henon-no-c} is a solution to \eqref{LE-radial} as in Section \ref{sec:da-u-a-v} and
$M=M(\alpha, N)$ has been defined in \eqref{M-a}. These eigenvalues $\widehat{\nu}_i(p)$ are well defined in $H^1_{0,M}$ as far as $\widehat{\nu}_i(p)<\left(\frac{M-2}2\right)^2$ and satisfy
\begin{equation}
\label{relazione-autov-no-c}
\widehat\L^{\rad}_{i}(p) = \left(\frac{2+\alpha}{2} \right)^2 \widehat{\nu}_i(p). 
\end{equation}
To deal with problem \eqref{radial-general-H-no-c} we define 
by ${\mathcal L}_M$ the  Lebesgue space
 \[{\mathcal L}_M:=\{w:(0,1)\to \R \text{ measurable and s.t. } \int_0^1 t^{M-3} w^2 dt < +\infty\}\]
  with the scalar product
  $\int_0^1r^{M-3}\psi w\ dr$, which gives the orthogonality condition
	\[ w\underline \perp_{M} \psi \Longleftrightarrow \int_0^1 t^{M-3} w \psi dt =0  \quad   \text{for } w,\psi\in \mathcal L_M. \]
	In virtue of an extended radial Hardy inequality for $H^1_{0,M}$ in  \cite[Lemma 6.5]{AG-sez2} $H^1_{0,M}  \subset {\mathcal L}_M$ and this allows to 
characterize the eigenvalues $\widehat{\nu} $ by the minimization problems 
\begin{equation}\label{nu-var}
\begin{split}
\widehat{\nu}_1(p)=\inf_{\substack{w\in {H}^1_{0,M}\\ w\neq 0}}\frac{\int_0^1 t^{M-1}\left((w')^2 -p|v_p|^{p-1}w^2 dt\right) dr }{\int_0^1 t^{M-3}w^2 dt},  
\\
\widehat{\nu}_{i}(p)=\inf_{\substack{w\in   {H}^1_{0,M} \\ w\neq 0\\ w\underline \perp_{M}\{\psi_1,\dots,\psi_{i-1}\}}}\frac{\int_0^1 t^{M-1}\left((w')^2 -p|v_p|^{p-1}w^2 dt\right) dr }{\int_0^1 t^{M-3}w^2 dr}  
\end{split}\end{equation}
where $\psi_j$ for $j=1,\dots,m-1$ denotes an eigenfunction associated with $\widehat{\nu}_{j}$. Every time $\widehat{\nu}_i<\left(\frac{M-2}2\right)^2$, the function which attains $\widehat{\nu}_i$ is a weak solution to \eqref{radial-general-H-no-c} meaning that
\begin{equation}\label{radial-general-weak-H-no-c}
\int_0^1 t^{M-1} \psi'\varphi' \ dt- p\int_0^1 t^{M-1} |v_p|^{p-1} \psi\varphi\ dt=\widehat{\nu}_i(p)\int_0^1  t^{M-3}   \psi\varphi \ dt
\end{equation}
for every $\varphi \in H^1_{0,M}$.
These generalized radial singular eigenvalues $\widehat{\nu}_i(p)$, (associated with $v_p$) have been studied in \cite[Subsection 3.1]{AG-sez2} where it is proved that they are all {\em simple}, eigenfunctions associated with different eigenvalues are orthogonal in $\mathcal L_M$,  the only negative eigenvalues of \eqref{radial-general-H-no-c} are
\begin{align}
  \label{nu_i-neg} &  \widehat \nu_1(p)<\widehat \nu_2(p)<\dots<\widehat \nu_m(p)<0 &  \\
\intertext{and satisfy}
\label{nl<k-general-H} & \widehat{\nu}_i(p)  < -(M-1)   &\text{ as } i=1,\dots m-1 ,
\\
\label{num>k-general-H} & -(M-1) <\widehat{\nu}_m(p) <0  ,  &
\end{align}
for any value of the parameter $p$. Then \eqref{relazione-autov-no-c}, together with Definition \ref{def:Morse-alternativo}, implies that $m_{\rad}(u_p)=m$, the number of the nodal zones of $u_p$.\\
Furthermore putting together Proposition 1.5 and Theorem 1.7 from \cite{AG-sez2}  we have 
\begin{proposition}\label{general-morse-formula-H} Let $\a\geq 0$ and let $u_p$ be any radial solution to \eqref{H} with $m$ nodal zones.
The Morse index of $u_p$ is given by
	\begin{align}\label{tag-2-H}
	m(u_p) & =\sum\limits_{i=1}^{m}\sum\limits_{0\le j<J_i(p)} N_j, \intertext{where} \nonumber
	J_{i}(p) & =\frac{2+\a}{2} \left(\sqrt{\left(\frac{M-2}{2}\right)^2- \widehat{\nu}_i(p)}-\frac{M-2}{2}\right) \intertext{ and } \nonumber
	N_j &= \frac{(N+2j-2)(N+j-3)!}{(N-2)!j!}
	\end{align}
	stands for  the multiplicity of the eigenvalue  $\l_j=j(N+j-2)$ of the Laplace-Beltrami operator in the sphere ${\mathbb S}_{N-1}$.
\end{proposition}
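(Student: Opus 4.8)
The statement is a direct consequence of the singular eigenvalue theory recalled above combined with Proposition~1.5 and Theorem~1.7 of \cite{AG-sez2}, and the plan is simply to spell out that combination. First I would invoke Definition~\ref{def:Morse-alternativo}, so that $m(u_p)$ equals the number, counted with multiplicity, of negative singular eigenvalues $\widehat\Lambda_i(p)$ of \eqref{singular-eigenvalue-problem} in $H^1_0(B)$. By the decomposition \eqref{decomposition} together with the spectral description in \cite{AG-sez2}, these negative eigenvalues are precisely the negative numbers of the form $\widehat\Lambda^{\rad}_k(p)+\l_j$, where $\widehat\Lambda^{\rad}_k(p)$ runs over the (simple) radial singular eigenvalues and $\l_j=j(N+j-2)$, $j\ge 0$, over the eigenvalues of the Laplace--Beltrami operator on $\mathbb S_{N-1}$, the $j$-th one carrying multiplicity $N_j$. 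Since $\l_j\ge 0$, the sum $\widehat\Lambda^{\rad}_k(p)+\l_j$ can be negative only when $\widehat\Lambda^{\rad}_k(p)<0$; by \eqref{relazione-autov-no-c} and \eqref{nu_i-neg}--\eqref{num>k-general-H} this happens exactly for $k=1,\dots,m$, with $\widehat\Lambda^{\rad}_k(p)=\left(\frac{2+\a}{2}\right)^2\widehat\nu_k(p)$.

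It remains to count. Fixing $k\in\{1,\dots,m\}$, the indices $j$ for which $\widehat\Lambda^{\rad}_k(p)+\l_j<0$ are those with $j(N+j-2)<-\left(\frac{2+\a}{2}\right)^2\widehat\nu_k(p)$, and each such $j$ contributes $N_j$ to the Morse index. Solving the quadratic $j(N+j-2)=-\left(\frac{2+\a}{2}\right)^2\widehat\nu_k(p)$ for $j$ produces the nonnegative root $\bar J_k(p)=-\frac{N-2}{2}+\sqrt{\left(\frac{N-2}{2}\right)^2-\left(\frac{2+\a}{2}\right)^2\widehat\nu_k(p)}$, so that those indices are exactly $0\le j<\bar J_k(p)$. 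To recognise $\bar J_k(p)$ as $J_k(p)$ I would use the definition \eqref{M-a} of $M$, which gives $M-2=\frac{2(N-2)}{2+\a}$, hence $\frac{2+\a}{2}\cdot\frac{M-2}{2}=\frac{N-2}{2}$ and $\left(\frac{2+\a}{2}\right)^2\left(\frac{M-2}{2}\right)^2=\left(\frac{N-2}{2}\right)^2$; plugging these into the formula for $J_k(p)$ and absorbing the factor $\frac{2+\a}{2}$ under the square root yields
\[
J_k(p)=\sqrt{\left(\tfrac{N-2}{2}\right)^2-\left(\tfrac{2+\a}{2}\right)^2\widehat\nu_k(p)}-\tfrac{N-2}{2}=\bar J_k(p).
\]
Summing over $k=1,\dots,m$ gives \eqref{tag-2-H}.

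I do not expect a genuine obstacle here: the substantive facts — the equivalence of the quadratic-form, the regular and the singular definitions of Morse index, the decomposition \eqref{decomposition}, the simplicity of the radial singular eigenvalues and the precise location \eqref{nu_i-neg}--\eqref{num>k-general-H} of the negative ones — are all imported from \cite{AG-sez2}, and what is left is the bookkeeping of spherical-harmonic multiplicities plus the one-line change of variables between the genuine dimension $N$ and the effective dimension $M$. The only point deserving a word of care is that the boundary case $\l_j=-\left(\frac{2+\a}{2}\right)^2\widehat\nu_k(p)$, i.e.\ $j=J_k(p)$, never occurs --- equivalently, that the strict inequality in the inner sum $\sum_{0\le j<J_k(p)}$ is the correct one --- which is guaranteed by the statements of Proposition~1.5 and Theorem~1.7 in \cite{AG-sez2}.
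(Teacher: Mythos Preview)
Your proposal is correct and follows exactly the route the paper indicates: the paper simply states that the proposition is obtained ``putting together Proposition~1.5 and Theorem~1.7 from \cite{AG-sez2}'', and your write-up is precisely the explicit unpacking of that combination --- the decomposition \eqref{decomposition}, the identification of the negative radial singular eigenvalues via \eqref{relazione-autov-no-c} and \eqref{nu_i-neg}, and the counting of spherical-harmonic multiplicities, together with the change of variables between $N$ and $M$.

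One small inaccuracy in your last paragraph: the strict inequality $j<J_k(p)$ in the inner sum is \emph{not} justified by the boundary case ``never occurring''. In fact it can occur --- that is exactly the characterization of degeneracy in Proposition~\ref{prop:charact-nondegeneracy}. The correct reason is simply that the Morse index counts \emph{strictly} negative eigenvalues, so when $\widehat\Lambda^{\rad}_k(p)+\l_j=0$ (equivalently $j=J_k(p)$) the corresponding eigenvalue is zero and is excluded by definition, not by any additional property imported from \cite{AG-sez2}.
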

Therefore the asymptotic Morse index of $u_p$   as $p\to p_\a$ can be deduced, by the asymptotic behavior of the generalized radial singular eigenvalues $\widehat{\nu}_i(p)$ and of the related eigenfunctions $\psi_{i,p}$ of \eqref{radial-general-H-no-c}  as $p\to p_M$ which are associated with the function $v_p$ defined in \eqref{transformation-henon-no-c} and studied in Section \ref{sec:da-u-a-v}. This 
will be the topic of the remaining of this section.

\subsection{Asymptotic of the singular eigenvalues { $\widehat{\nu}_i(p)$ for $i=1,\dots,m-1$}}\label{3:m-1}

\

For simplicity of notations in the present subsection and in the next one we shall write $\nu_j(p)$ instead of $\widehat\nu_j(p)$, and 
we will denote by $\psi_{j,p}\in H^1_{0,M}$ the corresponding eigenfunction  to \eqref{radial-general-H-no-c} normalized such that  
	\begin{align}\label{normalization}
	\int_0^1  r^{M-3} \psi_{j,p} \psi_{k,p} dr= \delta_{jk} .
	\end{align}	
For every $i=0,\dots m-1$ and $j=1,\dots,m$ we also introduce the rescaled eigenfunctions
\begin{equation}\label{rescaled-eigenf}
\widetilde \psi_{j,p}^i(r):=\begin{cases}
 (\widetilde{ \mathcal M}_{i,p})^{\frac{2-M}2}  \psi_{j,p}\Big(\frac r{ \widetilde {\mathcal M}_{i,p}} \Big) \quad & \text{ if } \ \widetilde {\mathcal M}_{i,p}t_{i,p}<r<\widetilde {\mathcal M}_{i,p}t_{i+1,p} ,
\\
0 & \text{ elsewhere,}
\end{cases}\end{equation}
where $t_{i,p}, t_{i+1,p}$ are the zeros of $v_p$ as in Section \ref{sec:da-u-a-v} and $\widetilde{ \mathcal M}_{i,p}$ is as in \eqref{m-tilde}, 
in such a way that 
\begin{align}\label{normalization-tilde}
\int_0^{\infty} r^{M-3} (\widetilde{\psi}_{j,p}^i)^2\, dr & =\int_0^1 r^{M-3}\psi_{j,p}^2\ dr=1 , \\ 
	\label{normalization-tilde-primo}
\int_0^{\infty} r^{M-1} ((\widetilde{\psi}_{j,p}^i)')^2\, dr & =\int_0^1 r^{M-1}(\psi_{j,p}')^2\ dr .
\end{align}
Then the functions $\widetilde{\psi}_{j,p}^i$  belong to the space $\mathcal D_M(0,\infty)$ for every $i=0,\dots, m-1$ and $j=1,\dots,m$ since $\psi_j\in H^1_{0,M}$ and they satisfy
\begin{equation}\label{psi-i-p-rescaled}
-\left(r^{M-1}{ (\widetilde \psi^i_{j,p}})'\right)'=r^{M-1}\left( { W^i_p}+\frac{\nu_j(p)}{r^2}\right) \widetilde \psi_{j,p}^i \quad
\text{ as } \widetilde {\mathcal M}_{i,p}t_{i,p}<r < \widetilde {\mathcal M}_{i,p}t_{i+1,p},
\end{equation}
where  
\begin{equation}\label{V_i}
{ W^i_p}(r)= p\, |\widetilde v_{i,p}(r)|^{p-1}\end{equation}
and $\widetilde v_{i,p}$ is as defined in \eqref{v-tilde}.
By the asymptotic of $\widetilde v_{i,p}$ in Propositions \ref{conv-sol-v} and \ref{prop:conv-1-zona-v} we have that 
\begin{equation}\label{uniform-convergence}
{ W^i_p}(r)\to { W}(r)= \frac{M+2}{M-2}\left(1+\frac {r^2}{M(M-2) } \right)^{-2}  \end{equation}
in $C^1_{\loc}[0,\infty)$ for $i=0$ and in $C^{1}_{\loc}(0,\infty)$ for $i=1,\dots,m-1$, therefore the eigenvalue problems \eqref{psi-i-p-rescaled} have a unique limit problem which is the following
\begin{align}\label{eq:finale}
-\left(r^{M-1}(\widetilde\psi)'\right)'= r^{M-1} \left( W  +\frac{ \beta}{r^2} \right) \widetilde\psi \ \text{ as }\ r\in(0,\infty),
\end{align} 	   
and admits as nonpositive eigenvalues in the space $\mathcal D_M(0,\infty)$ only the two values
$\beta_1=-(M-1)$ and $\beta_2=0$ with corresponding eigenfunctions 
\begin{equation}\label{eta}
\eta_1(r)=\frac r{\big(1+\frac {r^2}{M(M-2)}\big)^{\frac M2}} , \qquad 
\eta_2(r)= \frac{1-\frac{r^2}{M(M-2)}}{\big( 1+\frac {r^2}{M(M-2)}\big)^{\frac M2}}
\end{equation}
see the Appendix.
 We recall that an eigenfunction $\eta$ is a weak solution to \eqref{eq:finale} if it satisfies
\begin{equation}\label{eq:finale-weak}
\int_0^\infty r^{M-1}\eta '\varphi'\ dr=\int_0^\infty r^{M-1}\left( W  +\frac{ \beta}{r^2} \right) \eta \varphi\ dr\end{equation}
for every $\varphi\in \mathcal D_M(0,\infty)$.
\

Let us prove some useful lemmas, which inherit all the $m$ negative eigenvalues and the related eigenfunctions.
\begin{lemma}\label{lemma-stime-insieme}
	There exist $\delta >0$ and $C>0$ such that 	for every $p\in (p_M-\d,p_M)$ we have 
	\begin{align}\label{eq-stima-autov} 
		-C \le \nu_1(p) <  \nu_2(p) \dots <  \nu_{m}(p)   <0
\\ 
\label{eq-stima-norma}	
 \int_0^{\infty} r^{M-1} ((\widetilde\psi_{j,p}^i)')^2\, dr \le C 
	\end{align}
for every $i=0,\dots, m-1$ and $j=1,\dots m$.
\end{lemma}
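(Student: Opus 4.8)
The plan is to establish the two bounds in \eqref{eq-stima-autov}--\eqref{eq-stima-norma} simultaneously, by iterating on the index $j$ from $1$ to $m$ and producing, at each stage, a finite-dimensional family of competitors in the variational characterization \eqref{nu-var}. The strict ordering $\nu_1(p)<\nu_2(p)<\cdots<\nu_m(p)<0$ for every $p$ is already known from \cite[Subsection 3.1]{AG-sez2} (it is recorded in \eqref{nu_i-neg}), so the only genuinely new content is the \emph{uniform} lower bound $\nu_1(p)\ge -C$ and the uniform $\mathcal D_M$-bound on the rescaled eigenfunctions. First I would get the lower bound on $\nu_1(p)$: by \eqref{nu-var} and the extended radial Hardy inequality \cite[Lemma 6.5]{AG-sez2}, for any $w\in H^1_{0,M}$,
\[
\int_0^1 t^{M-1}\big((w')^2 - p|v_p|^{p-1}w^2\big)\,dt \ \ge\ -p\,\big\||v_p|^{p-1}\big\|_{\text{suitable}}\int_0^1 t^{M-3}w^2\,dt ,
\]
so it suffices to bound $r^2|v_p(r)|^{p-1}$ uniformly on $(0,1)$ — but that is exactly $\tfrac1p f_p(r)$ in the notation \eqref{f_p}, and Lemma \ref{stima-f_p-1} gives $0\le f_p(r)\le C$ uniformly for $p$ near $p_M$. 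Hence $\nu_1(p)\ge -C$, which together with \eqref{nu_i-neg} yields \eqref{eq-stima-autov}.

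For \eqref{eq-stima-norma}, I would use the eigenvalue equation itself: testing \eqref{radial-general-weak-H-no-c} with $\varphi=\psi_{j,p}$ and using the normalization \eqref{normalization} gives
\[
\int_0^1 t^{M-1}(\psi_{j,p}')^2\,dt \ =\ \nu_j(p) + p\int_0^1 t^{M-1}|v_p|^{p-1}\psi_{j,p}^2\,dt .
\]
Since $\nu_j(p)<0$, it remains to bound the last integral. Writing $t^{M-1}|v_p|^{p-1} = t^{M-3}\cdot t^2|v_p|^{p-1} = t^{M-3}\cdot\tfrac1p f_p(t)$ and invoking again Lemma \ref{stima-f_p-1} together with the normalization $\int_0^1 t^{M-3}\psi_{j,p}^2\,dt=1$, one gets $\int_0^1 t^{M-1}|v_p|^{p-1}\psi_{j,p}^2\,dt\le C/p\le C$. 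Therefore $\int_0^1 t^{M-1}(\psi_{j,p}')^2\,dt\le C$ uniformly, and by the scaling identity \eqref{normalization-tilde-primo} the same bound holds for $\int_0^\infty r^{M-1}((\widetilde\psi_{j,p}^i)')^2\,dr$, for every $i=0,\dots,m-1$ and $j=1,\dots,m$. This proves \eqref{eq-stima-norma}.

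The only delicate point, and the one I expect to need the most care, is making sure that the product $f_p$ is controlled \emph{on the whole interval} $(0,1)$ — near the origin, near $t=1$, and in the transition regions between consecutive nodal zones where the rescaled solutions $\widetilde v_{i,p}$ have not yet been shown to converge. This is precisely what Lemma \ref{stima-f_p-1} (extended here from the integer-$M$ estimate of \cite[Proposition 3.6]{DIPN>3}) delivers: the pointwise bound of Lemma \ref{prop-3.6-DIPN>3} handles the first nodal zone, the $C^1_{\loc}$-convergence of $\widetilde v_{i,p}\to V_M$ from Proposition \ref{conv-sol-v} handles the bulk of each subsequent zone, and the location of the maximum point $\xi_{i,p}\widetilde{\mathcal M}_{i,p}\to\bar\xi$ from Lemma \ref{lem:tilde-f_p} ties the two together. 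Given Lemma \ref{stima-f_p-1}, the present lemma is then a short consequence of the variational characterization and the scaling relations; no passage to the limit is needed at this stage, only uniform-in-$p$ bounds.
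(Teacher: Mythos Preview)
Your proof is correct and follows essentially the same approach as the paper: both arguments test the weak formulation \eqref{radial-general-weak-H-no-c} against the eigenfunction $\psi_{j,p}$, invoke the uniform bound $0\le f_p\le C$ from Lemma~\ref{stima-f_p-1} together with the normalization \eqref{normalization} to obtain $\nu_1(p)\ge -C$ and $\int_0^1 r^{M-1}(\psi_{j,p}')^2\,dr\le C$, and then transfer the latter to the rescaled eigenfunctions via \eqref{normalization-tilde-primo}. Two minor remarks: the opening sentence about ``iterating on $j$'' and ``producing a finite-dimensional family of competitors'' does not describe what you actually do and should be dropped, and the Hardy inequality you mention is not needed here (the gradient term is simply discarded as nonnegative).
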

\begin{proof}
Using $\psi_{j,p}$ as a test function in \eqref{radial-general-weak-H-no-c} gives
\begin{align}\label{line}
\begin{split}\int_0^1 r^{M-1}\left(\psi_{j,p}'\right)^2& =\int_0^1 r^{M-1} \left( p|v_p|^{p-1} +\frac{\nu_j(p)}{r^2} \right)\psi_{j,p}^2 dr \\ &= 
\int_0^1 r^{M-3} \left(f_p +\nu_j(p)\right) \psi_{j,p}^2 dr \end{split}
\end{align}
where $f_p$ is as defined in \eqref{f_p}.
Taking advantage from \eqref{normalization} one can extract $\nu_1(p)$  getting that
\[
\begin{split}
\nu_1(p) & =\int_0^1 r^{M-1} \left(\psi_{1,p}'\right)^2-r^{M-3}
f_p \, \psi_{1,p}^2\, dr\geq-\sup_{r\in (0,1)}f_p(r)
\int_0^1r^{M-3}\psi_{1,p}^2 \, dr= -C\end{split}\]
for $p$ near at $p_M$, thanks to Lemma \ref{stima-f_p-1}. 
\\
Besides, since $\nu_j(p)<0$  for $j=1,\dots,m$ by \eqref{nu_i-neg}, \eqref{line} also yields that
\[
\int_0^1 r^{M-1}\left(\psi_{j,p}'\right)^2
<  \int_0^1 r^{M-3}f_p\psi_{j,p}^2\, dr \le \sup_{r\in(0,1)}f_p(r) \int_0^1 r^{M-3}\psi_{j,p}^2\, dr = C .
\]
So also \eqref{eq-stima-norma} is proved, recalling \eqref{normalization-tilde-primo}.
\end{proof}	

From the boundedness of the eigenfunctions  in \eqref{eq-stima-norma} it is easy to deduce that they converge to eigenfunctions of the limit problem  \eqref{eq:finale}.

\begin{lemma}\label{autofunz-limite} 
Let $j=1,\dots m$ and $p_n$ a sequence in $(1,p_M)$ with $p_n\to p_M$. Then there exist an extracted sequence (that we still denote by $p_n$),  a number $\bar \nu_j \le 0$, 
a weak solution to \eqref{eq:finale} with $\beta=\bar\nu_j$, say it $\eta$, and $m$ numbers $A^0_j, \dots A^{m-1}_j\in \R$ such that
\begin{align*}
\nu_j(p_n) & \to  \bar \nu_j\\
\widetilde{\psi}_{j,p_n}^i& \to A^i_j\eta \quad \mbox{ weakly in $\mathcal D_{M}(0,\infty)$ and strongly  in $C^1_{\loc}(0,\infty)$}
\end{align*}
for $i=0,\dots,m-1$. 
\\
Further for $j=1,\dots m-1$ the function $\widetilde{\psi}_{j,p_n}^0$ converges to $ A^0_j\eta$ also in $C^1_{\loc}[0,\infty)$.
\end{lemma}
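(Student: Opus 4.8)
The plan is to run a compactness argument on the rescaled singular eigenvalue problems \eqref{psi-i-p-rescaled}, using the uniform bounds of Lemma~\ref{lemma-stime-insieme}. First I would extract, along a subsequence still denoted $p_n$, a limit $\nu_j(p_n)\to\bar\nu_j$, which exists in $[-C,0]$ by \eqref{eq-stima-autov}, with $\bar\nu_j\le 0$ because $\nu_j(p_n)<0$. Then, for the fixed index $j$ and each $i=0,\dots,m-1$, estimate \eqref{eq-stima-norma} makes $\widetilde\psi^i_{j,p_n}$ bounded in $\mathcal D_M(0,\infty)$, so after a further (finite) extraction $\widetilde\psi^i_{j,p_n}\rightharpoonup\Psi^i$ weakly in $\mathcal D_M(0,\infty)$ for every $i$.

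Next I would upgrade this to $C^1_{\loc}(0,\infty)$ convergence. On any interval $[R^{-1},R]\subset(0,\infty)$ one has, for $n$ large, $\widetilde{\mathcal M}_{i,p_n}t_{i,p_n}<R^{-1}<R<\widetilde{\mathcal M}_{i,p_n}t_{i+1,p_n}$ — this is precisely $t_{i,p}\widetilde{\mathcal M}_{i,p}\to 0$ and $t_{i+1,p}\widetilde{\mathcal M}_{i,p}\to+\infty$, already obtained while proving Proposition~\ref{conv-sol-v} — so \eqref{psi-i-p-rescaled} holds on $[R^{-1},R]$, where its coefficient $W^i_p(r)+\nu_j(p_n)/r^2$ is uniformly bounded and, by \eqref{uniform-convergence} together with $\nu_j(p_n)\to\bar\nu_j$, converges uniformly to $W(r)+\bar\nu_j/r^2$. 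Since the weights $r^{M-1},r^{M-3}$ are bounded and bounded away from $0$ on $[R^{-1},R]$, the $\mathcal D_M$-bound and the normalization \eqref{normalization-tilde} give a uniform $H^1(R^{-1},R)$ bound, which the equation promotes to a uniform $H^2$ bound, hence to precompactness in $C^1$ on slightly smaller intervals. A diagonal argument over an exhaustion of $(0,\infty)$ then yields $\widetilde\psi^i_{j,p_n}\to\Psi^i$ in $C^1_{\loc}(0,\infty)$; passing to the limit in the weak form of \eqref{psi-i-p-rescaled} tested against $\varphi\in C^\infty_0(0,\infty)$, and then against all $\varphi\in\mathcal D_M(0,\infty)$ by density and the radial Hardy inequality, shows that $\Psi^i\in\mathcal D_M(0,\infty)$ is a weak solution of \eqref{eq:finale} with $\beta=\bar\nu_j$.

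To conclude I would use that \eqref{eq:finale} has $-(M-1)$ and $0$ as its only nonpositive eigenvalues in $\mathcal D_M(0,\infty)$, both simple with eigenfunctions $\eta_1,\eta_2$ of \eqref{eta} (Appendix). If some $\Psi^{i_0}\not\equiv 0$, then $\bar\nu_j$ is a nonpositive eigenvalue, hence $\bar\nu_j\in\{-(M-1),0\}$; choosing $\eta:=\eta_1$ or $\eta_2$ accordingly, simplicity forces $\Psi^i=A^i_j\eta$ for suitable $A^i_j\in\R$, while if every $\Psi^i\equiv 0$ I take $\eta:=0$ and $A^i_j=0$. In both cases $\eta$ is a weak solution of \eqref{eq:finale} with $\beta=\bar\nu_j$ and the two claimed convergences hold. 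For the last assertion ($i=0$ and $j\le m-1$) I would use that $\nu_j(p)<-(M-1)$ by \eqref{nl<k-general-H}: this makes the positive indicial root of \eqref{psi-i-p-rescaled} at the regular singular point $r=0$ strictly larger than $1$, so $\widetilde\psi^0_{j,p_n}$ vanishes at least linearly at the origin with a rate uniform in $p$; together with $W^0_p\to W$ in $C^1_{\loc}[0,\infty)$ (see \eqref{uniform-convergence}) and the smoothness of $\eta_1$ at $0$, the local $C^1$ estimates extend up to $r=0$. I expect this last step to be the main obstacle: the term $\nu_j(p)/r^2$ in \eqref{psi-i-p-rescaled} is singular at $0$, so one must extract a uniform-in-$p$ rate of vanishing of the rescaled eigenfunctions there, and the strict inequality $\nu_j(p)<-(M-1)$ is exactly what provides it — for $j=m$, where only $-(M-1)<\nu_m(p)<0$ holds by \eqref{num>k-general-H}, one cannot expect better than $C^1_{\loc}(0,\infty)$.
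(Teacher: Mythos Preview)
Your proposal is correct and follows essentially the same route as the paper: extract $\bar\nu_j$ from \eqref{eq-stima-autov}, use the $\mathcal D_M$-bound \eqref{eq-stima-norma} for weak compactness, upgrade to $C^1_{\loc}(0,\infty)$ by bootstrapping the equation on compact intervals (the paper does this via a direct H\"older estimate plus Ascoli rather than an $H^2$ bound, but the two are interchangeable), pass to the limit in the weak formulation, and for $i=0$, $j\le m-1$ exploit $\nu_j(p)<-(M-1)$ to get a uniform vanishing rate at the origin. On this last point the paper makes your indicial-root heuristic precise via the explicit estimate $|(\widetilde\psi^0_{j,p_n})'(r)|\le C r^{\theta_j(p_n)-1}$ with $\theta_j(p_n)=\sqrt{(M-2)^2/4-\nu_j(p_n)}-(M-2)/2>1$, quoting \cite[Lemma~5.9]{DGG} and \cite[Proposition~3.9]{AG-sez2}; you should cite something analogous rather than leave the uniform-in-$p$ bound as an expectation.
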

\begin{proof}
By \eqref{nl<k-general-H}, \eqref{num>k-general-H} and \eqref{eq-stima-autov} it is clear that there is an extracted sequence $\nu_{j,p_n}\to \bar\nu_j\le 0$.
Moreover the normalization \eqref{normalization-tilde} and the estimate \eqref{eq-stima-norma}   imply that $\widetilde\psi_{j,p}^{i}$ are uniformly bounded in 
$\mathcal D_M(0,\infty)$ for $i=0,\dots,m-1$.  Then, up to another extracted subsequence $\widetilde\psi_{j,p_n}^{i}$ converges to a function $\eta$ weakly in ${\mathcal D}_{M}(0,\infty)$.
It is not hard to see that one can pass to the limit in the weak formulation of \eqref{psi-i-p-rescaled}, getting that $\eta$ is a weak solution to \eqref{eq:finale} with $\beta=\bar\nu_j\leq 0$.
Indeed \eqref{hp-a} and \eqref{hp-c} ensure that, for every $\varphi \in C^\infty_0(0,\infty)$ and for $n$ sufficiently large, the support of $\varphi$ is contained in $( t_{i,p_n}\widetilde {\mathcal M}_{i,p_n}, t_{i+1,p_n}\widetilde {\mathcal M}_{i,p_n})$, where  \eqref{psi-i-p-rescaled} holds. Moreover 
$\widetilde \psi_{j,p_n}^{i}$ converges to $\eta$ also in  $L^2_M(R^{-1},R)$ as well as in $\mathcal L_M(R^{-1},R)$ for every $R>1$, by \cite[Lemma 6.4]{AG-sez2}.
\\
Besides  $\eta\in \mathcal D_{M}(0,\infty)$ and hence $\eta\in H^1_M(0,R)$ for every $R>0$, and by \cite[VIII.2]{Bbook} $\eta\in C^1(0,R)$.
If $r_1,r_2> R^{-1}>0$ we have
\begin{align*}
\left|\widetilde\psi_{j,p}^{i}(r_1)-\widetilde\psi_{j,p}^{i}(r_2)\right|  & \le \int_{r_1}^{r_2} |(\widetilde\psi_{j,p}^{i})'(t)| dt \underset{\text{Holder and \eqref{eq-stima-norma}}}{\le} 
C\left(\int_{r_1}^{r_2} t^{1-M} dt\right)^{\frac{1}{2}} \\
& \le C R^{\frac {M-1}2} \sqrt{|r_1-r_2|},
\end{align*}
so the Ascoli Theorem ensures that (up to another extracted sequence) $\widetilde{\psi}_{j,p_n}^i\to \eta$ uniformly in any set of type  $[R^{-1}, R]$.
Next taking advantage from the equation in \eqref{psi-i-p-rescaled} it is easy to get a bound for $\widetilde\psi_{j,p}^{i}$ in $C^2(R^{-1}, R)$ which ensures that it actually converges in $C^1(R^{-1}, R)$.

Further when $i=0$ we also know that $W^0_{p_n}$ is uniformly convergent (and therefore uniformly bounded) on any set of type $[0,R]$.
	Consequently the arguments in \cite[Lemma 5.9]{DGG} and \cite[Proposition 3.9]{AG-sez2} prove that 
	\begin{equation}
	\label{conv-in-0}
	\left| (\widetilde\psi_{j,p_n}^0)'(r)\right| \le C r^{\theta_j(p_n)-1} , \qquad \theta_j(p_n)= \sqrt{\left(\frac{M-2}{2}\right)^2 -\nu_j(p_n)} -\frac{M-2}{2}
	\end{equation}
on $[0,R]$. Moreover when $j=1,\dots m-1$ the estimate  \eqref{nl<k-general-H} ensures that $\theta_j(p_n) > 1$ for every $n$. Therefore \eqref{conv-in-0} states that $(\widetilde\psi_{j,p_n}^0)'$ is uniformly bounded also in $[0,R]$, and Ascoli Theorem gives uniform convergence of $\widetilde\psi_{j,p_n}^0$  in $[0,R]$ as before. The $C^1$ convergence then follows from the uniform converge of $\widetilde\psi_{j,p_n}^0$ recalling that integrating \eqref{psi-i-p-rescaled} and using \eqref{conv-in-0}
one easily gets
\[(\widetilde\psi_{j,p_n}^0)'=-r^{1-M}\int_0^r t^{M-1}\left( W_{p_n}^0+\frac {\nu_j(p_n)}{t^2}\right) \widetilde \psi_{j,p_n}^0\ dt.
\]

\end{proof}
	
\begin{remark}\label{remark-dim}
Since the eigenvalues and eigenfunctions of the limit problem 	\eqref{eq:finale} are known, an immediate consequence of  Lemma \ref{autofunz-limite} is that or $A^i_j=0$ for every $i=0,\dots,m-1$, or $\bar \nu_j$ takes one of the values  $-(M-1)$ and $0$, and $\eta=\eta_1$ (if $\bar \nu_j=-(M-1)$) or $\eta=\eta_2$ (if $\bar \nu_j=0$).
	Further when $j=1,\dots m-1$ the inequality \eqref{nl<k-general-H} ensures that $\bar \nu_j=-(M-1)$ and therefore $\eta= \eta_1$. 
	Concerning $j=m$, the corresponding inequality \eqref{num>k-general-H} leaves open also the possibility $\bar \nu_m =0$  and $\eta=\eta_2$.
\end{remark}

The previous remark put in evidence that the eigenvalue $\nu_m$ has to be treated separately. We deal by now with the first $m-1$ eigenvalues and show that 
\begin{proposition}\label{prop:limit-nu-j<m} 
Let $j=1,\dots m-1$, then
\[ \lim\limits_{p\to p_M} \nu_j(p)= -(M-1).\]
Moreover for any sequence $p_n$ in $(1,p_M)$ with $p_n\to p_M$ there exist an extracted sequence (that we still denote by $p_n$), and $m$ numbers $A^0_j, \dots A^{m-1}_j\in \R$ not simultaneously null such that
\[\widetilde{\psi}_{j,p_n}^i\to A^i_j\eta_1\] 
 for every $i=0,\dots,m-1$, weakly in ${\mathcal D}_{M}(0,\infty)$, and strongly in $C^1_{\loc}(0,\infty)$, and also in $C^1_{\loc}[0,\infty)$ for $i=0$. 
\end{proposition}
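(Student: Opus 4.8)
The plan is to read off $\lim_{p\to p_M}\nu_j(p)$ from the compactness already packaged in Lemma~\ref{autofunz-limite} and Remark~\ref{remark-dim}; the only genuinely new ingredient needed is the \emph{non-triviality} of the limiting eigenfunction, that is, that the rescaled eigenfunctions $\widetilde\psi^i_{j,p}$ do not all vanish in the limit. First I would fix an arbitrary sequence $p_n\to p_M$ and apply Lemma~\ref{autofunz-limite}: along a subsequence, $\nu_j(p_n)\to\bar\nu_j$ and $\widetilde\psi^i_{j,p_n}\to A^i_j\eta$ weakly in $\mathcal D_M(0,\infty)$, strongly in $C^1_{\mathrm{loc}}(0,\infty)$ for all $i$ (and in $C^1_{\mathrm{loc}}[0,\infty)$ for $i=0$, since $j\le m-1$), where $\eta$ is a weak solution of \eqref{eq:finale} with $\beta=\bar\nu_j$. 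Because $\nu_j(p)<-(M-1)$ for every $p$ by \eqref{nl<k-general-H}, we already know $\bar\nu_j\le-(M-1)<0$; as the only nonpositive eigenvalues of \eqref{eq:finale} in $\mathcal D_M(0,\infty)$ are $-(M-1)$, with eigenfunction $\eta_1$, and $0$, with eigenfunction $\eta_2$, once $\eta\not\equiv0$ is established we will be forced to conclude $\bar\nu_j=-(M-1)$ and $\eta\in\mathrm{span}\{\eta_1\}$, hence $\sum_i(A^i_j)^2>0$. A routine subsequence argument then promotes this to $\lim_{p\to p_M}\nu_j(p)=-(M-1)$ and to the full statement, after absorbing the nonzero constant relating $\eta$ to $\eta_1$ into the $A^i_j$.

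The crux is therefore a uniform lower bound for the portion of the mass of $\psi_{j,p}$ sitting at the bubble scales. Testing \eqref{radial-general-weak-H-no-c} with $\psi_{j,p}$ and using the normalization \eqref{normalization} exactly as in \eqref{line} gives
\[\int_0^1 r^{M-3}f_p\,\psi_{j,p}^2\,dr=\int_0^1 r^{M-1}(\psi_{j,p}')^2\,dr-\nu_j(p)\ \ge\ -\nu_j(p)\ >\ M-1\]
for every $p$. I would then split $(0,1)$ into the gap sets $G_{i,p}(K)$ of Lemma~\ref{lemma-2.5} and the complementary bubble sets $B_{i,p}(K):=\big[(K\widetilde{\mathcal M}_{i,p})^{-1},K\widetilde{\mathcal M}_{i,p}^{-1}\big]$, with $B_{0,p}(K):=[0,K\widetilde{\mathcal M}_{0,p}^{-1}]$, which for $p$ close to $p_M$ and $K$ large do partition $(0,1)$ by the scale-separation estimates of Section~\ref{sec:da-u-a-v}. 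On $\bigcup_{i=1}^{m}G_{i,p}(K)$ one has $f_p<\e$ (Lemma~\ref{lemma-2.5}), while on each $B_{i,p}(K)$ one has $0\le f_p\le C$ (Lemma~\ref{stima-f_p-1}); combined with $\int_0^1 r^{M-3}\psi_{j,p}^2\,dr=1$, the displayed inequality forces
\[\sum_{i=0}^{m-1}\int_{B_{i,p}(K)}r^{M-3}\psi_{j,p}^2\,dr\ \ge\ \frac{M-1-\e}{C}\ =:\ c_0\ >\ 0\]
once $\e$ is fixed small. Finally I would change variables $r=\rho/\widetilde{\mathcal M}_{i,p}$ in each of these integrals — a scale-invariant operation by \eqref{normalization-tilde} — and let $n\to\infty$: by the $C^1_{\mathrm{loc}}$ convergence of $\widetilde\psi^i_{j,p_n}$ and dominated convergence (the weight $\rho^{M-3}$ is integrable at the origin since $M>2$) the left-hand side converges to a finite sum of terms $(A^i_j)^2\int\rho^{M-3}\eta^2\,d\rho$ over compact subintervals of $[0,\infty)$, which must then be $\ge c_0>0$. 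This is impossible if $\eta\equiv0$, so $\eta\not\equiv0$ and the $A^i_j$ are not all zero; unique continuation for the ODE \eqref{eq:finale} also makes each of those integrals strictly positive.

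I expect this non-vanishing step to be the main obstacle: a priori the $\mathcal L_M$-mass of $\psi_{j,p}$ could drift onto the scales lying in between consecutive bubbles, or escape towards $r=0$ or $r=1$, and ruling this out genuinely requires the interplay of the energy identity \eqref{line}, the uniform bound on $f_p$ (Lemma~\ref{stima-f_p-1}), and — most importantly — the smallness of $f_p$ away from the bubbles (Lemma~\ref{lemma-2.5}), which in turn rests on the fine asymptotics of the zeros and extremal values of $v_p$ from Section~\ref{sec:da-u-a-v}. Everything else — the spectral description of \eqref{eq:finale}, passage to the limit in the weak formulation, and the subsequence/diagonal argument needed to pass from subsequential convergence to convergence of the whole family — is already at hand from the lemmas above and the Appendix.
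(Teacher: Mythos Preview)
Your proof is correct and follows essentially the same route as the paper's. Both arguments reduce the statement, via Lemma~\ref{autofunz-limite} and Remark~\ref{remark-dim}, to showing that the limits $A^i_j\eta$ are not all zero, and both exploit the identity \eqref{line} together with the splitting of $(0,1)$ into ``bubble'' and ``gap'' regions furnished by Lemma~\ref{lemma-2.5}. The only cosmetic difference is that the paper argues by contradiction (assuming all $\widetilde\psi^i_{j,p}\to 0$ and deducing $\int_0^1 r^{M-3}f_p\psi_{j,p}^2\,dr\to 0$, contradicting $\nu_j(p)<-(M-1)$), whereas you argue directly, bounding $\sum_i\int_{B_{i,p}(K)} r^{M-3}\psi_{j,p}^2\,dr$ from below and passing to the limit; these are contrapositives of one another.
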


\begin{proof}
As mentioned in Remark \ref{remark-dim}, it suffices to rule out the possibility that for every $i=0,\dots,m-1$, 
\begin{equation}
\label{contradiction}
\widetilde{\psi}_{j,p}^i\to 0 \ \text{uniformly in any set $[R^{-1}, R]$ or $[0,R]$ as $i=0$.}
\end{equation}	
	We show here that if  \eqref{contradiction} holds true then
	\begin{equation}\label{contradiction-2}
\int_0^1 r^{M-3}f_p \psi_{j,p}^2 \, dr \to 0 ,
	\end{equation}
	where $f_p$ is as in \eqref{f_p}. This is not possible (and so the proof is completed) because repeating the computations in the proof of Lemma \ref{lemma-stime-insieme} gives 
\[\begin{split}
& -(M-1)>\nu_j(p)=\int_0^1r^{M-1}(\psi_{j,p}')^2\, dr-\int_0^1 r^{M-3}f_p(r) \psi_{j,p}^2 \, dr\geq -\int_0^1 r^{M-3}f_p(r) \psi_{j,p}^2 \, dr .
\end{split}
\]
To check \eqref{contradiction-2} we begin by taking any $\e>0$, applying Lemma \ref{lemma-2.5} and splitting the integral as
\begin{align*}
\int\limits_0^1 r^{M-3} f_p \psi_{j,p}^2 dr & =\int\limits_0^{K (\widetilde{ \mathcal M}_{0,p})^{-1}} r^{M-3} f_p \psi_{j,p}^2 dr  +\sum_{i=1}^{m-1}\int\limits_{(K\widetilde{ \mathcal M}_{i,p})^{-1}} ^{K(\widetilde{ \mathcal M}_{i,p})^{-1}}  r^{M-3} f_p  \psi_{j,p}^2 dr \\
&  +\sum_{i=1}^{m}\int_{G_{i,p}(K)}  r^{M-3} f_p \psi_{j,p}^2 dr 
 \end{align*}
where $K$ (and consequently $G_{i,p}(K)$)
is chosen in such a way to satisfy \eqref{piccolo}. So using also \eqref{normalization-tilde} we obtain
\begin{align*}   \sum_{i=1}^{m}\int_{G_{i,p}(K)}  r^{M-3} f_p \psi_{j,p}^2 dr  < \e  \int_0^1 r^{M-3} \psi_{j,p}^2 dr = \e .\end{align*} 
On the other hand exploiting the uniform convergence stated in \eqref{uniform-convergence} we also have
\begin{align*}
\int\limits_0^{K (\widetilde{ \mathcal M}_{0,p})^{-1}} r^{M-3} f_p \psi_{j,p}^2\, dr
& = p \int\limits_0^{K (\widetilde{ \mathcal M}_{0,p})^{-1}} r^{M-1} |v_{p}|^{p-1} \psi_{j,p}^2\, dr \\
& =  \int_0^K s^{M-1} W^0_p(\widetilde {\psi}_{j,p}^0)^2\, ds
\to \int_0^K s^{M-1} W (\widetilde {\psi}_{j}^0)^2\, ds=0
\intertext{ by \eqref{contradiction}, and similarly }
\int\limits_{(K\widetilde{ \mathcal M}_{i,p})^{-1}} ^{K(\widetilde{ \mathcal M}_{i,p})^{-1}}  r^{M-3} f_p  \psi_{j,p}^2\, dr
& = p \int\limits_{(K\widetilde{ \mathcal M}_{i,p})^{-1}} ^{K(\widetilde{ \mathcal M}_{i,p})^{-1}}  r^{M-1} |v_{p}|^{p-1} \psi_{j,p}^2\, dr \\
& =  \int_{K^{-1}}^K s^{M-1} W^i_p (\widetilde {\psi}_{j,p}^i)^2\, ds
\to \int_{K^{-1}}^K s^{M-1} W (\widetilde {\psi}_{j}^i)^2\, ds=0 .
\end{align*}
Summing up we have proved that $\limsup\limits_{p\to p_M} \int_0^1 r^{M-3}f_p \psi_{j,p}^2 \, dr < \e$ for every positive $\e$ which clearly gives \eqref{contradiction-2} since $f_p\ge 0$.

\end{proof}

\subsection{The last negative eigenvalue}\label{3:m}
As mentioned before, the last negative eigenvalue $\nu_m(p)$ has a different behavior from the first $m-1$ ones, which is enlightened by the different global bounds \eqref{nl<k-general-H} and \eqref{num>k-general-H}. In the case of Lane-Emden equation studied in \cite{DIPN>3}
the relation \eqref{num>k-general-H} is sufficient to determine its contribution to the Morse index, therefore there is no need for further investigation. This is not the case anymore for the H\'enon equation, where the exact computation of its limit is necessary to compute the asymptotic Morse index.

 To this aim a more detailed knowledge of the asymptotic behavior of the previous $m-1$ eigenfunctions may help.

\


\begin{lemma}\label{lem:int-va-zero}
	For every $\d>0$ there exist $\bar K >1$ and $\bar p \in (1,p_M)$ such that 
	\begin{equation}\label{pezzo-Gi-piccolo-0} 
	\int_{G_{i,p}(K)} r^{M-3}\psi_{j,p}^2 dr <\d \qquad \mbox{ as $i=1,\dots m$, $j=1,\dots m-1$,}
	\end{equation} 
	for every $K>\bar K$ and $p\in (\bar p, p_M)$. 
\end{lemma}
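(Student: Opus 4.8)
The plan is to bound the $\mathcal L_M$-mass carried by $\psi_{j,p}$ on each transition zone by a boundary flux, exploiting that $G_{i,p}(K)$ sits in a valley between two consecutive blow-up bubbles where the potential is negligible. First I would reduce the statement: since $G_{i,p}(K')\subseteq G_{i,p}(K)$ for $K'\ge K$, it is enough to produce, for a given $\delta>0$, one sufficiently large $\bar K>1$ and a corresponding $\bar p$ with $\int_{G_{i,p}(\bar K)}r^{M-3}\psi_{j,p}^2\,dr<\delta$. Write $G_{i,p}(K)=(a,b)$. By Lemma \ref{lemma-2.5} applied with $\e=\tfrac{M-1}{2}$ one has $f_p<\tfrac{M-1}{2}$ on $G_{i,p}(K)$ once $K$ is large and $p$ is close to $p_M$; combined with the global bound $\nu_j(p)<-(M-1)$ from \eqref{nl<k-general-H} (valid since $j\le m-1$), the coefficient of $\psi_{j,p}$ in \eqref{radial-general-H-no-c}, rewritten as $-(r^{M-1}\psi_{j,p}')'=r^{M-3}(f_p+\nu_j(p))\psi_{j,p}$, satisfies $f_p+\nu_j(p)\le-\tfrac{M-1}{2}<0$ all over $(a,b)$. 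Multiplying this equation by $\psi_{j,p}$, integrating on $(a,b)$ and integrating by parts (legitimate since $[a,b]$ is a compact subinterval of $(0,1)$ on which the coefficients are continuous, whence $\psi_{j,p}\in C^2([a,b])$) gives
\[
\tfrac{M-1}{2}\int_{G_{i,p}(K)}r^{M-3}\psi_{j,p}^2\,dr\le\int_a^b r^{M-1}(\psi_{j,p}')^2\,dr-\int_a^b r^{M-3}\bigl(f_p+\nu_j(p)\bigr)\psi_{j,p}^2\,dr=\bigl[r^{M-1}\psi_{j,p}'\psi_{j,p}\bigr]_a^b,
\]
so it remains to show that this boundary flux is small.

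The boundary flux is evaluated through the blow-up analysis of the eigenfunctions. The quantity $r^{M-1}\psi_{j,p}'\psi_{j,p}$ is scale invariant: by \eqref{rescaled-eigenf}, $\bigl(r^{M-1}\psi_{j,p}'\psi_{j,p}\bigr)\bigl(\rho/\widetilde{\mathcal M}_{\ell,p}\bigr)=\bigl(s^{M-1}(\widetilde\psi_{j,p}^{\ell})'\widetilde\psi_{j,p}^{\ell}\bigr)(\rho)$. The left endpoint $a=K\widetilde{\mathcal M}_{i-1,p}^{-1}$ corresponds to $s=K$ in the rescaling around the $(i-1)$-th bubble, while the right endpoint $b=K^{-1}\widetilde{\mathcal M}_{i,p}^{-1}$ corresponds to $s=1/K$ in the rescaling around the $i$-th bubble (for $i=m$ one has $b=1$ with $\psi_{j,p}(1)=0$, so that term vanishes exactly); by \eqref{hp-a} and \eqref{hp-c} these points lie inside the respective bubble supports as $p\to p_M$. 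Fix $K$. Along any sequence $p_n\to p_M$, Proposition \ref{prop:limit-nu-j<m} provides a subsequence with $\widetilde\psi_{j,p_n}^{\ell}\to A_j^{\ell}\eta_1$ in $C^1_{\mathrm{loc}}(0,\infty)$ for $\ell=0,\dots,m-1$, and $(A_j^{\ell})^2\le P^{-1}$ with $P:=\int_0^\infty s^{M-3}\eta_1^2\,ds\in(0,\infty)$, by Fatou and the normalization \eqref{normalization-tilde}. Hence
\[
\limsup_{p\to p_M}\Bigl|\bigl[r^{M-1}\psi_{j,p}'\psi_{j,p}\bigr]_a^b\Bigr|\le\frac{K^{M-1}\,\bigl|\eta_1'(K)\eta_1(K)\bigr|+K^{-(M-1)}\,\bigl|\eta_1'(1/K)\eta_1(1/K)\bigr|}{P}=:\omega(K).
\]

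To conclude, I would note from the explicit expression \eqref{eta} that $s^{M-1}\bigl|\eta_1'(s)\eta_1(s)\bigr|$ is $O(s^M)$ as $s\to0^+$ and $O(s^{-M})$ as $s\to+\infty$, so $\omega(K)\to0$ as $K\to+\infty$. Since only finitely many pairs $(i,j)$ occur, one may choose $\bar K$ so large that $\omega(\bar K)<\tfrac{M-1}{2}\delta$ for all of them, and then, by the $\limsup$ above, a $\bar p\in(1,p_M)$ such that $\bigl|[r^{M-1}\psi_{j,p}'\psi_{j,p}]_a^b\bigr|<\tfrac{M-1}{2}\delta$ for $p\in(\bar p,p_M)$; the flux inequality then yields $\int_{G_{i,p}(\bar K)}r^{M-3}\psi_{j,p}^2\,dr<\delta$, and the case $K>\bar K$ follows from $G_{i,p}(K)\subseteq G_{i,p}(\bar K)$.

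The step I expect to be the main obstacle is precisely this trapping: it hinges on the genuine smallness of the potential $f_p$ away from the bubble cores (Lemma \ref{lemma-2.5}, which itself rests on the full asymptotic description of $v_p$ in Section \ref{sec:da-u-a-v}) together with the decay of the limit profile $\eta_1$ and of its flux $s^{M-1}\eta_1'\eta_1$ at both $0$ and $\infty$, which is what forces $\omega(K)\to0$. A secondary but unavoidable chore is to check that $\psi_{j,p}$ is classical on each transition interval and that its endpoints fall strictly inside the appropriate bubble supports for $p$ near $p_M$ --- this is where \eqref{hp-a}--\eqref{hp-c} (proved while establishing Proposition \ref{conv-sol-v}) enter.
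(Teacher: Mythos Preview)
Your argument is correct and follows essentially the same route as the paper: multiply \eqref{radial-general-H-no-c} by $\psi_{j,p}$ on $G_{i,p}(K)$, integrate by parts, use Lemma~\ref{lemma-2.5} to kill the potential term, and control the resulting boundary flux via the $C^1_{\loc}$ convergence of the rescaled eigenfunctions to multiples of $\eta_1$ (Proposition~\ref{prop:limit-nu-j<m}) together with the decay of $s^{M-1}\eta_1\eta_1'$ at $0$ and $\infty$. The only stylistic difference is that the paper keeps a floating $\e$ in both the bound $f_p<\e$ and the bound $-\nu_j(p)>(M-1)(1-\e)$, whereas you fix $\e=\tfrac{M-1}{2}$ once and absorb everything into the single inequality $f_p+\nu_j(p)\le-\tfrac{M-1}{2}$; this is slightly cleaner but otherwise identical.
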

Here $G_{i,p}(K)=\left( K(\widetilde {\mathcal M}_{i-1,p})^{-1} , (K\widetilde {\mathcal M}_{i,p})^{-1} \right)$ denotes the subset of $(0,1)$ introduced in Lemma \ref{lemma-2.5}.
\begin{proof}
Let $\e \in (0,1/2)$. By Lemma \ref{lemma-2.5}  we can chose  $\bar K_1(\e)$ and $\bar p_1=p_1(\e,\bar K_1)$ in such a way that
for every $K\ge \bar K_1$ and $p\in (\bar p_1,p_M)$ we have 
	\begin{equation}\label{pezzo-f_p-piccolo}
	\int_{G_{i,p}(K)} r^{M-3}f_p\psi_{j,p}^2 dr  < \e \int_{G_{i,p}(K)} r^{M-3}\psi_{j,p}^2\underset{\eqref{normalization}}{\le} \e
	\end{equation}
	as $i=1,\dots,m$ and $j=1,\dots,m-1$.
	Hence multiplying equation \eqref{radial-general-H-no-c} for $\psi_{j,p}$ and integrating over $G_{i,p}(K)$ yields
	\begin{align}\nonumber 
	- \nu_j(p) \int_{G_{i,p}(K)} r^{M-3} \psi_{j,p}^2 dr & = \int_{G_{i,p}(K)} (r^{M-1}\psi_{j,p}')'\psi_{j,p}dr +\int_{G_{i,p}(K)} r^{M-3}f_p\psi^2_{j,p} dr \\\label{passo0} \underset{\eqref{pezzo-f_p-piccolo}}{<} & \int_{G_{i,p}(K)} (r^{M-1}\psi_{j,p}')'\psi_{j,p}dr +\e .\end{align}
	Next we write 
	\[\begin{array}{rlll}
	\a=& K(\widetilde {\mathcal M}_{i-1,p})^{-1} & \text{ as $i=1,\dots m$,} & \\
	\beta=& (K\widetilde {\mathcal M}_{i,p})^{-1}& \text{ if $i=1,\dots m-1$,} &  \quad \text{ or }\beta=1 \; \text{ if } i=m, 
	\end{array}\]
	so that $G_{i,p}(K)=(\alpha,\beta)$ and integrating by parts we have
	\begin{align*}
	\int_{G_{i,p}(K)} (r^{M-1}\psi_{j,p}')'\psi_{j,p}dr =  -\int_{G_{i,p}(K)} r^{M-1}(\psi_{j,p}')^2dr + \b^{M-1}\psi_{j,p}'(\beta) \psi_{j,p}(\beta)  \\
	- \a^{M-1}\psi_{j,p}'(\a) \psi_{j,p}(\a).
	\end{align*}
	But by the definition of $\widetilde \psi_{j,p}$ we have
	\begin{align*} 
	\a^{M-1}\psi_{j,p}'(\a) \psi_{j,p}(\a)& =K^{M-1} \widetilde \psi^{i-1}_{j,p}(K)\, (\widetilde \psi^{i-1}_{j,p})'(K), \\
	\beta^{M-1}\psi_{j,p}'(\beta) \psi_{j,p}(\beta)&=K^{1-M} \widetilde \psi^i_{j,p}(K^{-1})\,(\widetilde \psi_{j,p}^i)'(K^{-1}),
	\intertext{if $i=1,\dots, m-1$, or}
	\beta^{M-1}\psi_{j,p}'(\beta) \psi_{j,p}(\beta)&= 0
	\end{align*}
	if $i=m$.
	Therefore the convergence in Proposition \ref{prop:limit-nu-j<m}  implies that when $p\to p_M$
	\begin{align*}
	& \b^{M-1}\psi_{j,p}'(\beta) \psi_{j,p}(\beta)  - \a^{M-1}\psi_{j,p}'(\a) \psi_{j,p}(\a) \\
	& \to
	(A^i_j)^2 K^{1-M} \eta_1(K^{-1})\,\eta_1'(K^{-1}) - (A^{i-1}_j)^2 K^{M-1} \eta_1(K)\,\eta_1'(K) 
	\intertext{ if $i=1,\dots m-1$, or }
	& \to  -(A^{m-1}_j)^2 K^{M-1} \eta_1(K)\,\eta_1'(K)
	\end{align*}
	if $i=m$. 
	Besides there exists $\bar K\ge  \bar K_1$ so that for any $K>\bar K$
	\begin{equation}\label{pezzo-integrato-piccolo}
	-\e < K^{M-1} \eta_1(K)\eta_1'(K) <  K^{1-M}\eta_1(K^{-1})\eta_1'(K^{-1}) < \e .
	\end{equation}
	This choice is possible because $\eta_1$ has only one critical point, which is a maximum,  and $\eta_1(t)$ , $t^{M-1} \eta_1(t)\eta_1'(t) \to 0$ as $t\to 0$ and $t\to \infty$.
	Then  we can chose $p_2=p_2(\e, \bar K)$ in such a way that 
	\begin{align*}\label{eq:passo}
	\int_{G_{i,p}(K)} (r^{M-1}\psi_{j,p}')'\psi_{j,p}dr < & -\int_{G_{i,p}(K)} r^{M-1}(\psi_{j,p}')^2dr  + A \e  \le A \e 
	\end{align*}
	for $p\in (p_2, p_M)$ for any $K>\bar K$. Here the constant $A$ only depends by the coefficients $A^i_j$.
	Inserting this bound into \eqref{passo0} gives
	\begin{align*}
	-\nu_j(p) \int_{G_{i,p}(K)} r^{M-3} \psi_{j,p}^2 dr < (1+A)\e 
	\end{align*}
	in the same range of the parameter $p$.
	Moreover \ref{prop:limit-nu-j<m} yields that also  $-\nu_j(p) > (M-1)(1-\e)$, possibly increasing $p_2$.
	Hence recalling that $\e<1/2$ we get 
	\[ \int_{G_{i,p}(K)} r^{M-3} \psi_{j,p}^2 dr < \frac{1 + A}{M-1} \  \frac{\e}{1-\e} \le  C \e \] 
	where $C$ only depends by $A$ and $M$, and this concludes the proof.
\end{proof}

\begin{lemma}\label{lem:ortog-limite}
	The constants $A_j^i$ in Proposition \ref{prop:limit-nu-j<m} satisfy
	\begin{equation}\label{normalization-2}
	\sum_{i=0}^{m-1}A_j^iA_k^i \int\limits_{0}^{+\infty} r^{M-3} \eta_1^2dr =\delta_{jk}
	\end{equation}
	for every $j,k=1,\dots,m-1$.
\end{lemma}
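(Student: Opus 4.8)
The plan is to localize the orthonormality relation $\int_0^1 r^{M-3}\psi_{j,p}\psi_{k,p}\,dr=\delta_{jk}$ of \eqref{normalization} at the $m$ bubble scales $\widetilde{\mathcal M}_{0,p}^{-1},\dots,\widetilde{\mathcal M}_{m-1,p}^{-1}$. For a fixed $K>1$ I would split $(0,1)$ into the $m$ \emph{core} intervals $C_{0,p}(K)=\bigl(0,K\widetilde{\mathcal M}_{0,p}^{-1}\bigr)$ and $C_{i,p}(K)=\bigl((K\widetilde{\mathcal M}_{i,p})^{-1},K\widetilde{\mathcal M}_{i,p}^{-1}\bigr)$ for $i=1,\dots,m-1$, together with the $m$ transition zones $G_{i,p}(K)$, $i=1,\dots,m$, of Lemma \ref{lemma-2.5}. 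Since $\widetilde{\mathcal M}_{m-1,p}\to\infty$ by \eqref{massimi-pcritico-v}, while $\widetilde{\mathcal M}_{i-1,p}/\widetilde{\mathcal M}_{i,p}\to\infty$ because $t_{i,p}\widetilde{\mathcal M}_{i-1,p}\to\infty$ and $t_{i,p}\widetilde{\mathcal M}_{i,p}\to0$ (see \eqref{hp-a}--\eqref{hp-c}), these $2m$ intervals are ordered consistently and cover $(0,1)$ as soon as $p$ is close enough to $p_M$ (depending on the chosen $K$).

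On each core interval I change variables $s=r\widetilde{\mathcal M}_{i,p}$; by \eqref{rescaled-eigenf} the powers of $\widetilde{\mathcal M}_{i,p}$ cancel exactly — consistently with the normalization \eqref{normalization-tilde} — so that
\[ \int_{C_{i,p}(K)} r^{M-3}\psi_{j,p}\psi_{k,p}\,dr=\int_{\omega_i} s^{M-3}\,\widetilde\psi^i_{j,p}(s)\,\widetilde\psi^i_{k,p}(s)\,ds, \]
where $\omega_0=(0,K)$ and $\omega_i=(K^{-1},K)$ for $i\ge1$. On the transition zones, Cauchy--Schwarz combined with Lemma \ref{lem:int-va-zero} yields $\bigl|\int_{G_{i,p}(K)} r^{M-3}\psi_{j,p}\psi_{k,p}\,dr\bigr|<\delta$ once $K>\bar K(\delta)$ and $p$ is close to $p_M$, so their total contribution is $<m\delta$. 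Putting the pieces together, for every $\delta>0$ and every $K>\bar K(\delta)$ there is $p_{\delta,K}<p_M$ such that
\[ \Bigl|\delta_{jk}-\int_0^K s^{M-3}\widetilde\psi^0_{j,p}\widetilde\psi^0_{k,p}\,ds-\sum_{i=1}^{m-1}\int_{K^{-1}}^K s^{M-3}\widetilde\psi^i_{j,p}\widetilde\psi^i_{k,p}\,ds\Bigr|<m\delta \]
for all $p\in(p_{\delta,K},p_M)$.

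To conclude I would pass to the limit along the subsequence of Proposition \ref{prop:limit-nu-j<m}, on which $\widetilde\psi^i_{j,p}\to A^i_j\eta_1$ in $C^1_{\loc}(0,\infty)$ for every $i$, and in $C^1_{\loc}[0,\infty)$ for $i=0$. Each core integral then converges to $A^i_jA^i_k\int_{\omega_i}s^{M-3}\eta_1^2\,ds$ — when $2<M<3$ the weight $s^{M-3}$ is singular at the origin but integrable, so for $i=0$ one invokes dominated convergence using the uniform bound on $[0,K]$ coming from the $C^1_{\loc}[0,\infty)$ convergence. This gives $\bigl|\delta_{jk}-\sum_{i=0}^{m-1}A^i_jA^i_k\int_{\omega_i}s^{M-3}\eta_1^2\,ds\bigr|\le m\delta$ for every $K>\bar K(\delta)$. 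Letting $K\to\infty$, monotone convergence sends each $\int_{\omega_i}s^{M-3}\eta_1^2\,ds$ to $\int_0^\infty s^{M-3}\eta_1^2\,ds$, which is finite since $\eta_1(s)\sim s$ as $s\to0$ and $\eta_1(s)\sim s^{1-M}$ as $s\to\infty$. Hence $\bigl|\delta_{jk}-\bigl(\sum_{i=0}^{m-1}A^i_jA^i_k\bigr)\int_0^\infty s^{M-3}\eta_1^2\,ds\bigr|\le m\delta$ for all $\delta>0$, which is exactly \eqref{normalization-2}.

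The only genuinely delicate ingredient is the uniform smallness of the $\mathcal L_M$-mass of the eigenfunctions on the transition zones, that is Lemma \ref{lem:int-va-zero}, which I take as already established; everything else is bookkeeping with the scaling \eqref{rescaled-eigenf} and the local convergence of Proposition \ref{prop:limit-nu-j<m}.
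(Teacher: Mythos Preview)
Your proof is correct and follows essentially the same route as the paper: split $(0,1)$ into the $m$ core intervals around the scales $\widetilde{\mathcal M}_{i,p}^{-1}$ and the transition zones $G_{i,p}(K)$, control the latter via Cauchy--Schwarz and Lemma~\ref{lem:int-va-zero}, and pass to the limit on the cores using the $C^1_{\loc}$ convergence of Proposition~\ref{prop:limit-nu-j<m} together with the integrability of $s^{M-3}\eta_1^2$. The paper packages the two limits $p\to p_M$ and $K\to\infty$ into a single $\varepsilon$-argument rather than taking them sequentially, but the ingredients and the logic are the same.
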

\begin{proof}
Let 
\[H(p) : = \int_0^1 r^{M-3}\psi_{j,p} \psi_{k,p} dr- \sum_{i=0}^{m-1}A_j^iA_k^i \int\limits_{0}^{+\infty} r^{M-3} \eta_1^2 dr .\]
By \eqref{normalization} we have 
\[ \delta_{jk} - \sum_{i=0}^{m-1}A_j^iA_k^i \int\limits_{0}^{+\infty} r^{M-3} \eta_1^2 dr = H(p)\]
for every $p\in (1,p_M)$, and the claim can be proved by showing that $H(p_n)\to 0$ for the sequence $p_n$ which realizes
\[ \widetilde \psi^i_{j,p_n} \to A^i_j \eta_1 \]
as $i=0,\dots m-1$ and  $j=1,\dots m-1$,	according to Proposition \ref{prop:limit-nu-j<m}.
More precisely we will show that for any $\e>0$ we can chose $\bar n$ in such a way that $|H(p_n)| < \e$ as $n >\bar n$.
To not make notation even heavier, in the following we shall write $p$ meaning $p_n$, and $p\in (\bar p, p_M)$ meaning $n > \bar n$.

Let $K>1$ a parameter to be chosen later on according to $\e$; we split the interval $(0,1)$ in the same way used in Lemma \ref{lemma-2.5} and write
\begin{align*}
H(p) & = \sum_{i=1}^m \int\limits_{G_{i,p}(K)} r^{M-3}\psi_{j,p} \psi_{k,p} dr+\int\limits_0^{K (\widetilde{ \mathcal M}_{0,p})^{-1}} r^{M-3}\psi_{j,p} \psi_{k,p}dr \\ 
& +  \sum_{i=1}^{m-1} \int\limits_{(K \widetilde{ \mathcal M}_{i,p})^{-1}}^{ K (\widetilde{ \mathcal M}_{i,p})^{-1} }r^{M-3}\psi_{j,p} \psi_{k,p} dr   - \sum_{i=0}^{m-1}A_j^iA_k^i \int\limits_{0}^{+\infty} r^{M-3} \eta_1^2 dr .
\end{align*}
Now 
\[
\left|\int\limits_{G_{i,p}(K)} r^{M-3}\psi_{j,p}\psi_{k,p}  dr \right| \le \left(\int\limits_{G_{i,p}(K)} r^{M-3}\psi_{j,p}^2 dr\right)^{\frac{1}{2}}\left(\int\limits_{G_{i,p}(K)} r^{M-3}\psi_{k,p}^2 dr\right)^{\frac{1}{2}}  ,
\]%
so Lemma \ref{lem:int-va-zero} yields that we can chose $\bar K_0=\bar K_0(\e)$ and $\bar p_0=\bar p_0(\e, K_0)$ in such a way that 
\begin{equation}\label{pezzo-Gi-piccolo-0} 
\left|\int\limits_{G_{i,p}(K)} r^{M-3}\psi_{j,p}\psi_{k,p}  dr \right|  <\e/3m
\end{equation} 
for $K\ge  \bar K_0$ and $p\in (\bar p_0, p_M)$.
\\
Besides rescaling and using the convergence in Proposition \ref{prop:limit-nu-j<m}, it is easy to see that for every $K$
\[\begin{split}
&\int\limits_0^{K (\widetilde{ \mathcal M}_{0,p})^{-1}} r^{M-3}\psi_{j,p} \psi_{k,p} dr =\int\limits_{0}^K r^{M-3} \widetilde{\psi}_{j,p}^0\widetilde{\psi}_{k,p}^0 dr\to  A_j^0A_k^0 \int\limits_{0}^K r^{M-3} \eta_1^2dr 
\end{split}\]
as $p\to p_M$, as well as 
\[\begin{split}
& \int\limits_{(K \widetilde{ \mathcal M}_{i,p})^{-1}}^{ K (\widetilde{ \mathcal M}_{i,p})^{-1} }r^{M-3}\psi_{j,p} \psi_{k,p} dr =\int\limits_{K^{-1}}^K r^{M-3} \widetilde{\psi}_{j,p}^i\widetilde{\psi}_{k,p}^i dr\to  A_j^iA_k^i \int\limits_{K^{-1}}^K r^{M-3} \eta_1^2 dr 
\end{split}
\]
for $i=1,\dots,m-1$.
Since $r^{M-3}\eta_1^2\in L^1(0,\infty)$, there exists $\bar K_1=\bar K_1(\e)>1$ such that 
\[
|A^0_j A^0_k| \int_K^{\infty} r^{M-3} \eta_1^2 dr  +  \sum_{i=1}^{m-1} |A^i_j A^i_k| \left( \int\limits_0^{K^{-1}} r^{M-3} \eta_1^2 dr  +
\int\limits_K^\infty r^{M-3} \eta_1^2 dr
\right)< \e/3
\]

as $K>\bar K_1$ and consequently for any $K>\bar K_1$ we can chose $p_1 = p_1(\e, K)$ in such a way that
\begin{equation} \label{altro-pezzo-piccolo}\begin{split}
\left|\int\limits_0^{K (\widetilde{ \mathcal M}_{0,p})^{-1}} r^{M-3}\psi_{j,p} \psi_{k,p}dr+  \sum_{i=1}^{m-1} \int\limits_{(K \widetilde{ \mathcal M}_{i,p})^{-1}}^{ K (\widetilde{ \mathcal M}_{i,p})^{-1} }r^{M-3}\psi_{j,p} \psi_{k,p} dr  \right.\\
\left.- \sum_{i=0}^{m-1}A_j^iA_k^i \int\limits_{0}^{+\infty} r^{M-3} \eta_1^2 dr \right| < 2\e/3
\end{split}	\end{equation}    
for every $p\in (p_1, p_M)$.
 Putting together \eqref{pezzo-Gi-piccolo-0} and \eqref{altro-pezzo-piccolo} gives the claim.
\end{proof}

\remove{\AL \begin{proof}[Dim di Lemma \ref{lem:ortog-limite} incluso Lemma \ref{lem:int-va-zero}]
		
	It remains to show that for a suitably chosen $K>K_0 $ there exists $\bar p \in (p_0(K,\e) , p_M)$ so that 
\begin{equation}\label{claim1a}	 \left| \int\limits_{G_{i,p}(K)} r^{M-3}\psi_{j,p} \psi_{k,p} dr\right| < C\e \end{equation}
		for every $p\in (\bar p, p_M)$ and $i=1,\dots m$.
Since 
	\[
\left|\int\limits_{G_{i,p}(K)} r^{M-3}\psi_{j,p}\psi_{k,p}  dr \right| \le \left(\int\limits_{G_{i,p}(K)} r^{M-3}\psi_{j,p}^2 dr\right)^{\frac{1}{2}}\left(\int\limits_{G_{i,p}(K)} r^{M-3}\psi_{k,p}^2 dr\right)^{\frac{1}{2}} , \]
it suffices to check 
\begin{equation}\label{claim2a}	  \int\limits_{G_{i,p}(K)} r^{M-3}\psi_{j,p}^2 < C \e \end{equation}
 instead of \eqref{claim1a}.
\\
First, we choose $K=K(\e)> K_0(\e)$ according to Lemma \ref{lemma-2.5}  in such a way that there exists $p_1=p_1(K,\e)$ so that 
		\begin{equation}\label{pezzo-f_p-piccolo}
		\int_{G_{i,p}(K)} r^{M-3}f_p\psi_{j,p}^2 < \e \int_{G_{i,p}(K)} r^{M-3}\psi_{j,p}^2\underset{\eqref{normalization}}{\le} \e
		\end{equation}
		as $i=1,\dots,m$, $j=1,\dots,m-1$, for $K> K_1$ and $p\in (p_1,p_M)$.
		Moreover we can increase $K$ so that
		\begin{equation}\label{pezzo-integrato-piccolo}
		-\e < K^{M-1} \eta_1(K)\eta_1'(K) <  K^{1-M}\eta_1(K^{-1})\eta_1'(K^{-1}) < \e .
		\end{equation}
		This choice is possible because $\eta_1$ has only one critical point, which is a maximum,  and $\eta_1(t)$ , $t^{M-1} \eta_1'(t)\eta(t) \to 0$ as $t\to 0$ and $t\to \infty$.
		
		Now multiplying equation \eqref{radial-general-H-no-c} for $\psi_{j,p}$ and integrating over $G_{i,p}(K)$ yields
		\begin{align}\nonumber 
		- \nu_j(p) \int_{G_{i,p}(K)} r^{M-3} \psi_{j,p}^2 dr & = \int_{G_{i,p}(K)} (r^{M-1}\psi_{j,p}')'\psi_{j,p}dr +\int_{G_{i,p}(K)} r^{M-3}f_p\psi^2_{j,p} dr \\\label{passo0} \underset{\eqref{pezzo-f_p-piccolo}}{<} & \int_{G_{i,p}(K)} (r^{M-1}\psi_{j,p}')'\psi_{j,p}dr +\e .\end{align}
		Next we write 
		\[\begin{array}{rlll}
		\a=& K(\widetilde {\mathcal M}_{i-1,p})^{-1} & \text{ as $i=1,\dots m$,} & \\
		\beta=& (K\widetilde {\mathcal M}_{i,p})^{-1}& \text{ if $i=1,\dots m-1$,} &  \quad \text{ or }\beta=1 \; \text{ if } i=m, 
		\end{array}\]
		so that $G_{i,p}(K)=(\alpha,\beta)$ and integrating by parts we have
		\begin{align*}
		\int_{G_{i,p}(K)} (r^{M-1}\psi_{j,p}')'\psi_{j,p}dr =  -\int_{G_{i,p}(K)} r^{M-1}(\psi_{j,p}')^2dr + \b^{M-1}\psi_{j,p}'(\beta) \psi_{j,p}(\beta)  \\
		- \a^{M-1}\psi_{j,p}'(\a) \psi_{j,p}(\a).
		\end{align*}
		But by the definition of $\widetilde \psi_{j,p}$ we have
		\begin{align*} 
		\a^{M-1}\psi_{j,p}'(\a) \psi_{j,p}(\a)& =K^{M-1} \widetilde \psi^{i-1}_{j,p}(K)\, (\widetilde \psi^{i-1}_{j,p})'(K), \\
		\beta^{M-1}\psi_{j,p}'(\beta) \psi_{j,p}(\beta)&=K^{1-M} \widetilde \psi^i_{j,p}(K^{-1})\,(\widetilde \psi_{j,p}^i)'(K^{-1}),
		\intertext{if $i=1,\dots, m-1$, or}
		\beta^{M-1}\psi_{j,p}'(\beta) \psi_{j,p}(\beta)&= 0
		\end{align*}
		if $i=m$.
		Therefore the convergence in Proposition \ref{prop:limit-nu-j<m}  implies that when $p\to p_M$
		\begin{align*}
		& \b^{M-1}\psi_{j,p}'(\beta) \psi_{j,p}(\beta)  - \a^{M-1}\psi_{j,p}'(\a) \psi_{j,p}(\a) \\
		& \to
		(A^i_j)^2 K^{1-M} \eta_1(K^{-1})\,\eta_1'(K^{-1}) - (A^{i-1}_j)^2 K^{M-1} \eta_1(K)\,\eta_1'(K) 
		\intertext{ if $i=1,\dots m-1$, or }
		& \to  -(A^{m-1}_{ j})^2 K^{M-1} \eta_1(K)\,\eta_1'(K)
		\end{align*}
		if $i=m$. 
		In any case the condition \eqref{pezzo-integrato-piccolo} ensures that we can chose $p_2=p_2(K,\e)$
		\begin{align*}\label{eq:passo}
		\int_{G_{i,p}(K)} (r^{M-1}\psi_{j,p}')'\psi_{j,p}dr < & -\int_{G_{i,p}(K)} r^{M-1}(\psi_{j,p}')^2dr  + A \e  \le A \e 
		\end{align*}
		for $p\in (p_2(K,\e), p_M)$. Here the constant $A$ only depends by the coefficients $A^i_j$.
		Inserting this bound into \eqref{passo0} gives
		\begin{align*}
		-\nu_j(p) \int_{G_{i,p}(K)} r^{M-3} \psi_{j,p}^2 dr < (1+A)\e 
		\end{align*}
		in the same range of the parameter $p$.
		Moreover \ref{prop:limit-nu-j<m} yields that also  $-\nu_j(p) > (M-1)(1-\e)$, possibly increasing $p_1(K,\e)$.
		Hence 
		\[ \int_{G_{i,p}(K)} r^{M-3} \psi_{j,p}^2 dr < \frac{1 + A}{M-1} \frac{\e}{1-\e} \le  C \e \] 
		where $C$ only depends by $A$ and $M$. Here we have taken that $\e<1/2$, which is not restrictive to our purpose. In this way we have obtained \eqref{claim2a}, and the proof is completed.
\end{proof}}

\begin{corollary}\label{rem-index-k}
	There exists an index $k\in\{0,1,\dots m-1\}$ such that 
	\[\sum_{j=1}^{m-1}(A_j^{k})^2 < \left(\int_0^{\infty} t^{M-3}\eta_1^2 dt \right)^{-1}.\]
\end{corollary}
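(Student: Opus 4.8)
The plan is to obtain the statement from the orthonormality relations of Lemma~\ref{lem:ortog-limite} by a simple averaging (pigeonhole) argument. Set $I:=\int_0^\infty t^{M-3}\eta_1^2\,dt$; this is a positive and finite number, as one checks directly from the explicit expression \eqref{eta} for $\eta_1$ (the integrand behaves like $t^{M-1}$ near $0$ and like $t^{-M-1}$ at infinity). Lemma~\ref{lem:ortog-limite} states precisely that $I\sum_{i=0}^{m-1}A_j^iA_k^i=\delta_{jk}$ for all $j,k\in\{1,\dots,m-1\}$.

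Taking $j=k$ in this identity and summing over $j=1,\dots,m-1$ gives
\[
I\sum_{i=0}^{m-1}\Big(\sum_{j=1}^{m-1}(A_j^i)^2\Big)=\sum_{j=1}^{m-1}\Big(I\sum_{i=0}^{m-1}(A_j^i)^2\Big)=\sum_{j=1}^{m-1}1=m-1.
\]
Hence the $m$ nonnegative numbers $c_i:=\sum_{j=1}^{m-1}(A_j^i)^2$, $i=0,\dots,m-1$, have sum $(m-1)/I$, so their minimum does not exceed their average:
\[
\min_{0\le i\le m-1}c_i\le\frac1m\cdot\frac{m-1}{I}=\frac{m-1}{mI}.
\]
Choosing $k\in\{0,1,\dots,m-1\}$ that realizes this minimum and using $\frac{m-1}{m}<1$ we obtain $c_k<\frac1I=\big(\int_0^\infty t^{M-3}\eta_1^2\,dt\big)^{-1}$, which is exactly the assertion.

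There is essentially no obstacle here: all the substance is already contained in Lemma~\ref{lem:ortog-limite}, and the remaining step is elementary. The only point worth a word of care is the strictness of the final inequality, which is granted for free by the factor $\frac{m-1}{m}<1$ separating the average of the $c_i$ from the threshold $1/I$; this spare room is precisely what will be exploited in the next subsection, where one shows that the last eigenfunction $\psi_{m,p}$ must carry a nonvanishing component along $\eta_1$ on the $k$-th scale.
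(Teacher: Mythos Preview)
Your proof is correct and follows essentially the same approach as the paper: both arguments take the diagonal of the orthonormality relations in Lemma~\ref{lem:ortog-limite}, sum over $j$, and apply the pigeonhole/averaging observation that among $m$ nonnegative numbers summing to $(m-1)/I$ at least one is at most $\frac{m-1}{m}\cdot\frac{1}{I}<\frac{1}{I}$. Your added remarks on the finiteness of $I$ and on why the inequality is strict are helpful but do not change the argument.
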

\begin{proof}
	Let $C= \left(\int_0^{\infty} t^{M-3}\eta_1^2 dt \right)^{-1}$.  Using \eqref{normalization-2} with $j=k$ we immediately have
		\[
		\sum_{i=0}^{m-1}(A_j^{i})^2 =C\]
		for every $j=1,\dots m-1$.
	Therefore 
	\[ \sum_{i=0}^{m-1}\left( \sum_{j=1}^{m-1}(A_j^i)^2\right)  = \sum_{j=1}^{m-1}\left( \sum_{i=0}^{m-1}(A_j^i)^2\right) = (m-1) C .\]
	Since all the $m$ terms $\sum\limits_{j=1}^{m-1}(A_j^i)^2$ are nonnegative, at least one among them should satisfy
	\[ \sum_{j=1}^{m-1}(A_j^i)^2 \le \frac{m-1}{m} C < C .\]
\end{proof}

Such index $k$ will play a role in the proof of next proposition, which is the main result in the present subsection.

\begin{proposition}\label{prop:num-lim} 	
We have 
\[ \lim\limits_{p\to p_M} \nu_m(p)= -(M-1) .\]
Moreover for any sequence $p_n$ in $(1,p_M)$ with $p_n\to p_M$ there exist an extracted sequence (that we still denote by $p_n$), and $m$ numbers $A^0_m, \dots A^{m-1}_m\in \R$ such that
	\[\widetilde{\psi}_{m,p_n}^i\to A^i_m\eta_1\] 
	weakly in ${\mathcal D}_{M}(0,\infty)$ and  strongly  in $C^1_{\loc} (0,\infty)$. 
	\end{proposition}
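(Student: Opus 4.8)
The plan is to reduce, as usual, to a comparison argument based on the variational characterization \eqref{nu-var} of the last singular eigenvalue. By \eqref{num>k-general-H} we have $-(M-1)<\nu_m(p)<0$ for every $p$, so it suffices to produce, along an arbitrary sequence $p_n\to p_M$, admissible competitors for the $m$-th infimum in \eqref{nu-var} whose Rayleigh quotient tends to $-(M-1)$; the statement about the rescaled eigenfunctions will then be an immediate consequence of Lemma \ref{autofunz-limite} and Remark \ref{remark-dim}. So fix $p_n\to p_M$ and, passing to a subsequence, assume $\nu_m(p_n)\to\bar\nu_m\in[-(M-1),0]$ and, for $j=1,\dots,m-1$ and $i=0,\dots,m-1$, $\widetilde\psi^i_{j,p_n}\to A^i_j\,\eta_1$ in $C^1_{\mathrm{loc}}(0,\infty)$, as granted by Proposition \ref{prop:limit-nu-j<m}. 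Let $k\in\{0,\dots,m-1\}$ be the index furnished by Corollary \ref{rem-index-k}; writing $L:=\int_0^\infty r^{M-3}\eta_1^2\,dr$ and $S:=\sum_{j=1}^{m-1}(A^k_j)^2$, we then have $SL<1$, and also $S E_R\le S\widehat E_R\le SL<1$ with $\widehat E_R,E_R$ as below.

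For a large parameter $R>1$ pick $\zeta_R\in C^\infty_0(0,\infty)$ with $0\le\zeta_R\le1$ and $\zeta_R\equiv1$ on $[R^{-1},R]$, and set
\[ \Phi_p(r):=\big(\widetilde{\mathcal M}_{k,p}\big)^{\frac{M-2}{2}}\,(\zeta_R\eta_1)\big(\widetilde{\mathcal M}_{k,p}\,r\big). \]
Since $\widetilde{\mathcal M}_{k,p}t_{k,p}\to0$ and $\widetilde{\mathcal M}_{k,p}t_{k+1,p}\to+\infty$ (this is exactly \eqref{hp-a}--\eqref{hp-c}, established in Section \ref{sec:da-u-a-v}), for $p$ near $p_M$ the support of $\Phi_p$ lies strictly inside the $(k{+}1)$-th nodal zone of $v_p$, hence $\Phi_p\in H^1_{0,M}$. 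The rescaling \eqref{rescaled-eigenf} preserves both the $\mathcal L_M$-scalar product (by \eqref{normalization-tilde}) and the Dirichlet integral (by \eqref{normalization-tilde-primo}), and turns $p|v_p|^{p-1}$ into $W^k_p$; therefore, with $\widehat E_R:=\int_0^\infty r^{M-3}\zeta_R^2\eta_1^2\,dr$ and $E_R:=\int_0^\infty r^{M-3}\zeta_R\eta_1^2\,dr$, one has for $p$ near $p_M$
\[ \int_0^1 r^{M-3}\Phi_p^2\,dr=\widehat E_R,\qquad c_{j,p}:=\int_0^1 r^{M-3}\Phi_p\psi_{j,p}\,dr=\int_0^\infty r^{M-3}(\zeta_R\eta_1)\,\widetilde\psi^k_{j,p}\,dr\xrightarrow[p\to p_M]{}A^k_j\,E_R \]
(using the $C^1_{\mathrm{loc}}(0,\infty)$ convergence of $\widetilde\psi^k_{j,p_n}$ on the compact support of $\zeta_R\eta_1$), and
\[ \int_0^1 r^{M-1}\!\big((\Phi_p')^2-p|v_p|^{p-1}\Phi_p^2\big)dr=\int_0^\infty r^{M-1}\!\big(((\zeta_R\eta_1)')^2-W^k_p\,(\zeta_R\eta_1)^2\big)dr\xrightarrow[p\to p_M]{}Q_R, \]
where $Q_R:=\int_0^\infty r^{M-1}\big(((\zeta_R\eta_1)')^2-W(\zeta_R\eta_1)^2\big)dr$, by the locally uniform convergence $W^k_p\to W$ in \eqref{uniform-convergence}.

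Now put $\Psi_p:=\Phi_p-\sum_{j=1}^{m-1}c_{j,p}\,\psi_{j,p}$, which is orthogonal in $\mathcal L_M$ to $\psi_{1,p},\dots,\psi_{m-1,p}$ and hence admissible in \eqref{nu-var} for $i=m$. Using the weak equation \eqref{radial-general-weak-H-no-c} for $\psi_{j,p}$ (tested against $\Psi_p$ and against $\Phi_p$) together with the orthonormality \eqref{normalization}, a short computation gives
\[ \int_0^1 r^{M-1}\!\big((\Psi_p')^2-p|v_p|^{p-1}\Psi_p^2\big)dr=\int_0^1 r^{M-1}\!\big((\Phi_p')^2-p|v_p|^{p-1}\Phi_p^2\big)dr-\sum_{j=1}^{m-1}\nu_j(p)\,c_{j,p}^2, \]
and $\int_0^1 r^{M-3}\Psi_p^2\,dr=\widehat E_R-\sum_{j=1}^{m-1}c_{j,p}^2$, so by \eqref{nu-var} the quantity $\nu_m(p)$ is bounded above by the ratio of the two. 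Letting $p\to p_M$ along the subsequence and using $\nu_j(p)\to-(M-1)$ for $j<m$ (Proposition \ref{prop:limit-nu-j<m}) yields
\[ \bar\nu_m\ \le\ \frac{Q_R+(M-1)\,E_R^2S}{\widehat E_R-E_R^2S}, \]
where the denominator is positive for $R$ large (hence also $\Psi_p\neq0$ for $p$ near $p_M$) because $\widehat E_R,E_R\to L$ and $SL<1$. Finally let $R\to\infty$: then $\widehat E_R,E_R\to L$ and, testing the weak equation \eqref{eq:finale-weak} for $\eta=\eta_1$ (with $\beta=-(M-1)$) against $\eta_1$ itself, $Q_R\to\int_0^\infty r^{M-1}(\eta_1')^2\,dr-\int_0^\infty r^{M-1}W\eta_1^2\,dr=-(M-1)L$; hence the right‑hand side tends to $(M-1)L(SL-1)/(L(1-SL))=-(M-1)$. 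Therefore $\bar\nu_m\le-(M-1)$, which with \eqref{num>k-general-H} forces $\bar\nu_m=-(M-1)$; since $p_n\to p_M$ was arbitrary, $\lim_{p\to p_M}\nu_m(p)=-(M-1)$. The eigenfunction assertion then follows at once: given $p_n\to p_M$, Lemma \ref{autofunz-limite} supplies a subsequence along which $\widetilde\psi^i_{m,p_n}\to A^i_m\eta$ weakly in $\mathcal D_M(0,\infty)$ and in $C^1_{\mathrm{loc}}(0,\infty)$, with $\eta$ a weak solution of \eqref{eq:finale} for $\beta=\bar\nu_m=-(M-1)$; by Remark \ref{remark-dim} either all the $A^i_m$ vanish or $\eta=\eta_1$, and in both cases $\widetilde\psi^i_{m,p_n}\to A^i_m\eta_1$.

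The only genuinely delicate point is keeping the denominator $\widehat E_R-E_R^2S$ bounded away from zero: this is precisely where the strict inequality $SL<1$ of Corollary \ref{rem-index-k} is essential, and it rests in turn on the sharp orthonormalization of Lemma \ref{lem:ortog-limite} and on the fact, proved in Section \ref{sec:da-u-a-v}, that the $k$-th rescaled nodal domain invades $(0,\infty)$ (so that $\Phi_p$ really is supported in a single nodal zone and $W^k_p\to W$ on compact sets). Once that bound is secured, the algebraic cancellation $(M-1)L(SL-1)/(L(1-SL))=-(M-1)$ — which renders the comparison insensitive to the exact value of $S$ — does the rest.
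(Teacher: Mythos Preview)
Your proof is correct and follows essentially the same route as the paper: build a competitor for the $m$-th Rayleigh quotient by rescaling a cut-off $\eta_1$ into the $k$-th nodal zone selected via Corollary \ref{rem-index-k}, orthogonalize against $\psi_{1,p},\dots,\psi_{m-1,p}$, and pass to the limit using Proposition \ref{prop:limit-nu-j<m} and \eqref{uniform-convergence}. Two small slips worth fixing: the chain $E_R\le\widehat E_R$ is backwards (since $\zeta_R^2\le\zeta_R$), though this is harmless because the positivity of the denominator follows either from $\widehat E_R,E_R\to L$ with $SL<1$ or from Cauchy--Schwarz $E_R^2\le\widehat E_R L$; and the claim $Q_R\to-(M-1)L$ amounts to $\int_0^\infty r^{M-1}(\zeta_R'\eta_1)^2\,dr\to0$, which requires the standard derivative bounds on $\zeta_R$ that you have not stated (the paper makes them explicit in \eqref{Phi2}--\eqref{Phi3}).
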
 
		\begin{proof}
By virtue of Lemma \ref{autofunz-limite} and Remark \ref{remark-dim} it is enough to show that 
	\[\lim\limits_{p\to p_M} \nu_m(p)= -(M-1).\] 
	Moreover, thanks to \eqref{num>k-general-H}, it suffices to check that 
\[\limsup\limits_{p\to p_M} \nu_m(p) \le -(M-1).\]
We therefore chose a sequence $p_n\to p_M$ such that $\nu_m(p_n)\to \limsup\limits_{p\to p_M} \nu_m(p)$. Possibly passing to an extracted sequence, we may assume w.l.g.~that $ \widetilde \psi^i_{j,p_n}\to A^i_j \eta_1$ as $i=0,\dots m-1$ and $j=1,\dots m-1$, in force of Proposition \ref{prop:limit-nu-j<m}. To not make notation even heavier, in the following we shall write $p$, meaning $p_n$.

Now the claim follows by producing, for every $\e>0$, a family of nontrivial test functions $\psi_p  \in H^1_{0,M}$,  $\psi_p \underline{\perp}_{M} \{\psi_{1,p},\dots,\psi_{m-1,p}\}$, such that
\begin{equation}\label{test-lim} \begin{split}
\limsup\limits_{p\to p_M} {\mathcal R}_p(\psi_p) \le -(M-1) +\e  , \\ {\mathcal R}_p(\psi) :=
\frac{ \int_0^1r^{M-1}(\psi')^2-r^{M-3}f_p(r) \psi^2 dr }{\int_0^1 r^{M-3}\psi^2 dr },
\end{split}\end{equation}
and recalling  the variational characterization \eqref{nu-var}. 

Let us consider the index $k$ in Corollary \ref{rem-index-k} and define 
\[\psi_p(r):= (\eta_1\Phi)\big( r \widetilde{ \mathcal M}_{k,p} \big) +\sum_{j=1}^{m-1} a_{j,p} \psi_{j,p}(r) ,\]
where $\Phi\in C^{\infty}_0(0,\infty)$ is a cut-off function  with   
\begin{align}
\label{Phi1}
0\le \Phi(r)\le 1 , \; \mbox{ for every  } r\in[0,\infty) , \\
\label{Phi2}
\Phi(r)  = \begin{cases} 0 & \mbox{ if $r\in [0,(2R)^{-1}]$ or $[2R,\infty)$}, \\
1 & \mbox{ if } r\in [R^{-1}, R] ,
\end{cases}
\\
\label{Phi3}
|\Phi'(r)|  \le   \begin{cases}  2 R & \mbox{ if } r\in [(2R)^{-1} , R^{-1}], \\
2 R^{-1} & \mbox{ if } r\in [R , 2R] 
\end{cases}
\end{align} 
and $\eta_1$ as defined in \eqref{eta}.
Here $R$ is a parameter to be suitably chosen, depending on $\e$.
Since we will send $p\to p_{M}$, thanks to  \eqref{hp-a} and \eqref{hp-c} we may take w.l.g.~that 
	\begin{equation}\label{rosso}
t_{k,p} \widetilde{ \mathcal M}_{k,p} < (2R)^{-1} < 2R < 	t_{k+1,p} \widetilde{ \mathcal M}_{k,p} \le   \widetilde{ \mathcal M}_{k,p}.
	\end{equation} 
The coefficients $a_{j,p}$, instead, are chosen in such a way to ensure that $\psi_p\underline{\perp}_{ M}\{\psi_{1,p},\dots,\psi_{m-1,p}\}$ for every $p$, namely
\begin{align*}
a_{j,p} & =- \int_0^1 r^{M-3} \psi_{j,p}(r) (\eta_1\Phi)\big( r \widetilde{ \mathcal M}_{k,p} \big)dr .
\intertext{By \eqref{Phi2} and \eqref{rosso} we have}
a_{j,p} & =-{\int_{t_{k,p}}^{t_{k+1,p}} }r^{M-3} \psi_{j,p}(r) (\eta_1\Phi)\big( r \widetilde{ \mathcal M}_{k,p} \big)dr ,
\intertext{so performing the change of variables $t= r \widetilde{ \mathcal M}_{k,p} $  and recalling the definition of $\widetilde\psi_{j,p}^k$ in \eqref{rescaled-eigenf} one gets} 
a_{j,p}& = - (\widetilde{ \mathcal M}_{k,p})^{-\frac{M-2}{2}} \int _{t_{k,p}\widetilde{ \mathcal M}_{k,p} }^{ t_{k+1,p}\widetilde{ \mathcal M}_{k,p}}  t^{M-3}\widetilde\psi_{j,p}^k\eta_1\Phi dt = (\widetilde{ \mathcal M}_{k,p})^{-\frac{M-2}{2}} \widetilde a_{j,p}
\intertext{ for }
\widetilde a_{j,p} & = - \int_0^{+\infty} t^{M-3}\widetilde\psi_{j,p}^k\eta_1\Phi dt
\end{align*}

Obtaining \eqref{test-lim} will request many computations, that we split in several claims.

Claim 1:
\begin{equation}\label{claim1}  \begin{split}
{\mathcal D}(p) & := \int_0^1 r^{M-3}\psi_p^2 dr \\
& =  (\widetilde{ \mathcal M}_{k,p})^{2-M} \left[  \int_0^{\infty} t^{M-3}\left(\eta_1\Phi\right)^2 (t) dt - \sum\limits_{j=1}^{m-1} (\widetilde a_{j,p})^2 \right].\end{split}\end{equation}

It suffices to compute
\begin{align*}
{\mathcal D}(p) & = \int_0^1 r^{M-3}\left(\eta_1\Phi\right)^2 \big( r \widetilde{ \mathcal M}_{k,p}\big)  dr
+ \sum\limits_{j,k=1}^{m-1} a_{j,p}a_{k,p} \int_0^1 r^{M-3}\psi_{j,p}\psi_{k,p} dr \\
& + 2 \sum\limits_{j=1}^{m-1} a_{j,p}\int_0^1 r^{M-3}\psi_{j,p} \left(\eta_1\Phi\right)\big( r \widetilde{ \mathcal M}_{k,p}\big) dr ,
\end{align*}
where performing the change of variables $t= r \widetilde{ \mathcal M}_{k,p}$ in the first integral and taking advantage from \eqref{Phi2} and \eqref{rosso}  
we have
\begin{align*} \int_0^1 r^{M-3}\left(\eta_1\Phi\right)^2 \big( r \widetilde{ \mathcal M}_{k,p}\big)  dr= & (\widetilde{ \mathcal M}_{k,p})^{2-M} {\int_0^{\widetilde{ \mathcal M}_{k,p}}} t^{M-3}\left(\eta_1\Phi\right)^2 (t) dt
\\
= &  (\widetilde{ \mathcal M}_{k,p})^{2-M} \int_0^{\infty} t^{M-3}\left(\eta_1\Phi\right)^2 (t) dt  .
\end{align*}
Next using \eqref{normalization} and the definition of $a_{j,p}$, $\widetilde a_{j,p}$ in the second and third integrals gives \eqref{claim1}.
Further Claim 2:
\begin{equation}\label{claim2}
\begin{split}{\mathcal N}_1(p)  & :=\int_0^1 r^{M-3} f_p\psi_p^2 dr = (\widetilde{ \mathcal M}_{k,p})^{2-M} \int_0^{\infty} t^{M-3} \widetilde f_{k,p}(t) \left(\eta_1\Phi\right)^2 (t) dt\\
&  + \sum\limits_{j,k=1}^{m-1} a_{j,p}a_{k,p} \int_0^1 r^{M-3} f_p\psi_{j,p}\psi_{k,p} dr   \\
& 
 + 2 \sum\limits_{j=1}^{m-1} a_{j,p}\int_0^1 r^{M-3} f_p(r) \psi_{j,p}(r) \left(\eta_1\Phi\right)\big( r \widetilde{ \mathcal M}_{k,p}\big) dr 
 \end{split}
 \end{equation}
where $\widetilde f_{k,p}$ is as defined in \eqref{tilde-f_p}. 
Indeed it suffices to write explicitly 
\begin{align*}
{\mathcal N}_1(p) & = \int_0^1 r^{M-3}f_p(r) \left(\eta_1\Phi\right)^2 \big( r \widetilde{ \mathcal M}_{k,p}\big)  dr
+ \sum\limits_{j,k=1}^{m-1} a_{j,p}a_{k,p} \int_0^1 r^{M-3} f_p\psi_{j,p}\psi_{k,p} dr \\
& + 2 \sum\limits_{j=1}^{m-1} a_{j,p}\int_0^1 r^{M-3} f_p (r) \psi_{j,p}(r) \left(\eta_1\Phi\right)\big( r \widetilde{ \mathcal M}_{k,p}\big) dr ,
\end{align*}
perform the change of variables $t= r \widetilde{ \mathcal M}_{k,p}$  and taking again advantage from \eqref{Phi2} and \eqref{rosso} in the first integral.

Besides, Claim 3:
\begin{equation}
\label{claim3}\begin{split}
{\mathcal N}_2(p)  & :=\int_0^1 r^{M-1}(\psi')^2 dr = (\widetilde{ \mathcal M}_{k,p})^{2-M} \left[
-(M-1) \int_0^{\infty} t^{M-3} (\eta_1\Phi)^2 dt \right.\\
&  \left. + \int_0^{\infty} t^{M-1}  W (\eta_1\Phi)^2 dt  + \int_0^{\infty} t^{M-1} (\eta_1\Phi')^2 dt  - \sum\limits_{j=1}^{m-1} \nu_j(p) (\widetilde a_{j,p})^2  \right] \\
& +  \sum\limits_{j,k=1}^{m-1} a_{j,p}a_{k,p} \int_0^1 r^{M-3} f_p \psi_{j,p} \psi_{k,p} dr \\
& + 2 \sum\limits_{j=1}^{m-1} a_{j,p}\int_0^1 r^{M-3} f_p \psi_{j,p} \, (\eta_1\Phi)\big( r \widetilde{ \mathcal M}_{k,p}\big) dr .
\end{split}\end{equation}

By definition 
\begin{align*}
{\mathcal N}_2(p) & = \int_0^1 r^{M-1}\left(\left((\eta_1\Phi)\big( r \widetilde{ \mathcal M}_{k,p}\big)\right)' \right)^2  dr
+ \sum\limits_{j,k=1}^{m-1} a_{j,p}a_{k,p} \int_0^1 r^{M-1}\psi'_{j,p}\psi'_{k,p} dr \\
& + 2 \sum\limits_{j=1}^{m-1} a_{j,p}\int_0^1 r^{M-1}\psi'_{j,p} \left((\eta_1\Phi)\big( r \widetilde{ \mathcal M}_{k,p}\big)\right)' dr 
\end{align*}
As for the first term, we have 
\begin{align*}
\int_0^1 r^{M-1}\left(\left((\eta_1\Phi)\big( r \widetilde{ \mathcal M}_{k,p}\big)\right)' \right)^2  dr  = (\widetilde{ \mathcal M}_{k,p})^2 \int_0^1 r^{M-1}\left((\eta_1\Phi)'\big( r \widetilde{ \mathcal M}_{k,p}\big) \right)^2  dr \\
 =  (\widetilde{ \mathcal M}_{k,p})^{2-M} \int_0^{\infty} t^{M-1} \left((\eta_1\Phi)'\right)^2 dt
\intertext{ after performing the change of variables $t= r \widetilde{ \mathcal M}_{k,p}$ and recalling \eqref{Phi2},  \eqref{rosso}. \newline
Next we decompose $\left((\eta_1\Phi)'\right)^2 = \eta_1' \left(\eta_1\Phi^2\right)' + (\eta_1 \Phi')^2$, so that}
 =   (\widetilde{ \mathcal M}_{k,p})^{2-M} \left( \int_0^{\infty} t^{M-1}  \eta_1' \left(\eta_1\Phi^2\right)' dt + \int_0^{\infty} t^{M-1} (\eta_1\Phi')^2 dt \right)
\intertext{and remembering that $\eta_1$ is the first eigenfunction for \eqref{eq:finale} and solves \eqref{eq:finale-weak} with $\beta_1=-(M-1)$, we have}
 = (\widetilde{ \mathcal M}_{k,p})^{2-M} \left( -(M-1) \int_0^{\infty} t^{M-3} (\eta_1\Phi)^2 dt + \int_0^{\infty} t^{M-1}  W (\eta_1\Phi)^2 dt  \right. \\
 \left.  + \int_0^{\infty} t^{M-1} (\eta_1\Phi')^2 dt \right).
\end{align*}
Next  \eqref{radial-general-weak-H-no-c} yields
\begin{align*}
\int_0^1 r^{M-1}\psi'_{j,p}\psi'_{k,p} dr & =
\int_0^1 r^{M-3} f_p \psi_{j,p}\psi_{k,p} dr + \nu_j(p)\delta_{jk} 
\end{align*}
thanks to \eqref{normalization}.
Concerning the last term, equation   \eqref{radial-general-weak-H-no-c} again gives
\begin{align*}
\int_0^1 r^{M-1}\psi'_{j,p} \left((\eta_1\Phi)\big( r \widetilde{ \mathcal M}_{k,p}\big)\right)' dr 
& =
\int_0^1 r^{M-3} f_p \psi_{j,p}(r)(\eta_1\Phi)\big( r \widetilde{ \mathcal M}_{k,p}\big) dr \\
& + \nu_j(p) \int_0^1 r^{M-3} \psi_{j,p}(r) \, (\eta_1\Phi)\big( r \widetilde{ \mathcal M}_{k,p}\big) dr \\
& = \int_0^1 r^{M-3} f_p \psi_{j,p}(r)(\eta_1\Phi)\big( r \widetilde{ \mathcal M}_{k,p}\big) dr - \nu_j(p) a_{j,p}
\end{align*}
So the claim follows after summing up the three terms.

Summing up, \eqref{claim1}, \eqref{claim2} and \eqref{claim3} give 
\begin{align*}
{\mathcal R}_p(\psi_p) &  = \frac{{\mathcal N}_2(p) -{\mathcal N}_1(p) }{{\mathcal D}(p)} = -(M-1) +\frac{{\mathcal A}_p(\Phi)}{{\mathcal B}_p(\Phi)} 
\intertext{where }
{\mathcal A}_p(\Phi)& = \int_0^{\infty} t^{M-3}(t^2W-\widetilde f_{k,p})(\eta_1\Phi)^2 dt + \int_0^{\infty} t^{M-1}(\eta\Phi')^2 dt \\
& -\sum\limits_{j=1}^{m-1} (\nu_j(p)+M-1) (\widetilde a_{j,p})^2  \\
{\mathcal B}_p(\Phi)& =	\int_0^{\infty} t^{M-3} (\eta_1\Phi)^2 dt - \sum\limits_{j=1}^{m-1} (\widetilde a_{j,p})^2
\end{align*}
But when  $p\to p_M$, then $\widetilde f_{k,p} \to F = t^2W$ uniformly on $[R^{-1}, R]$ by Lemma \ref{lem:tilde-f_p}, so that 
\[ \int_0^{\infty} t^{M-3} \left(t^2W- \widetilde f_{k,p}\right) \left(\eta_1\Phi\right)^2 dt \to 0 . \]
Besides Proposition \ref{prop:limit-nu-j<m} assures that $\nu_j(p)+M-1 \to 0$ and that
\begin{align*}
\widetilde a_{j,p} & = - \int_0^{+\infty} t^{M-3}\widetilde\psi_{j,p}^{k}\eta_1\Phi dt  \to - A^{k}_j \int_0^{+\infty} t^{M-3}\eta_1^2\Phi dt 
\end{align*}
as $p\to p_M$. 
Therefore
\begin{align*}
\lim\limits_{p\to p_M} {\mathcal R}_p(\psi_p) & = -(M-1) \\
& + \dfrac{\int_0^{\infty} t^{M-1}(\eta_1\Phi')^2 dt }{\int_0^{\infty} t^{M-3} (\eta_1\Phi)^2 dt - \left( \int_0^{+\infty} t^{M-3}\eta_1^2\Phi dt \right)^2\sum\limits_{j=1}^{m-1} (A^{k}_j)^2 } \end{align*}

We conclude the proof by showing that for every $\e>0$ it is possible to chose $R$ and the  cut-off function $\Phi$ satisfying \eqref{Phi1}--\eqref{Phi3} in such a way that  
\[ \dfrac{\int_0^{\infty} t^{M-1}(\eta_1\Phi')^2 dt }{\int_0^{\infty} t^{M-3} (\eta_1\Phi)^2 dt - \left( \int_0^{+\infty} t^{M-3}\eta_1^2\Phi dt \right)^2\sum\limits_{j=1}^{m-1} (A^{k}_j)^2 } < \e .\]

To begin with 
\begin{align*}
\int_0^{\infty} t^{M-1}(\eta_1\Phi')^2 dt & = \int_{\frac{1}{2R}}^{\frac{1}{R}}t^{M-1}(\eta_1\Phi')^2 dt + \int_{R}^{2R}t^{M-1}(\eta\Phi')^2 dt 
\\
& \underset{\eqref{Phi3}}{\le } C R^2 \int_{\frac{1}{2R}}^{\frac{1}{R}}t^{M-1}\eta_1^2 dt + \frac{C}{R^2}\int_{R}^{2R}t^{M-1}\eta_1^2 dt
\intertext{and since $\eta_1$ has a unique maximum point in $\bar t\in (0,+\infty)$, if $R >\max\{\bar t, 1/\bar t\}$ we have} 
& \le  C R^2 \left(\eta_1\left(\frac{1}{R}\right)\right)^2\int_{\frac{1}{2R}}^{\frac{1}{R}}t^{M-1}dt + \frac{C}{R^2}\left(\eta_1(2R)\right)^2\int_{R}^{2R}t^{M-1} dt
\\
& =  \frac{C^2(1-2^{-M})}{M R^M\left(1+\frac{1}{M(M-2)R^2}\right)^{M}} + \frac{C^2(2^{M}-1) R^M}{M\left(1+\frac{R^2}{M(M-2)}\right)^{M}} =o(1)
\end{align*}
as $R\to \infty$.
Next it is clear that 
\[ \int_0^{\infty} t^{M-3} (\eta_1\Phi)^2 dt \to \int_0^{\infty} t^{M-3} \eta_1^2 dt > 0 \]
as $R\to \infty$, because 
\begin{align*}
0 \underset{\eqref{Phi1}}{\le} &  \int_0^{\infty} t^{M-3} \eta_1^2 dt - \int_0^{\infty} t^{M-3} (\eta_1\Phi)^2 dt \\
\underset{\eqref{Phi2}}{=}  & 
\int_0^{\frac{1}{R}} t^{M-3} \eta_1^2 (1-\Phi^2) dt + \int_{R}^{\infty} t^{M-3} \eta_1^2 (1-\Phi^2) dt \\
\underset{\eqref{Phi1}}{\le} &  \int_0^{\frac{1}{R}} t^{M-3} \eta_1^2 dt + \int_{R}^{\infty} t^{M-3} \eta_1^2 dt = o(1)
\end{align*}
 since $\int_0^{\infty} t^{M-3} \eta_1^2 dt<\infty$.
Similarly
\[ \int_0^{\infty} t^{M-3} \eta_1^2\Phi dt \to \int_0^{\infty} t^{M-3} \eta_1^2 dt > 0 .\]
Eventually
\[ \begin{split} \int_0^{\infty} t^{M-3} (\eta_1\Phi)^2 dt - \left( \int_0^{+\infty} t^{M-3}\eta_1^2\Phi dt \right)^2\sum\limits_{j=1}^{m-1} (A^{k}_j)^2 
 \\ \longrightarrow
\int_0^{\infty} t^{M-3} \eta_1^2 dt -  \left(\int_0^{\infty} t^{M-3} \eta_1^2 dt)\right)^2 \sum\limits_{j=1}^{m-1} (A^{k}_j)^2 \neq 0\end{split}\]
by Corollary \ref{rem-index-k}, which ends the proof.
\end{proof}

We are now in position to prove Theorem \ref{teo:morse-index}.
\begin{proof}
Propositions \ref{prop:limit-nu-j<m} and \ref{prop:num-lim} prove that each generalized radial singular negative eigenvalue $\widehat \nu_i(p)\to -(M-1)$ as $p\to p_M$ for $i=1,\dots,m$. 
Inserting these asymptotic values into \eqref{tag-2-H} gives that $J_i(p)\to 1+\frac {\a}2$ as $p\to p_\a=p_M$ for $j=1,\dots,m$. In particular from \eqref{nl<k-general-H} and \eqref{num>k-general-H} we have $J_i(p)\nearrow 1+\frac {\a}2$ for $j=1,\dots,m-1$ while $J_m(p)\searrow 1+\frac {\a}2$.
Then, when $\a$ is not an even integer all the eigenvalues $\widehat \nu_i(p)$ gives the same contribution to the Morse index giving \eqref{morse-index-p-alpha}. When $\a$ is an even integer instead in the sum in \eqref{tag-2-H} we have to add the contribution of all the $m$ eigenvalues for $j\leq \frac \a 2$ and the contribution of only $ m-1$ eigenvalues for $j=1+\frac \a 2$, which gives \eqref{morse-index-p-alpha-even}.
\end{proof}

\section{Nondegeneracy and small perturbations}\label{se:4}

In this section we address to the nondegeneracy of radial solutions to \eqref{H} when $p$ approaches $p_\a$ and we prove 
Theorem \ref{teo:nondegeneracy} and its consequence Theorem \ref{teo:esistenza}. We recall that a solution $u$ to \eqref{H} is said nondegenerate if the linearized operator at $u$, $L_u$, does not admit zero as an eigenvalue in $H^1_0(B)$, and hence if the linearized equation at $u$, namely
\begin{equation} \label{eq:linearized}
\left\{\begin{array}{ll}
-\Delta \psi = p|x|^{\alpha}|u|^{p-1} \psi \qquad & \text{ in } B, \\
\psi= 0 & \text{ on } \partial B,
\end{array} \right.
\end{equation}
does not admit any nontrivial solution in $H^1_0(B)$. Degeneracy can be computed by analyzing the singular Sturm-Liouville eigenvalue problem related to the transformed function $v_p$ introduced in \eqref{transformation-henon-no-c} as in the previous section. Indeed 
degeneracy of radial solutions to \eqref{H} has been characterized in \cite{AG-sez2} using the singular negative radial eigenvalues $\widehat \nu_k(p)$, defined in \eqref{nu-var}, for $k=1,\dots,m$.  Putting together Theorems 1.6 and 1.7 in \cite{AG-sez2} we obtain
\begin{proposition}\label{prop:charact-nondegeneracy}
Let $\a\geq 0$ and $p\in (1,p_\a)$. 
A radial solution $u_p$ to \eqref{H} with $m$ nodal zones is radially nondegenerate and it is degenerate if and only 
\[
\widehat \nu_k(p)=-\left(\frac2{2+\a}\right)^2 j(N-2+j)
\]
for some $k=1,\dots,m$ and for some $j\geq 1$.
\end{proposition}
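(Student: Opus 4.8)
The statement is a translation of Theorems 1.6 and 1.7 of \cite{AG-sez2} into the language of the transformed eigenvalues $\widehat\nu_k(p)$ introduced in \eqref{nu-var}, so the plan is to recall those two results and then follow the chain of identities already assembled at the beginning of Section \ref{se:3}. First I would invoke Theorem 1.7 of \cite{AG-sez2}: every radial solution to \eqref{H} is radially nondegenerate for all $p\in(1,p_\a)$ and all $\a\geq 0$. This yields the first half of the assertion, and, what matters for the rest, it tells us that no radial singular eigenvalue $\widehat\L_k^{\rad}(p)$ vanishes; equivalently, by \eqref{relazione-autov-no-c}, that $\widehat\nu_k(p)\neq 0$ for every $k$.

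Next, for the degeneracy part, by definition $u_p$ is degenerate precisely when the linearized equation \eqref{eq:linearized} admits a nontrivial solution in $H^1_0(B)$, i.e. when $0$ is an eigenvalue $\L_i(p)$ of \eqref{eigenvalue-problem}. Since $p|x|^\a|u_p|^{p-1}\in L^{\infty}(B)$, the equivalence recalled after \eqref{singular-eigenvalue-problem} (Propositions 3.4 and 4.1 of \cite{AG-sez2}) applies, and because $0<\left(\frac{N-2}{2}\right)^2$ the Hardy threshold is not an obstruction; hence $u_p$ is degenerate if and only if $0$ is a singular eigenvalue $\widehat\L_i(p)$ of \eqref{singular-eigenvalue-problem} in $H^1_0(B)$. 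Using the decomposition \eqref{decomposition}, $\widehat\L_i(p)=\widehat\L_k^{\rad}(p)+\l_j$ with $\l_j=j(N-2+j)$, degeneracy is therefore equivalent to $\widehat\L_k^{\rad}(p)=-j(N-2+j)$ for some $k\geq 1$ and some $j\geq 0$. The case $j=0$ forces $\widehat\L_k^{\rad}(p)=0$, ruled out by radial nondegeneracy, so $j\geq 1$; since then $\l_j>0$ we need $\widehat\L_k^{\rad}(p)<0$, which by \eqref{nu_i-neg} and \eqref{relazione-autov-no-c} is possible only for $k\in\{1,\dots,m\}$. Substituting $\widehat\L_k^{\rad}(p)=\left(\frac{2+\a}{2}\right)^2\widehat\nu_k(p)$ from \eqref{relazione-autov-no-c} into $\widehat\L_k^{\rad}(p)=-j(N-2+j)$ and solving for $\widehat\nu_k(p)$ gives $\widehat\nu_k(p)=-\left(\frac{2}{2+\a}\right)^2 j(N-2+j)$, which is exactly the stated condition; conversely, if this identity holds for some $k\in\{1,\dots,m\}$ and $j\geq 1$, then $\widehat\L_i(p)=\widehat\L_k^{\rad}(p)+\l_j=0$ for the corresponding index $i$, producing a nontrivial element of the kernel of $L_{u_p}$ in $H^1_0(B)$, i.e. degeneracy.

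Since both genuinely nontrivial inputs — the radial nondegeneracy (Theorem 1.7 of \cite{AG-sez2}) and the bookkeeping relating the Dirichlet spectrum of $L_{u_p}$, the singular spectrum of \eqref{singular-eigenvalue-problem}, and its spherical-harmonics decomposition \eqref{decomposition} (Theorem 1.6 and Propositions 3.4, 4.1 of \cite{AG-sez2}) — are imported wholesale, there is no substantial obstacle in this proof. The only points requiring a line of care are checking that the eigenvalue $0$ lies strictly below the Hardy constant $\left(\frac{N-2}{2}\right)^2$ so that passing to the singular formulation is legitimate, and tracking the sign constraints that pin down $j\geq 1$ and $1\leq k\leq m$.
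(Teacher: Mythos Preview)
Your proposal is correct and takes essentially the same approach as the paper, which simply states that the proposition is obtained by ``putting together Theorems 1.6 and 1.7 in \cite{AG-sez2}''. You have spelled out how these two inputs combine via the decomposition \eqref{decomposition} and the relation \eqref{relazione-autov-no-c}, together with the observation that radial nondegeneracy rules out $j=0$ and that \eqref{nu_i-neg} restricts $k$ to $\{1,\dots,m\}$, which is exactly the intended derivation.
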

Therefore the asymptotic nondegeneracy of $u_p$ as $p\to p_\a$ can be deduced, via the transformation \eqref{transformation-henon-no-c}, by the asymptotic behavior of the radial singular eigenvalues $\widehat \nu_k(p)$ as $p\to p_M$. Indeed by the analysis performed in Section \ref{se:3} we have: 

\begin{proof}[Proof of Theorem \ref{teo:nondegeneracy}]
	
	Let us denote by $g(s)$ the decreasing function
	\[ g(s):= -s(N-2+s) , \qquad s\ge 0 . \]
	By Proposition \ref{prop:charact-nondegeneracy} $u_p$ is degenerate if and only if there is some $k=1,\dots, m$ such that 
	\begin{equation}\label{cond-deg} 
	\left(\frac{2+\a}{2}\right)^2 \widehat \nu_k(p)= g(j) \quad \mbox{ for some positive integer $j$.} 
	\end{equation}
	Recalling that $-(M-1)= -\frac{2}{2+\alpha} \left(N-2+\frac{2+\a}{2}\right)$ according to \eqref{M-a},  Propositions \ref{prop:limit-nu-j<m} and \ref{prop:num-lim} imply that
	\begin{equation}\label{cond-deg-lim}  \left(\frac{2+\a}{2}\right)^2 \widehat \nu_k(p) \to g\left( \frac{2+\a}{2}\right) \quad \text{ 	for every  } k=1,\dots m \end{equation}
	as $p\to p_M$.
	Therefore if $\alpha$ is not  a nonnegative even integer, it is easily seen that 
	\[ \left(\frac{2+\a}{2}\right)^2 \widehat \nu_k(p) \in \left( g\left( 2+ \left[\frac{\a}{2}\right]\right) \ , \   g\left( 1+ \left[\frac{\a}{2}\right]\right) \right) \quad \text{ 	for every  } k=1,\dots m \]
	in a left neighborhood of $p_M$, which ensures that \eqref{cond-deg} can not hold since $g$ is strictly decreasing.
	\\
	Otherwise when $\alpha = 2(j-1)$, then \eqref{cond-deg-lim} says that $\left(\frac{2+\a}{2}\right)^2 \widehat \nu_k(p)\to  g(j)$, but  \eqref{nl<k-general-H} and \eqref{num>k-general-H} imply that
	\begin{align*}
	 \left(\frac{2+\a}{2}\right)^2 \widehat \nu_k(p) < g(j)  & \quad \text{ as } k=1,\dots m-1, \\
	\left(\frac{2+\a}{2}\right)^2 \widehat \nu_m(p) > g(j), &
	\end{align*}	
	for every $p\in (1,p_M)$.
	Therefore 
	\begin{align*}
	 \left(\frac{2+\a}{2}\right)^2 \widehat \nu_k(p) \in \left( g(j+1)  , g(j) \right)& \quad \text{ as } k=1,\dots m-1, \\
	\left(\frac{2+\a}{2}\right)^2 \widehat \nu_m(p) \in \left(g(j), g(j-1)\right)&
	\end{align*}	
	in a left neighborhood of $p_M$, and the conclusion follows by the monotonicity of $g$, again.

\end{proof}

As said before the nondegeneracy of $u_p$ has important applications. 
Among them, we mention a procedure  introduced by Davila and Dupaigne in \cite{DD}  which allows to deduce existence results in domains which are perturbations of the ball. We quote also \cite{C} and \cite{AGG} for applications to the H\'enon problem and to nodal solutions annular domains, respectively.  	\\
Let $\sigma : \bar B \to \R^N$ be a smooth function and let 
\[ \Omega_t:=\{x+t\sigma(x) : x\in B\}. \] 
 We want to find solutions to
\begin{equation}\label{Omega-t}
\left\{\begin{array}{ll}
-\Delta u = |x|^{\alpha}|u|^{p-1} u \qquad & \text{ in } \Omega_t, \\
u= 0 & \text{ on } \partial \Omega_t,
\end{array} \right.
\end{equation}

For small values of $t$, the set  $\Omega_t$ is diffeomorphic to $B$ and hence there exists $\tilde \sigma:\bar \Omega_t\to \R^N$ such that $x=y+t\tilde \sigma(y)$ for every $x\in B$ and every $y\in \Omega_t$.  It was noticed in \cite{C} that   if $u(y)$ is a classical solution to \eqref{Omega-t} then $w(x)=u(y)$ is a classical solution to 
\begin{equation} \label{eq:trasformato-in-omega}
\left\{\begin{array}{ll}
-\Delta w - L_t(w) =  |x+t \sigma(x)|^{\alpha}|w|^{p-1} w \qquad & \text{ in } B, \\
w= 0 & \text{ on } \partial B,
\end{array} \right.
\end{equation}
where $L_t$ is the linear operator
\[L_t(w):=t\sum_{i,k}\partial^2_{y_iy_i}\tilde \sigma_k    \partial _{x_k} w+2t \sum_{i,k}\partial_{y_i}\tilde \sigma_k    \partial ^2_{x_ix_k} w+t^2 \sum_{i,j,k}\partial_{y_j}\tilde \sigma_i \partial_{y_j}\tilde \sigma_k \partial ^2_{x_ix_k}w\]
and $\tilde\sigma_k$ denotes the $k$-th component of $\tilde\sigma$.  Observe that $u_p$ solves \eqref{eq:trasformato-in-omega} for $t=0$.
\\
By the nondegeneracy of $u_p$ stated in Theorem \ref{teo:nondegeneracy} it is not hard to deduce the existence of nodal solutions in domains of type $\Omega_t$, i.e. to prove our last result.

\begin{proof}[Proof of Theorem \ref{teo:esistenza}]
When $\a=0$ or $\a>1$ the map 
\[  F:\R\times C^{2,\gamma}_0(\bar B) \to C^{0,\gamma}_0(\bar B) \qquad F(t,w) = -\Delta w - L_t w -  |x+t\sigma|^{\a}|w|^{p-1 }w \]
 where $C^{2,\gamma}_0(\bar B):=\{w\in C^{2,\gamma}(\bar B) : w_{|\partial B}=0\}$,
 is of class $C^1$  for $\gamma$ small enough, and clearly  $F(0,u_p)=0$, where $u_p$ is the radial solution to \eqref{H}.
 Moreover $D_wF(0,u_p)$ (the Fr\'echet derivative of $F$ with respect to $w\in C^{2,\gamma}_0(\bar B)$ computed at $(0,u_p)$) is nothing else than the linearized operator $L_{u_p}$, which is  invertible for $p> \bar p$ appearing in the statement of Theorem \ref{teo:nondegeneracy}, because its  kernel is made up by the solutions of the linearized problem \eqref{eq:linearized}.
 So the  Implicit Function Theorem applies giving a continuum  of functions $w_t\in C^{2,\gamma}_0(\bar B)$  such that $F(t, w_t)=0$. In particular $u_t(y):=w_t(x)$ is a solution of \eqref{eq:trasformato-in-omega}, it has  exactly $m$  nodal zones and its nodal curves does not intersect the boundary, at least for small $t$, thanks to the continuity of the maps $t\mapsto w_t \in C^{2,\gamma}_0(\bar B)$ and $x\to x+t\sigma (x)$.
 
\end{proof}

\section{Appendix}
In the paper \cite{Gidas} Gidas studied with a phase plane analysis the problem
\[\begin{cases}
-u''-\frac{N-1} ru'=u^{\frac{N+2}{N-2}} &  \text{ in }(0,\infty)\\
u>0
\end{cases}\]
and proved that, for $N>2$, the solutions can have the following shapes: 
\begin{align*}
a) \quad & u(r)=\left(\frac {\l\sqrt{N(N-2)}} {\l^2+r^2}\right)^{\frac {N-2}2}, \\
\intertext{where $\l$ is a positive parameter, or }
b) \quad  & u(r)=\left(\dfrac {N-2}2\right)^{\frac {N-2}2}r^{-\frac {N-2}2} ,\\
c)\quad & c_1r^{-\frac {N-2}2}\leq u(r)\leq c_2r^{-\frac {N-2}2}.
\end{align*}
 When $N$ is an integer it has later been proved that only case $a)$ and $b)$ can occur.
This analysis does not need $N$ to be an integer and indeed shows that the unique solutions to problem 
\[ \begin{cases}
-(t^{M-1}V')'=t^{M-1}V^{p_M} 
& \text{ in }t>0\\
V>0
\end{cases}\tag{\ref{equazione-limite-v}}\]
for $M>2$
are the ones in $a)$, $b)$ and $c)$ with $N$ substituted by $M$. In particular the solutions in $a)$ are the unique bounded solutions to \eqref{equazione-limite-v} for every $\l>0$. Imposing also the condition
\[ V(0)=1 \tag{\ref{eq:cond-V-in-zero}} \]
implies that $\l=\sqrt{M(M-2)}$ so that 
\[V_M(r)=\left(1+\frac{r^2}{M(M-2)} \right)^{-\frac{M-2}{2}}\]
as in \eqref{soluzione-limite-v}, is the unique bounded solution to \eqref{equazione-limite-v} that satisfies \eqref{eq:cond-V-in-zero}.

Further we observe that, due the singular behavior at the origin, the solutions $b)$ and $c)$ do not belong to the space $\mathcal{D}_M(0,\infty)$ which is embedded in $L_M^{p_M+1}(0,\infty)$ for $p_M=\frac{M+2}{M-2}$. Therefore the solutions  in $a)$, for every $\l>0$, are also  the only  solutions to \eqref{equazione-limite-v} belonging to $\mathcal{D}_M(0,\infty)$. In particular one sees that  every  solution in $\mathcal{D}_M(0,\infty)$ also belong to $C[0,\infty)$. 

Thus we can also impose the condition \eqref{eq:cond-V-in-zero} obtaining that $V_M$ is the unique $\mathcal{D}_M(0,\infty)$ solution to \eqref{equazione-limite-v} that satisfies \eqref{eq:cond-V-in-zero}. 

\

The previous discussion applies to the study of radial solutions to 
\begin{equation}\label{eq:U-general}
\begin{cases}
-\Delta U=|x|^\a U^{p_\a} & \text{ in }\R^N\\
U>0
\end{cases}
\end{equation}
where $p_\a=\frac{N+2+2\a}{N-2}$. Indeed, it has been proved in \cite{GGN} that
the transformation 
\[t=r^{\frac {2+\a}2}\]
transforms radial $D^{1,2}(\R^N)$ solutions to \eqref{eq:U-general} into $\mathcal {D}_M(0,\infty)$ solutions to \eqref{equazione-limite-v}
with $M$ as in \eqref{M-a} and $M>2$. Performing the previous change of variable into $V_M$ and recalling that $p_\a=p_M$ we get that the unique bounded solutions to \eqref{equazione-limite-v} are given by
\[U_{\a,\l}(x):=\left(\frac {\l \sqrt{(N+\a)(N-2)}}{\l^2+|x|^{2+\a}}  \right)^{\frac {N-2}{2+\a}}\]
and, imposing the condition
\begin{equation}\label{eq:condition-0-U}
U(0)=1
\end{equation}
we get that the unique radial  bounded solution to \eqref{eq:U-general} that satisfies \eqref{eq:condition-0-U}, i.e. the unique solution to \eqref{eq-U},  is 
\[
U_{\a}(x) :=  \left(1+\frac{|x|^{2+\a}}{(N+\alpha)(N-2)}\right)^{-\frac{N-2}{2+\a}} 
\]
as in \eqref{U}. Finally the relation between $D^{1,2}(\R^N)$ and $\mathcal {D}_M(0,\infty)$ also implies that $U_\a$ is the unique $D^{1,2}(\R^N)$
solution to  \eqref{eq:U-general} that satisfies \eqref{eq:condition-0-U}.

Next we look at the generalized radial singular eigenvalue problem associated with the solution $V_M$, namely
\[
-(t^{M-1}\eta')'=t^{M-1}\left(W+\frac \beta{r^2}\right)\eta  \qquad \text{ in }t>0,\\
\tag{\ref{eq:finale}}\]
where $W=\frac{M+2}{M-2}\left(1+\frac{r^2}{M(M-2)} \right)^{-2}$ has been introduced in \eqref{uniform-convergence}, and we look for solutions in $\mathcal D_M(0,\infty)$, namely solutions that satisfy
\[\int_0^{\infty} t^{M-1}\eta'\varphi'\ dt=\int_0^{\infty} t^{M-1}\left(W+\frac \beta{r^2}\right)\eta\varphi \]
for every $\varphi\in C^{\infty}_0(0,+\infty)$. 
 \\
The generalized radial singular eigenvalue problem \eqref{eq:finale} is of the same type of the previous one \eqref{radial-general-H-no-c} and indeed the eigenvalues are defined as far as $\beta<\left(\frac{M-2}2\right)^2$ and they share the same properties of the previous eigenvalues $\widehat \nu(p)$. In particular each eigenvalue is simple and the $i$-th eigenfunction admits $i$ nodal zones. Then we easily seen that 
$\beta_1=-(M-1)$ and $\beta_2=0$ with corresponding eigenfunctions 
\[
\eta_1(r)=\frac r{\big(1+\frac {r^2}{M(M-2)}\big)^{\frac M2}} , \qquad 
\eta_2(r)= \frac{1-\frac{r^2}{M(M-2)}}{\big( 1+\frac {r^2}{M(M-2)}\big)^{\frac M2}}.
\tag{\ref{eta}}\]
The fact that $\beta_2$ is simple implies that $\beta_3>0$, so that $\beta_1$ and $\beta_2$ are the unique non positive eigenvalues of \eqref{eq:finale}. See also \cite{GGN}, where the same properties have been used in the proof of Theorem 1.3.

\end{document}